\definecolor{colormy}{rgb}{0.8,0.05,0.05}
\definecolor{mycolor}{rgb}{0.25,0.99,0.25}
\tikzstyle directed=[postaction={decorate,decoration={markings,
    mark=at position #1 with {\arrow{>}}}}]
\tikzstyle rdirected=[postaction={decorate,decoration={markings,
    mark=at position #1 with {\arrow{<}}}}]
\newcommand{\Hom}{\mathrm{Hom}}
\newcommand{\Ext}{\mathrm{Ext}}
\newcommand{\Char}{\mathrm{ch}}
\newcommand{\rank}{\mathrm{rank }}
\newcommand{\coker}{\mathrm{coker}}
\newcommand{\Id}{\mathrm{Id}}
\newcommand{\Ker}{\mathrm{Ker}}
\newcommand{\im}{\mathrm{Im}}
\def\C{{\mathbb C}}
\def\Z{{\mathbb Z}}
\def\Q{{\mathbb Q}}
\def\R{{\mathbb R}}
\def\O{\mathcal O}
\theoremstyle{definition}
\newtheorem{thm}{Theorem}[section]
\newtheorem{cor}[thm]{Corollary}
\newtheorem{lem}[thm]{Lemma}
\newtheorem{prop}[thm]{Proposition}
\theoremstyle{definition}
\newtheorem{exa}[thm]{Example}
\numberwithin{equation}{section}
\declaretheorem[style=definition,name=Definition,qed=$\blacktriangle$,numberlike=thm]{defn}
\declaretheorem[style=definition,name=Remark,qed=$\blacktriangle$,numberlike=thm]{rem}
\title{BGG categories in prime characteristics}
\author{Henning Haahr Andersen}
\address{Centre for Quantum Geometry (QM), Imada,
SDU, Denmark}
\email{h.haahr.andersen@gmail.com}
\date{}							% Activate to display a given date or no date
\begin{document}

\begin{abstract}
Let $\mathfrak g$ be a simple complex Lie algebra. In this paper we study the BGG category $\mathcal O_q$ for the quantum group $U_q(\mathfrak g)$ with $q$ being a root of unity in a field $K$ of characteristic $p >0$.  We first consider the simple modules in $\mathcal O_q$ and prove a Steinberg tensor product theorem for them. This result reduces the problem of determining the corresponding irreducible characters to the same problem for a finite subset of finite dimensional simple modules. Then we investigate more closely the Verma modules in $\mathcal O_q$. Except for the special Verma module, which has highest weight $-\rho$,  they all have infinite length. Nevertheless, we show that each Verma module has a certain finite filtration  with an associated strong linkage principle. The special Verma module turns out to be both simple and projective/injective.  This leads to a family of projective modules in $\mathcal O_q$, which are also tilting modules. We prove a reciprocity law, which gives a precise relation between the corresponding family of  characters for  indecomposable tilting modules and the family of characters of simple modules with antidominant highest weights.  All these results are of particular interest when $q = 1$, and we have paid special attention to this case.

\end{abstract}

\maketitle

\section{Introduction}
The BGG-category $\mathcal O$ for a semisimple complex Lie algebra $\mathfrak g$ has been studied intensively (see e.g. \cite{Hu} and the large number of  references there) ever since it was introduced by Bernstein, Gelfand and Gelfand, \cite{BGG}. Also the corresponding version for quantum groups over $\C$ has been the subject of several papers, see e.g. \cite{AM}, \cite{EK}, \cite{Fi}, and \cite{FM}.

On the contrary rather little can be found in the literature about BGG-categories over fields of characteristic $p>0$ (the only publication in this direction which comes to mind is \cite{Fr}\footnote{S. Donkin has kindly pointed out to me in a mail of March 28, 2022 that some of the results in this paper (in particular the facts that $\Delta_q(-\rho)$ is irreducible and projective in $\mathcal O_q$ (Section 6), the results about the socles of Verma modules in Section 8.1, the existence of certain projective modules in $\mathcal O_q$, Section 8.2, and the reciprocity laws in Section 8.3) were obtained several years ago by his Ph.D-student Jonathan Dixon, see his Thesis from University of London, June 2008}). Our focus in this paper is on this case - both the modular category $\bar {\mathcal O}_p$ and the quantum case $\mathcal O_q$,  where the quantum parameter is a root of unity of odd order $\ell$ in a characteristic $p$ field. 

We prove some basic results for both these categories. Our first goal is to understand the simple modules. We prove (Section 3) a general form of Steinberg's tensor product theorem, which was first established by Steinberg, \cite{St}, in the framework of finite dimensional modular representations of a semisimple algebraic group. Then we demonstrate, how one from this result can extract the characters of all simple modules in $\bar {\mathcal O}_p$ and  $\mathcal O_q$, once the characters of all the finite dimensional simple modules are available. This last information is now determined for $\bar {\mathcal O}_p$ in terms of the socalled $p$-Kazhdan-Lusztig polynomials by the recent breakthrough in modular representation theory, \cite{AMRW}, \cite{RW} (combined with \cite{So} when $p < 2(h-1)$), whereas for $\mathcal O_q$ the same is so far only known to be the case for $p \gg 0$ (where the answer is given via the usual Kazhdan-Lusztig polynomials, cf. Proposition 3.14 below).

We then examine the Verma modules. Except for the special one - the  Verma module with highest weight $-\rho$ - they all have infinite length unlike the situation in the ordinary category $\mathcal O$ and the characteristic $0$ quantum case (see \cite{AM}). However, they all have finite $(p^r,\Delta)$-filtrations, respectively $(\ell, \Delta)$-filtrations, see Section 4,  and we show that the highest weights of the quotients in these filtrations are strongly linked to the highest weight of the Verma module in question.

We show that the special Verma module is simple (in fact the only simple Verma module), and we prove that it is also projective (Section 6). This last fact leads us to the construction of a family of indecomposable projective modules. They are projective covers of the simple modules with antidominant highest weights. We point out that this makes the simple modules in $\bar {\mathcal O}_p$ and $\mathcal O_q$ with antidominant highest weights play a special role. 

The categories $\bar {\mathcal O}_p$ and $\mathcal O_q$ contain two kinds of tilting modules. First there are the finite dimensional tilting modules which have filtrations by Weyl and dual Weyl modules, see \cite{Do} and \cite{An92}. Then there are also tilting modules of infinite dimension, namely those allowing (finite!) filtrations by Verma and dual Verma modules (Section 7).  We call these $\infty$-tilting modules, and show that for each weight $\lambda$ in the closure of the dominant chamber, there is a unique indecomposable such module with highest weight $\lambda$. We obtain (for $p \geq 2h-2$) a precise relation between the $\infty$-tilting modules and certain finite dimensional tilting modules with highest weights far inside the dominant chamber. 

We also prove (Section 8) that in fact the indecomposable $\infty$-tilting modules coincide with the projective covers of the antidominant simple modules. This leads to a reciprocity law between the number of occurrences of a Verma module in a given indecomposable $\infty$-tilting module with dominant highest weight and the multiplicity of the corresponding antidominant simple module in the $(p^r,\Delta)$-filtration, respectively the $(\ell, \Delta)$-filtration  of the Verma module. 
\vskip .3 cm 
Some 10 years ago when working with V. Mazorchuk on the characteristic $0$ quantum BGG-category resulting in the paper \cite{AW}, I promised myself one day to investigate the characteristic $p$ analogue of such categories. When P. Fiebig's recent preprint \cite{PF} resulted in some email correspondence with him, I was reminded of this promise, and this got me started on the present work.  I thank Peter for sending me his work and thus re-stimulating my interest. I am also grateful to the referee for his/her helpful comments.

\section{BGG-categories for quantum groups at roots of $1$} 

Let $K$ be a field of characteristic $p \geq 0$ and fix $q \in K^\times$. In this section we begin by recalling the definition of the quantum group $U_q$ over $K$ obtained via Lusztig's quantum divided power construction. Then we define the (integral)  $BGG$-category $\O_q$ for $U_q$ and introduce the Verma modules in $\O_q$. This leads to the standard classification of simple modules in $\O_q$. 

\subsection{The quantum group $U_q$ over $K$}
Let $\mathfrak g$ be a simple complex Lie algebra with Cartan matrix $C$ and denote by  $v$ an indeterminate. The ``generic'' quantum group $U_v = U_v(\mathfrak g) $ associated to $\mathfrak g$ is the $\Q(v)$-algebra with generators $E_i, F_i, K_i, K_i^{-1}$, $i =1, 2, \cdots , n =\rank \,  \mathfrak g$ and relations as given e.g. in \cite{Ja}, Chapter 5. This algebra has also a Hopf algebra structure,  which will carry over to the $K$-algebra $U_q$ (see \cite{APW}, Section 0)  that we now define.

Set $A = \Z[v, v^{-1}]$. Let $d \in \Z$ be non-zero. Then we have quantum numbers $[r]_d = \frac{v^{dr} - v^{-dr}}{v^d - v^{-d}} \in A$ for all $r \in \Z$. If $r>1$ we set $[r]_d!=[r]_d [r-1]_d \cdots [1]_d$.

Choose now a diagonal matrix $D$ with diagonal entries $d_1, d_2, ,\cdots , d_n \in \Z_{>0}$, which are minimal with the property that $DC$ is symmetric. Then we set 
$$ E_i^{(r)} =\frac{E_i^r}{[r]_{d_i}!} \text { and }  F_i^{(r)} =\frac{F_i^r}{[r]_{d_i}!} .$$
We now define $U_A$ to be the $A$-subalgebra of $U_v$ generated by $E_i^{(r)}, F_i^{(r)}, K_i, K_i^{-1}, \; i = 1, 2, ,\cdots , n, \; r \in \Z_{>0}$.

When $q \in K^\times$ we make $K$ into an  $A$-algebra via the homomorphism $ A \rightarrow K$ which takes $v$ to $q$. We define
$$ U_q = U_A \otimes_A K.$$
This is the quantum group (Hopf algebra) over $K$, which we shall work with. We will often abuse notation and write $E_i^{(r)}$ instead of $E_i^{(r)} \otimes 1 \in U_q$ etc. 

We have the triangular decomposition 
$$ U_q = U_q^- U_q^0 U_q^+$$ 
with $U_q^-$, respectively $U_q^+$, respectively  $U_q^0$ being the subalgebra generated by all $ F_i^{(r)}$, respectively. $E_i^{(r)}$, respectively $ K_i, K_i^{-1}, \left[\begin{smallmatrix} K_i \\ m \end{smallmatrix}\right].$ Here $\left[\begin{smallmatrix} K_i \\ m \end{smallmatrix}\right]$ is the specialization at $q$ of the element  
$\prod_{s=1}^m \frac{K_i v^{d_i(1-s)} - K_i^{-1} v^{d_i(s-1)}}{v^{d_i s} - v^{-d_i s}} \in U_A^0$.
\vskip .2 cm
We set $B_q = U_q^0 U_q^+$.

\subsection{The BGG-categories $\mathcal O_q$} \label{BGG cat}

Set $X = \Z^n$ and let $R \subset X \otimes_\Z \R$ be the root system associated with $C$. We choose a set of simple roots $S = \{\alpha_1, \alpha_2, \cdots , \alpha_n\}$ in $R$ and denote by $R^+$ the corresponding set of positive roots. The ordering on $X$ induced by $R^+$ is denoted $\leq$. By $N$ we denote the number of positive roots. The generators $E_i $ and $F_i$ above correspond to $\alpha_i$, resp. $-\alpha_i$. 

We call the elements of $X$ weights and define as usual the set of dominant weights
$$ X^+ = \{\lambda \in X \mid \langle\lambda, \alpha_i^\vee \rangle \in \Z_{\geq 0} \text { for all } i = 1, 2, \cdots , n \}.$$
Here $\beta^\vee$ is the dual root of $\beta \in R$. We let $R^\vee$ denote the dual root system.

Let $\lambda \in X$ and set $\lambda_i = \langle \lambda, \alpha_i^\vee \rangle$ . Then $ \lambda$ defines a character
$$ \chi_\lambda : U^0_q \rightarrow K,$$
which takes $K_i$ to $q^{d_i \lambda_i}$ and $\left[\begin{smallmatrix} K_i \\ m \end{smallmatrix}\right]$ to $\binom {\lambda_i}{m}$, \cite{APW}, Lemma 1.1. We extend $\chi_\lambda$ to $B_q$ by setting $\chi_\lambda (E_i^{(r)})  = 0$ for all $i= 1, 2, \cdots , n, \; r \in \Z_{>0}$.

For any $U_q^0$-module $M$ we define the $\lambda$-weight space in $M$ to be 
$$ M_ \lambda = \{m\in M \mid u m = \chi_\lambda (u) m, \; u \in U^0_q \}.$$
We say that $M$ is a weight module if 
$$ M = \bigoplus_{\lambda \in X} M_\lambda,$$
and we call $\lambda$ a weight of $M$ if $M_\lambda \neq 0$.
Now we are ready to define the $BGG$-category we are going to study.

\begin{defn}
The category $\mathcal O_q$ is the full subcategory of the category of $U_q$-modules consisting of those $M$, which satisfy
\begin{enumerate}
\item $M$ is a weight module,
\item $\dim_k (M_\lambda) < \infty$ for all $\lambda$, and there exist finitely many $\lambda_1, \lambda_2, \cdots , \lambda_r \in X$ such that for any weight $\lambda$ of $M$ we have $\lambda \leq \lambda_i$ for some $i \in \{1, 2, \cdots ,r\}$,
\item for all $m \in M$ we have $\dim_K(B_q m) < \infty.$
\end{enumerate}
\end{defn}

\begin{rem}
In \cite{Hu} (as well as in \cite{AM}) the $BGG$-category is defined to consist of finitely generated $U_K$ (or $U_q$) -modules. We have chosen the weaker finiteness condition (2), because it ensures that $\mathcal O_q$ is an abelian category for all $p$. Moreover, it also implies that the duality functor (as defined as in \cite{Hu}, Section 3.3) preserves $\mathcal O_q$. 
\end{rem}

Note that $\mathcal O_q$ consists of modules with integral weights. Moreover, our definition of weight modules implies that all modules in $\mathcal O_q$ have type $\bf 1$. 

When $M \in \mathcal O_q$, the character  $\Char M$ is given by
$$\Char M = \sum_{\lambda \in X} \dim_K(M_\lambda) e^\lambda.$$

\subsection{Verma modules and simple modules in $\mathcal O_q$} \label{Verma and simple}

Let $\lambda \in X$. Exactly as in the classical case we define the Verma module $\Delta_q(\lambda) \in \mathcal O_q$ by
$$ \Delta_q(\lambda) = U_q \otimes_{B_q} \lambda,$$
where $\lambda$ denotes the $1$-dimensional $K$-space with $B_q$-action given by $\chi_\lambda $.

Note that $\Delta_q(\lambda)_\mu \neq 0$ iff $\mu \leq \lambda$. Moreover, $\dim_K \Delta_q(\lambda)_\lambda= 1$.

As $U_q$ has a $PBW$-bases we see that the character of $\Delta_q(\lambda)$ is given by
\begin{equation} \label{Verma char}
 \Char \Delta_q(\lambda) = \sum_{\mu \leq \lambda} P(\lambda - \mu) e^\mu.
 \end{equation}
Here $P$ is the Kostant partition function. 

We set $q^- = \Char \Delta_q(-\rho) =\sum_{\mu \leq -\rho} P(-\rho - \mu) e^\mu $. Then by the above formula we get  for arbitrary $\lambda \in X$ 
$$ \Char \Delta_q(\lambda) = q^-  e^{\lambda + \rho}.$$

\begin{rem} Here we are using a notation similar to the one used in \cite{Hu}. In fact, our $q^-$ is the inverse of Humphreys $q$-function (see \cite{Hu}, Section 2.3). Note that the notation $q^-$ has nothing to do with the quantum parameter $q$.
\end{rem}

Clearly $\Delta_q(\lambda)$ has a unique maximal proper submodule, namely the sum of all submodules $M \subset \Delta_q(\lambda)$ with $M_\lambda = 0$. In other words, $\Delta_q(\lambda)$ has a unique simple quotient. We shall denote this quotient $L_q(\lambda)$. 

\begin{thm}
\begin{enumerate}
\item The set $\{L_q(\lambda) \mid \lambda \in X\}$ is up to isomorphisms a complete list of simple modules in $\mathcal O_q$.
\item $L_q(\lambda)$ is finite dimensional iff $\lambda \in X^+$.
\item Let $\lambda \in X^+$. If $q^m \neq 1$ for all $m>0$ then $\Char L_q(\lambda) = \chi (\lambda)$, where $\chi (\lambda)$ is the Weyl character.
\end{enumerate}
\end{thm}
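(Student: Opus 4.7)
The plan is to handle the three assertions in sequence using standard highest-weight theory for $U_q$.

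For (1), the first step is to verify that each $L_q(\lambda)$ lies in $\mathcal{O}_q$: as a quotient of $\Delta_q(\lambda)$ it is a weight module whose weights are all $\leq\lambda$, its weight spaces are finite dimensional by (\ref{Verma char}), and $B_q$-local finiteness on any element follows since only finitely many weights of $L_q(\lambda)$ can dominate a given one. Distinct $\lambda$ yield non-isomorphic simples, since $\lambda$ is recoverable as the unique maximal weight. Conversely, given an arbitrary simple $L\in\mathcal{O}_q$, a Zorn-type argument using axiom (2) of $\mathcal{O}_q$ (together with the Noetherian property of $\N R^+$) produces a maximal weight $\lambda$ of $L$; any nonzero $v\in L_\lambda$ must be annihilated by every $E_i^{(r)}$ with $r>0$, for otherwise $E_i^{(r)}v$ would be a nonzero vector of weight strictly greater than $\lambda$. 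Hence $v$ is a highest weight vector, inducing a nonzero map $\Delta_q(\lambda)\to L$ which, by simplicity of $L$, factors through an isomorphism $L_q(\lambda)\cong L$.

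For (2), when $\lambda\in X^+$ I would construct the quantum Weyl module $V_q(\lambda):=\Delta_q(\lambda)/\sum_i U_q\,F_i^{(\lambda_i+1)}v_\lambda$ and invoke the standard fact (\cite{Ja}, Chapter 5) that $V_q(\lambda)$ is finite dimensional, whence $L_q(\lambda)$, as a quotient, is also finite dimensional. Conversely, suppose $L_q(\lambda)$ is finite dimensional. A standard result — provable either via restriction to the $\mathfrak{sl}_2$-subalgebras generated by $E_i^{(r)},F_i^{(r)},K_i^{\pm1}$ or via Lusztig's braid group operators — ensures that the character of any finite-dimensional type-$\mathbf{1}$ $U_q$-module is $W$-invariant. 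The maximality of $\lambda$ then forces $s_i\lambda\leq\lambda$ for every simple reflection $s_i$, i.e.\ $\lambda_i\geq0$ for all $i$, so $\lambda\in X^+$.

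For (3), the main reduction is showing that under the hypothesis $q^m\neq1$ for all $m>0$, the Weyl module $V_q(\lambda)$ is already simple, so that $L_q(\lambda)=V_q(\lambda)$. Once simplicity is established, $\Char L_q(\lambda)=\chi(\lambda)$ follows by matching weight-space dimensions of $V_q(\lambda)$ with the classical Weyl module via the PBW basis. To prove simplicity at non-root-of-unity $q$, I would work with an $A$-integral form over $A=\Z[v,v^{-1}]$ and specialize: at such $q$ the quantum Shapovalov determinant remains nonzero, ruling out any new submodules in the specialization (cf.\ \cite{Ja}, Chapter 5). This simplicity step, the one place where the hypothesis on $q$ actually enters, is the main obstacle to a fully self-contained proof, although it is entirely standard.
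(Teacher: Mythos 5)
Your proposal is correct in substance, and for parts (1) and (2) it follows precisely the classical highest-weight argument (maximal weight gives a highest-weight vector, hence a surjection from $\Delta_q(\lambda)$; $W$-invariance of the character of a finite-dimensional module forces dominance of the highest weight), which is exactly what the paper invokes by citing \cite{Hu}, Sections 1.3 and 1.6. Two small citation points for (2): the finite-dimensionality of $V_q(\lambda)$ in the divided-power setting over an arbitrary $K$ is an integral-form statement going back to Lusztig and treated in \cite{APW}, not really in \cite{Ja} Chapter 5, which works with the non-divided-power $U_q$ over a field with $q$ non-root-of-unity; and for the forward direction of (2) one can bypass constructing $V_q(\lambda)$ altogether by using the finite-dimensional dual Weyl module $H^0_q(\lambda)$ from \cite{APW}, which visibly has $L_q(\lambda)$ as its socle.

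For part (3) you take a genuinely different route than the paper. The paper defers to \cite{APW}, Theorem 6.4, whose proof compares the Weyl module $H^N_q(w_0\cdot\lambda)$ and dual Weyl module $H^0_q(\lambda)$ (both of Weyl character) via the induction/cohomology formalism and shows that the natural map between them is an isomorphism when $q$ is not a root of unity. You instead argue via the contravariant form on the Weyl module and its determinant specialized from the $A$-form $V_A(\lambda)$. That argument is valid, but your terminology needs care: the ``Shapovalov determinant'' on the full Verma module $\Delta_q(\lambda)$ does vanish for $\lambda\in X^+$ (indeed $\Delta_q(\lambda)$ is infinite-dimensional while $L_q(\lambda)$ is finite-dimensional, so the Verma form always has a large radical). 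The determinant that you need to be nonzero is the one for the contravariant form on the quotient $V_q(\lambda)$, in each weight space; that determinant is, up to units in $A$, a product of cyclotomic polynomials $\Phi_d(v)$ with no integer prime factors, so it is invertible in $K$ as soon as $q$ is not a root of unity, regardless of $\operatorname{char}K$. Once stated this way the argument closes up cleanly. The trade-off between the two approaches is the usual one: the cohomological route (\cite{APW}) makes the passage to arbitrary $K$ essentially free once Kempf vanishing is in place, whereas the contravariant-form route is more elementary and self-contained but requires the determinant computation and a correct identification of which form is being used.
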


\begin{proof} (1) and (2) are proved exactly as in the classical case, \cite{Hu}, Sections 1.3 and 1.6. The proof of (3) is given in \cite{APW}, Theorem 6.4.
\end{proof}

\begin{rem} When $p = 0$ the equality in (3) also holds for $q = \pm 1$, see \cite{APW}, Remark following Theorem 6.4.
\end{rem}

\section{Steinberg's tensor product theorem in $\mathcal O_q$}

We continue to denote by $K$ an arbitrary field. In this section $q \in K$ will be a root of unity of order $\ell$. For convenience we shall assume $\ell $ is odd (to extend to even $\ell$ see \cite{An96} and \cite{An03})  and not divisible by $3$ when $R$ is of type $G_2$. When $q = 1$ and $p = 0$ the category $\mathcal O_q$ is the ordinary category $\mathcal O$. The case $\ell > 1$ and $p = 0$ was treated in \cite{AW}. So in this paper we focus on $p > 0$. When $\ell = 1$  we may identify $\mathcal O_q$ with the modular category $\bar {\mathcal O}_p$ for the hyperalgebra of $\mathfrak g$. In the following we will always assume $\ell >1$ (i.e. $q$ a non-trivial root of $1$), using special notation (bars and subscript $p$) when we are in the modular category, i.e. when $q = 1$.

\subsection{The small quantum group} \label{small q}
The small quantum group $u_q$ is the subalgebra of $U_q$ generated by $E_i, F_i, K_i, \; i=1, 2, ,\cdots , n$, cf. \cite{Lu}. Just like for $U_q$ we have a triangular decomposition  $u_q = u_q^- u_q^0 u_q^+$. As $E_i^\ell = [\ell]_{d_i}! E_i^{(\ell)} = 0$ (because by our assumption $[\ell]_{d_i} = 0$) we see that $u_q^+$ is finite dimensional. In fact, via the PBW-bases we obtain $\dim u_q^+ = \ell^N$, where $N$ is the number of positive roots, cf. the analogous modular case treated in \cite{RAG}, II.3.3. Likewise, $\dim u_q^- = \ell^N$. As $K_i^{2\ell} = 1$ we have also that  $ u_q^0 $ (and hence $u_q$) is finite dimensional. 

It will be convenient for us instead of $u_q$-modules to consider finite dimensional modules for the subalgebra $u_qB_q$ of $U_q$. Note that $u_qB_q$ has the triangular decomposition $u_qB_q \simeq u_q^- \otimes U_q^0 \otimes U_q^+$.

\subsection{Baby Verma modules and simple $u_qB_q$-modules}

Let $\lambda \in X$. Like in Section 2 we consider $\lambda$ as a $1$-dimensional $B_q$-module. 

The baby Verma module corresponding to $\lambda$ is defined by 
$$ \tilde \Delta_\ell(\lambda) = u_qB_q \otimes_{B_q} \lambda.$$
By the facts recalled in Section \ref{small q} we have $\dim \tilde \Delta_\ell(\lambda) = \ell^N $. Moreover, the fact that $u_qB_q = u_q^- \otimes B_q$ implies that weight multiplicities in baby Verma modules are given by 
\begin{equation} \label{baby char}
\dim_K \tilde \Delta_\ell(\lambda)_{\lambda - \mu} = \# \{ (n_\beta)_{\beta \in R^+} \mid 0 \leq n_\beta < \ell \text { for all } \beta \text { and } \sum_{\beta \in R^+} n_\beta \beta = \mu \}
\end{equation}
This formula is identical to its modular analogue in \cite{RAG}, Section II.9.2.

Note that for any $\mu \in X$ the character $\ell \mu$ of $B_q$ extends to a character of $u_qB_q$. In fact, $\ell \mu$ restricts to the trivial character on $u_q^0$. So $\ell \mu$ extends from $B_q$ to $u_qB_q$ by defining it to be trivial on $u_q$.

The same arguments as in Section 2.3 show that $\tilde\Delta_\ell (\lambda)$ has a unique simple quotient, and if we denote this $\tilde L_\ell (\lambda)$, then the set of simple 
modules $(\tilde L_\ell(\lambda))_{\lambda \in X}$ is up to isomorphism a full set of non-isomorphic  finite dimensional $u_qB_q$-modules (of type $\bf 1$).

By the tensor identity (or just from the definition of  $\tilde \Delta_\ell$) combined with the above observation about $\ell$-multiples of characters in $X$ we get 

\begin{equation} \label{period} \tilde \Delta_\ell(\lambda + \ell \mu) \simeq \tilde \Delta_\ell(\lambda) \otimes \ell \mu \text { and } \tilde  L_\ell(\lambda + \ell \mu) =\tilde  L_\ell(\lambda) \otimes \ell \mu \text { for all } \lambda, \mu \in X.
\end{equation}

Set now
$$X_\ell = \{ \lambda \in X^+ \mid \langle \lambda, \alpha_i^\vee\rangle < \ell \text { for all } i= 1, 2, \cdots, n\}.$$
This is the set of $\ell$-restricted weights. An important result (the quantum analogue of Curtis' theorem, \cite{Cu},  for the algebraic group over $K$ corresponding to $\mathfrak g$) says that the simple $u_q$-modules are in fact restrictions of the simple $U_q$-modules in $\mathcal O_q$ with $\ell$-restricted highest weights (see \cite{AW}, Theorem 1.9). 
\begin{thm}\label{Curtis_q} If $\lambda \in X_\ell$ then $L_q(\lambda)$ remains simple when restricted to $u_q$, i.e. we have $L_q(\lambda)_{\mid_{u_q}} \simeq L_\ell(\lambda)$.
\end{thm}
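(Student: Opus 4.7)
The strategy is the standard two-step argument used for Curtis-type theorems: first show that a highest weight vector $v_\lambda \in L_q(\lambda)$ already generates the whole module as a $u_q$-module, and then show that this $u_q$-module is simple.

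\textbf{Step 1 (generation).} Using the PBW basis of $U_q^-$ adapted to divided powers (cf.\ \cite{Lu}, \cite{APW}), I would write $U_q^- = u_q^- \cdot \widetilde U_q^-$, where $\widetilde U_q^-$ is the subalgebra generated by $\{F_\beta^{(\ell r)} \mid \beta \in R^+,\ r \geq 1\}$ (the ``Frobenius part''). Since $L_q(\lambda) = U_q^- v_\lambda$, the task reduces to showing that $\widetilde U_q^- v_\lambda \subseteq u_q^- v_\lambda$. The key input is Lusztig's quantum Frobenius homomorphism $\mathrm{Fr}\colon U_q \to \mathrm{Dist}(\mathfrak g_K)$ sending $F_\beta^{(\ell r)} \mapsto F_\beta^{(r)}$ (classically) and annihilating the augmentation ideal of $u_q$. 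Under this framework, the action of $\widetilde U_q^-$ on $v_\lambda$ is controlled by the ``Frobenius twist part'' of $\lambda$, which vanishes precisely when $\lambda$ is $\ell$-restricted. One then verifies, by explicit PBW computations using the relations at a root of unity, that each $F_\beta^{(\ell)}$ (and hence every product of such) lies in $u_q^- v_\lambda$ modulo higher terms.

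\textbf{Step 2 (simplicity).} The crucial structural fact is that (a suitable enlargement of) $u_q$ is a normal Hopf subalgebra of $U_q$. This implies that for any $U_q$-module $V$, the $u_q$-socle $\mathrm{soc}_{u_q}(V)$ is automatically a $U_q$-submodule. Applying this to $V = L_q(\lambda)$ and using $U_q$-simplicity forces $\mathrm{soc}_{u_q}(L_q(\lambda)) = L_q(\lambda)$, so $L_q(\lambda)$ is semisimple as a $u_q$-module. Combined with Step~1, which shows that $L_q(\lambda)$ is generated as a $u_q$-module by the single vector $v_\lambda$ (of weight $\lambda$, killed by $u_q^+$), a cyclic semisimple $u_q$-module is necessarily simple. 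As $v_\lambda$ has the properties characterizing $\tilde L_\ell(\lambda)$, we obtain $L_q(\lambda)|_{u_q} \simeq \tilde L_\ell(\lambda)|_{u_q}$.

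\textbf{Main obstacle.} The technically delicate part is Step~1, i.e.\ formally proving $\widetilde U_q^- v_\lambda \subseteq u_q^- v_\lambda$ for $\lambda \in X_\ell$. This requires careful bookkeeping in the $\mathcal A$-form $U_\mathcal A$ and its specialization at $q$: one must track how the divided powers $F_\beta^{(\ell r)}$ interact with the commutation relations $[E_i^{(s)}, F_i^{(r)}]$ at the highest weight vector, using crucially that each coordinate $\langle \lambda, \alpha_i^\vee \rangle < \ell$. The cleanest implementation of this is the one given in \cite{APW}, which the author invokes by citing \cite{AW}, Theorem 1.9.
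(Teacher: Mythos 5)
The paper offers no proof of this theorem --- it is cited from \cite{AW}, Theorem~1.9 --- so there is nothing in the text to compare against; I evaluate your proposal on its own terms. Step~2 is essentially correct: normality of $u_q$ in $U_q$ does make the $u_q$-socle of $L_q(\lambda)$ a $U_q$-submodule, hence all of $L_q(\lambda)$, so $L_q(\lambda)|_{u_q}$ is semisimple. But your final sentence, ``a cyclic semisimple $u_q$-module is necessarily simple,'' is false as a bald statement (a semisimple group algebra generated by $1$ is a counterexample). What is true, and what you need to spell out, is that a semisimple $u_q$-module generated by a single $u_q^+$-singular weight vector of weight $\lambda$ is simple: in a decomposition $L_q(\lambda) = \bigoplus_i M_i$ each projection of $v_\lambda$ is a nonzero $u_q^+$-singular vector of the same weight, so all $M_i$ are isomorphic and all the $v_i$ have the same annihilator, whence $\dim u_q v_\lambda = \dim M_1$ and equality $u_q v_\lambda = L_q(\lambda)$ forces a single summand.

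The genuine gap is Step~1. You claim the inclusion $\widetilde U_q^- v_\lambda \subseteq u_q^- v_\lambda$ (equivalently $L_q(\lambda) = u_q^- v_\lambda$) can be obtained ``by explicit PBW computations,'' justified by the ``Frobenius twist part of $\lambda$'' vanishing when $\lambda$ is $\ell$-restricted. That conflates a structural fact about the algebra homomorphism $F_q$ with the action of the operators $F_\beta^{(\ell r)}$ on the particular module $L_q(\lambda)$, whose structure is exactly what we do not yet know; the inclusion you want is at least as strong as the theorem itself and does not follow from a free-standing PBW computation. The usual route (and presumably that of \cite{AW}) avoids Step~1 entirely: once $u_q$-semisimplicity is obtained from normality, decompose $L_q(\lambda)$ into $u_q$-isotypic components. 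Each isotypic component is a $U_q$-submodule and each multiplicity space $\Hom_{u_q}(\tilde L_\ell(\mu), L_q(\lambda))$ is a $\bar U_K$-module via $F_q$; $U_q$-simplicity of $L_q(\lambda)$ forces a single nonzero isotypic component with simple multiplicity space $V$, and comparing the highest weight $\lambda$ with $\mu + \ell\cdot(\text{highest weight of }V)$ together with $\lambda, \mu \in X_\ell$ yields $\mu = \lambda$ and $V$ trivial.
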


An immediate consequence of this theorem and the second part of (\ref{period}) is
\begin{cor}
Let $\lambda \in X$ and write $\lambda = \lambda^0 + \ell \lambda^1$ for (unique) $ \lambda^0 \in X_\ell$ and $\lambda^1 \in X$. Then 
$$ \tilde L_\ell(\lambda) \simeq L_q(\lambda^0)_{\mid_{u_qB_q}} \otimes \ell \lambda^1.$$
\end{cor}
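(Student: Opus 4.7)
The plan is to reduce the general statement to the $\ell$-restricted case $\lambda = \lambda^0$ and then apply Theorem~\ref{Curtis_q}. First I would invoke the second identity in \eqref{period} to get
\[ \tilde L_\ell(\lambda) \;=\; \tilde L_\ell(\lambda^0 + \ell\lambda^1) \;\simeq\; \tilde L_\ell(\lambda^0) \otimes \ell\lambda^1. \]
So it suffices to prove the isomorphism $\tilde L_\ell(\lambda^0) \simeq L_q(\lambda^0)_{\mid_{u_qB_q}}$ for $\lambda^0 \in X_\ell$, and tensor with $\ell\lambda^1$.

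Next I would argue that $L_q(\lambda^0)_{\mid_{u_qB_q}}$ is already a simple $u_qB_q$-module. By Theorem~\ref{Curtis_q} the further restriction $L_q(\lambda^0)_{\mid_{u_q}}$ is isomorphic to $L_\ell(\lambda^0)$ and hence is simple over $u_q$. Any nonzero $u_qB_q$-submodule of $L_q(\lambda^0)$ is in particular a nonzero $u_q$-submodule, so by $u_q$-simplicity it is the whole module; this gives $u_qB_q$-simplicity.

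Then I would identify this simple module with $\tilde L_\ell(\lambda^0)$. The highest weight line $L_q(\lambda^0)_{\lambda^0}$ is one-dimensional, is annihilated by every $E_i^{(r)}$ (in particular by $u_q^+$), and carries the character $\chi_{\lambda^0}$ of $B_q$. By the universal property of $\tilde\Delta_\ell(\lambda^0) = u_qB_q \otimes_{B_q} \lambda^0$, a choice of highest-weight vector yields a nonzero $u_qB_q$-homomorphism $\tilde\Delta_\ell(\lambda^0) \to L_q(\lambda^0)_{\mid_{u_qB_q}}$. This map factors through the unique simple quotient $\tilde L_\ell(\lambda^0)$, and since its target is simple and the map is nonzero, it becomes an isomorphism $\tilde L_\ell(\lambda^0) \simeq L_q(\lambda^0)_{\mid_{u_qB_q}}$.

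The substantive content of the argument is already packaged into Theorem~\ref{Curtis_q}, so I do not expect a real obstacle here: the remaining steps are essentially a bookkeeping exercise that uses the universal property of the baby Verma module together with the compatibility of the highest-weight structures. The one point deserving a line of justification is the passage from $u_q$-simplicity to $u_qB_q$-simplicity, and this is immediate from the inclusion $u_q \subset u_qB_q$.
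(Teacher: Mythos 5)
Your proof is correct and follows the same route the paper intends: the paper derives the corollary as an immediate consequence of Theorem~\ref{Curtis_q} together with the second identity in (\ref{period}), and your argument simply fills in the routine details (the passage from $u_q$-simplicity to $u_qB_q$-simplicity and the identification of $L_q(\lambda^0)_{\mid_{u_qB_q}}$ with $\tilde L_\ell(\lambda^0)$ via the highest-weight line and the universal property of $\tilde\Delta_\ell(\lambda^0)$). No gaps.
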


\subsection{The Steinberg module for $U_q$}

The Steinberg module for $U_q$ is
$$ St_q = L_q((\ell - 1)\rho),$$
where $\rho$ as usual denotes half the sum of the positive roots in $R$. By the strong linkage principle for quantum groups at roots of $1$, \cite{An03}, Corollary 4.5, we have also $St_q = H^0_q((\ell -1)\rho)$. Here  $H^0_q((\ell -1)\rho)$ is the dual Weyl module with highest weight $(\ell -1)\rho$. Its character is $\chi((\ell -1)\rho)$, and it has dimension $\ell^N$.  We conclude that when we restrict to $u_qB_q$ we have 
$$ St_q = \tilde \Delta_q((\ell -1)\rho) = \tilde L_q((\ell -1)\rho).$$

Later we shall need the following result.

\begin{prop} \label{irr baby Verma} Let $\lambda \in X_\ell$. Then 
$$ L_\ell(\lambda) = \Delta_\ell(\lambda) \text { iff } \lambda = (l-1)\rho.$$
\end{prop}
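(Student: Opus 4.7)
The easy direction $(\Leftarrow)$ is essentially the display just before the proposition: $St_q = \tilde\Delta_q((\ell-1)\rho) = \tilde L_q((\ell-1)\rho)$, and applying Theorem \ref{Curtis_q} one reads off $L_\ell((\ell-1)\rho) = \Delta_\ell((\ell-1)\rho)$.

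For the nontrivial direction, I plan a dimension-comparison argument. Assuming $L_\ell(\lambda) = \Delta_\ell(\lambda)$, formula (\ref{baby char}) immediately gives $\dim L_\ell(\lambda) = \ell^N$. The quantum Curtis theorem (Theorem \ref{Curtis_q}) then forces $\dim L_q(\lambda) = \dim L_\ell(\lambda) = \ell^N$. Since $\lambda \in X_\ell \subset X^+$, the simple $L_q(\lambda)$ is finite-dimensional and sits as the socle of the dual Weyl module $H^0_q(\lambda) = \nabla_q(\lambda)$, whose character is $\chi(\lambda)$ (noted in the paragraph on the Steinberg module just before the proposition). Consequently $\dim \nabla_q(\lambda) = \dim V_\C(\lambda)$ by the classical Weyl dimension formula, and I obtain the bound $\ell^N = \dim L_q(\lambda) \leq \dim V_\C(\lambda)$.

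It then remains to establish the combinatorial claim that $\dim V_\C(\lambda) \leq \ell^N$ for every $\lambda \in X_\ell$, with equality iff $\lambda = (\ell-1)\rho$. Writing each positive coroot as $\alpha^\vee = \sum_j m_j \alpha_j^\vee$ with $m_j \in \Z_{\geq 0}$ and using $\lambda_j := \langle \lambda, \alpha_j^\vee\rangle \leq \ell - 1$, I get $\langle \lambda + \rho, \alpha^\vee\rangle = \sum_j m_j(\lambda_j + 1) \leq \ell \sum_j m_j = \ell \langle \rho, \alpha^\vee\rangle$ for every positive $\alpha$. Multiplying over positive roots in the Weyl dimension formula yields $\dim V_\C(\lambda) \leq \ell^N$, and equality restricted to the simple roots forces $\lambda_j = \ell - 1$ for every $j$, i.e., $\lambda = (\ell-1)\rho$. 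The main obstacle is essentially the first step: installing the bound $\dim L_q(\lambda) \leq \dim V_\C(\lambda)$ depends on having $\nabla_q(\lambda)$ with the classical Weyl character at a root of unity, a fact that the paper has already invoked for the Steinberg module just above. Once that bridge is in place, no heavier technology (Jantzen's sum formula, projectivity of $St_q$, or a direct construction of singular vectors) appears necessary.
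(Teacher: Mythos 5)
Your proposal is correct and takes essentially the same route as the paper's proof: invoke Theorem \ref{Curtis_q} to identify $L_\ell(\lambda)$ with the restriction of $L_q(\lambda)$, embed $L_q(\lambda)$ in the dual Weyl module $H^0_q(\lambda)$ whose dimension is given by Weyl's formula, and then observe that this dimension is $\leq \ell^N$ with equality only at $\lambda = (\ell-1)\rho$. The only difference is that you spell out the combinatorial inequality that the paper dismisses as ``immediate,'' which is a perfectly reasonable elaboration.
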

\begin{proof} By Theorem \ref{Curtis_q} we have $L_\ell(\lambda) = L_q(\lambda)_{\mid_{u_q}}$. Now $L_q(\lambda)$ is a submodule of the dual Weyl module $H^0_q(\lambda)$. Therefore, $\dim_K L_\ell(\lambda) \leq \dim_K H^0_q(\lambda)$. But the latter dimension is given by Weyl's dimension formula and it is immediate that this is strictly less than $\ell^N$ if $\lambda \in X_\ell \setminus \{(\ell -1)\rho\}$.
\end{proof}

\subsection{The quantum Frobenius homomorphism}

Let $\bar U = U_\Q(\mathfrak g)$ be the ordinary enveloping algebra of $\mathfrak g$ over $\Q$. We choose the standard Chevalley generators $e_i, f_i, h_i$, $i= 1, 2, \cdots , n$ for $\mathfrak g$. Via the corresponding Kostant $\Z$-form $\bar U_\Z$ in $\bar U$ we obtain the hyperalgebra $\bar U_K = \bar U_\Z \otimes_\Z K$ for $\mathfrak g$ over $K$. 
There is a quantum Frobenius homomorphism (see \cite{Lu}, Section 8,  \cite{AW}, 1.2)
$$ F_q: U_q \rightarrow \bar U_K,$$
which is determined by 
$$ F_q(E_i^{(r)}) = \begin{cases} {e_i^{(\frac{r}{\ell})} \text { if } \ell \mid r,}\\ {0 \text { otherwise,}} \end{cases}$$
$$F_q(F_i^{(r)}) = \begin{cases} {f_i^{(\frac{r}{\ell})} \text { if } \ell \mid r,}\\ {0 \text { otherwise,}} \end{cases} $$
$$ F_q(K_i) = 1, $$
and
$$ F_q(\left[\begin{smallmatrix} K_i \\ m \end{smallmatrix}\right]) = \begin{cases} {\binom {h_i} {\frac{m}{\ell}} \text { if } \ell \mid m,} \\ {0 \text { otherwise. }} \end{cases} $$

This allows us to make any $\bar U_K$-module $M$ into a $U_q$-module. We denote the $U_q$-module obtained in this way by $M^{[\ell]}$. Note that $u_q$ acts trivially on 
$M^{[\ell]}$. On the other hand, if $L$ is a $U_q$-module which is trivial when restricted to $u_q$, then there exists a $\bar U_K$-module $\bar L$ such that $L = \bar L^{[\ell]}$. In this case we write $\bar L = L^{[-\ell]}$.

\subsection{The quantum Steinberg tensor product theorem in $\mathcal O_q$} \label{SecSTP_q}

We define the $BGG$-category $\bar {\mathcal O}_p$ for $\bar U_K$ just like we defined $\mathcal O_q$ for $U_q$ in Section \ref{BGG cat}. The Verma modules in $ \bar {\mathcal O}_p$ are denoted $\bar \Delta_p(\lambda), \; \lambda \in X$, and the simple quotient of $\bar \Delta_p(\lambda)$ is $\bar L_p(\lambda)$. When $p = 0$ we are in the ordinary integral category situation, i.e. we have $\bar {\mathcal O}_0 = \mathcal O_{int}$, see \cite{AM}.

The quantum Steinberg tensor product theorem is now the following result.

\begin{thm} \label{STP_q} Let $\lambda \in X$ and write $\lambda = \lambda^0 + \ell \lambda^1$ with $\lambda^0 \in X_\ell$. Then we have an isomorphism in $\mathcal O_q$
$$ L_q(\lambda) \simeq L_q(\lambda^0) \otimes \bar L_p(\lambda^1)^{[\ell]}.$$
\end{thm}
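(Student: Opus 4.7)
The plan is to show that $M := L_q(\lambda^0) \otimes \bar L_p(\lambda^1)^{[\ell]}$ is simple with highest weight $\lambda$, from which $M \simeq L_q(\lambda)$ follows by the classification of simple modules in $\mathcal O_q$ (Theorem 2.3). First I would verify $M \in \mathcal O_q$: the first factor is finite dimensional since $\lambda^0 \in X_\ell \subset X^+$, the weights of $\bar L_p(\lambda^1)^{[\ell]}$ are $\ell$ times those of $\bar L_p(\lambda^1) \in \bar{\mathcal O}_p$ (so bounded above and with finite-dimensional weight spaces), and $B_q$-finiteness is inherited from the $\bar U_K$-analogue via the Frobenius map $F_q$. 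The top weight of $M$ is $\lambda^0 + \ell\lambda^1 = \lambda$, with one-dimensional weight space spanned by $v_0 \otimes w_0$, where $v_0, w_0$ denote the highest-weight vectors of the respective factors.

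The first key reduction uses Theorem \ref{Curtis_q}: $L_q(\lambda^0)|_{u_q} \simeq L_\ell(\lambda^0)$ is simple, whereas $\bar L_p(\lambda^1)^{[\ell]}$ is $u_q$-trivial (since $F_q(E_i) = F_q(F_i) = 0$ and $F_q(K_i) = 1$). A short coproduct check then shows that the $u_q$-action on any pure tensor $v \otimes w$ in $M$ equals $(Xv)\otimes w$, so $M|_{u_q} \cong L_\ell(\lambda^0) \otimes \bar L_p(\lambda^1)$ with $u_q$ acting only on the left factor. By standard module theory, every nonzero $u_q$-submodule of this (in particular every nonzero $U_q$-submodule $N \subseteq M$) has the form $L_\ell(\lambda^0) \otimes V'$, where
\[ V' := \{\, w \in \bar L_p(\lambda^1) \mid v_0 \otimes w \in N\,\} \neq 0. \]

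The main step is to show that $V'$ is a $\bar U_K$-submodule of $\bar L_p(\lambda^1)$. Since $v_0$ is a highest-weight vector, $E_i^{(r)} v_0 = 0$ for every $r \geq 1$; expanding $\Delta(E_i^{(\ell m)})$ as the usual sum of terms $E_i^{(r)}K_i^{s}\otimes E_i^{(s)}$ with $r+s=\ell m$, using $q^\ell = 1$ and the trivial $K_i$-action on $\bar L_p(\lambda^1)^{[\ell]}$, only the $r=0$ term survives and gives
\[ E_i^{(\ell m)}(v_0 \otimes w) = v_0 \otimes F_q(E_i^{(\ell m)})w = v_0 \otimes e_i^{(m)} w, \]
so $e_i^{(m)} V' \subseteq V'$. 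For the $F$-side I would switch to the lowest-weight vector $v_{low}$ of $L_q(\lambda^0)$: for $w \in V'$ the vector $v_{low} \otimes w$ lies in $N$ because $N$ is $u_q$-stable and $u_q \cdot (v_0 \otimes w) = L_\ell(\lambda^0) \otimes w$. The symmetric vanishing $F_i^{(r)}v_{low}=0$ (any such vector would lie below the lowest weight of $L_q(\lambda^0)$) turns the analogous comultiplication computation into
\[ F_i^{(\ell m)}(v_{low}\otimes w) = v_{low}\otimes f_i^{(m)} w \in N, \]
and regenerating the $u_q$-submodule shows $v_0 \otimes f_i^{(m)}w \in N$, i.e., $f_i^{(m)} w \in V'$. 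As $\bar U_K$ is generated by the divided powers $e_i^{(m)}, f_i^{(m)}$, this gives $\bar U_K$-stability of $V'$.

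The theorem then follows: being nonzero and $\bar U_K$-stable in the simple module $\bar L_p(\lambda^1)$, we must have $V' = \bar L_p(\lambda^1)$, so $N = M$. Hence $M$ is simple with highest weight $\lambda$, i.e.\ $M \simeq L_q(\lambda)$. The main obstacle I anticipate is the asymmetric coproduct analysis: one has to use $v_0$ for raising and $v_{low}$ for lowering, then transport the outcome back through the $u_q$-structure, and one also needs to verify carefully that stability under the divided powers $e_i^{(m)}, f_i^{(m)}$ really does imply stability under all of $\bar U_K$.
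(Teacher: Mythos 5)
Your argument is correct and, crucially, it is far more explicit than what the paper actually records: the paper's own ``proof'' of Theorem~\ref{STP_q} is a one-line citation to \cite{AM}, Theorem~3.1, with the remark that the proof there carries over to arbitrary $p$. Your proposal reconstructs what such an argument must look like, and it is the standard Frobenius-kernel/Steinberg-style proof: one shows that $M = L_q(\lambda^0) \otimes \bar L_p(\lambda^1)^{[\ell]}$ is a simple highest weight module of highest weight $\lambda$, using (a) Theorem~\ref{Curtis_q} to know $L_q(\lambda^0)$ restricts to a simple $u_q$-module, (b) the fact that $u_q$ acts only through the first tensor factor (a coproduct computation together with $F_q$ vanishing on $E_i, F_i$ and sending $K_i \mapsto 1$), (c) the structure of $u_q$-submodules of an isotypic module $L_\ell(\lambda^0) \otimes V$, and (d) the coproduct identities on $v_0\otimes w$ and $v_{\mathrm{low}}\otimes w$ to transfer $U_q$-stability of $N$ into $\bar U_K$-stability of $V'$. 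This is the correct mechanism, and the scalar $K_i^{\ell m} v_0 = q^{d_i \ell m \lambda^0_i} v_0 = v_0$ that you elide is indeed $1$ because $q^\ell = 1$, so your displayed formulas hold on the nose.

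Two small remarks. First, the final worry you raise (whether stability under $e_i^{(m)}, f_i^{(m)}$ implies $\bar U_K$-stability) can be dispatched more cheaply than by citing generation of the Kostant $\Z$-form: $N$ is a $U_q$-submodule of a weight module, hence itself a sum of weight spaces, and therefore $V' = \{\,w : v_0\otimes w \in N\,\}$ is automatically a sum of $\bar U_K^0$-weight spaces; so Cartan stability is free, and it suffices to have stability under the $e_i^{(m)}$ and $f_i^{(m)}$, which you establish. Second, the reduction in (c) uses that every $u_q$-submodule of $L_\ell(\lambda^0) \otimes V$ (with trivial $u_q$-action on $V$) has the form $L_\ell(\lambda^0) \otimes V'$; this relies on $\End_{u_q}(L_\ell(\lambda^0)) = K$, which holds because $L_\ell(\lambda^0) = L_q(\lambda^0)\big|_{u_q}$ is an absolutely simple highest weight module of type $\bf 1$, but it is worth flagging since $K$ is not assumed algebraically closed. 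With these two points made explicit, your proof is complete and supplies exactly the details the paper outsources to \cite{AM}.
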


\begin{proof} When $p = 0$ this was proved in \cite{Lu89}, Theorem 7.4. We gave a  different proof in \cite{AM}, Theorem 3.1 (also in the case $p = 0$). This proof carries over to arbitrary $p$.
\end{proof}

\begin{rem} The original Steinberg tensor product theorem, \cite{St} dealt with finite dimensional simple modules for seminsimple algebraic groups in characteristic $p>0$. The most general quantum version for finite dimensional simple modules known to the author is \cite{AW}, Theorem 1.10.
\end{rem}

\subsection{Steinberg's tensor product theorem in $\bar {\mathcal O}_p$}\label{SecSTP_p}
In this section we assume $p > 0$.

Consider the ``usual'' Frobenius homomorphism $\bar F:\bar  U_K \rightarrow \bar U_K$. It is given on generators by
$$ e_i^{(r)} \mapsto \begin{cases} {e_i^{(\frac{r}{p})}\text { if } p \mid r,} \\ { 0 \text { otherwise,}} \end{cases}$$
$$ f_i^{(r)} \mapsto \begin{cases} {f_i^{(\frac{r}{p})} \text { if } p \mid r, } \\ { 0 \text { otherwise,}} \end{cases}$$
$$ \binom{h_i}{m} \mapsto \begin{cases} { \binom{h_i}{\frac{m}{p}} \text { if } p \mid m}, \\ {0 \text { otherwise.}} \end{cases}$$

Twisting a $\bar U_K$-module $M$ by $\bar F$ we obtain a new $\bar U_K$-module which we denote $M^{(1)}$. We can iterate the twisting  and get in this way modules $M^{(r)}$,  $r \geq 0$. We have corresponding {\it small hyperalgebras} (or $r$-restricted enveloping algebras) $\bar u_r$, namely the subalgebras generated by all the $e_i^{(m)}$ and $f_i^{(m)}$ with $m < p^r$.

 The same arguments as in Section \ref{SecSTP_q} lead to the following result.
\begin{thm} \label{STP_p} Let $\lambda \in X$ and write $\lambda = \lambda^0 + p \lambda^1, \; \lambda^0 \in X_p, \; \lambda^1 \in X$. Then we have an isomorphism in $\bar {\mathcal O}_p$
$$ \bar L_p(\lambda) \simeq \bar L_p(\lambda^0) \otimes \bar L_p(\lambda^1)^{(1)}.$$
\end{thm}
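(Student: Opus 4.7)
The plan is to mirror the proof of Theorem \ref{STP_q}, translating every ingredient from the quantum to the modular world: $U_q \leadsto \bar U_K$, $u_q \leadsto \bar u_1$, quantum Frobenius $F_q \leadsto$ arithmetic Frobenius $\bar F$, $X_\ell \leadsto X_p$, the $\ell$-twist $(\cdot)^{[\ell]} \leadsto (\cdot)^{(1)}$, and $\ell\mu \leadsto p\mu$. First I would develop the modular analogue of the baby Verma theory of Section 3.2. With $\bar B_p := \bar U_K^0 \bar U_K^+$, define $\tilde{\bar\Delta}_p(\lambda) = \bar u_1 \bar B_p \otimes_{\bar B_p} \lambda$ of dimension $p^N$ and let $\tilde{\bar L}_p(\lambda)$ be its unique simple quotient; these exhaust the finite-dimensional simple $\bar u_1 \bar B_p$-modules. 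The character $p\mu$ extends from $\bar B_p$ to $\bar u_1 \bar B_p$ by being trivial on $\bar u_1$, yielding the periodicity $\tilde{\bar L}_p(\lambda + p\mu) \simeq \tilde{\bar L}_p(\lambda) \otimes p\mu$; combined with the modular Curtis theorem (\cite{RAG}), which says $\bar L_p(\lambda^0)|_{\bar u_1}$ is simple for $\lambda^0 \in X_p$, this gives $\tilde{\bar L}_p(\lambda^0 + p\lambda^1) \simeq \bar L_p(\lambda^0)|_{\bar u_1 \bar B_p} \otimes p\lambda^1$.

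Now set $M = \bar L_p(\lambda^0) \otimes \bar L_p(\lambda^1)^{(1)}$. The tensor product of the two highest weight vectors is a highest weight vector in $M$ of weight $\lambda^0 + p\lambda^1 = \lambda$, so $\bar L_p(\lambda)$ is the unique simple quotient of the cyclic $\bar U_K$-submodule it generates; it thus suffices to show that $M$ is itself simple. Since $\bar F$ kills the augmentation ideal of $\bar u_1$, this ideal annihilates $\bar L_p(\lambda^1)^{(1)}$, and the $\bar U_K^0$-weights there all lie in $pX$, so
$$ \bar L_p(\lambda^1)^{(1)}\big|_{\bar u_1 \bar B_p} \simeq \bigoplus_\mu p\mu $$
as a direct sum of one-dimensional characters, indexed with multiplicity by the weights $\mu$ of $\bar L_p(\lambda^1)$. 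Therefore
$$ M\big|_{\bar u_1 \bar B_p} \simeq \bigoplus_\mu \bar L_p(\lambda^0)\big|_{\bar u_1 \bar B_p} \otimes p\mu \simeq \bigoplus_\mu \tilde{\bar L}_p(\lambda^0+p\mu), $$
a semisimple $\bar u_1 \bar B_p$-module with pairwise non-isomorphic summands (distinct $\mu$ give distinct highest weights). Any non-zero $\bar U_K$-submodule $N \subseteq M$ is in particular $\bar u_1 \bar B_p$-stable, hence equal to a sum of some of these baby simples, so it contains the summand of highest weight $\lambda$, namely the line through the highest weight vector of $M$. Applying $\bar U_K$ to this line recovers $M$, forcing $N = M$.

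The principal obstacle is bookkeeping rather than conceptual: one must record carefully the modular versions of the triangular decomposition $\bar u_1 \bar B_p \simeq \bar u_1^- \otimes \bar U_K^0 \otimes \bar U_K^+$, the classification of simple $\bar u_1 \bar B_p$-modules via baby Verma modules, and Curtis's theorem for $\bar u_1$. All three are standard (see \cite{RAG}, Chapter II) and transfer verbatim from the quantum setting of Section \ref{SecSTP_q}; the only computational point to verify is that $\bar F$ vanishes on the augmentation ideal of $\bar u_1$, which is immediate from $\bar F(e_i^{(m)}) = \bar F(f_i^{(m)}) = 0$ and $\bar F\binom{h_i}{m}=0$ for $0<m<p$.
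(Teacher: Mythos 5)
Your overall strategy---mirror the quantum argument, develop modular baby Verma theory, invoke Curtis's theorem---is exactly the paper's (whose proof literally reads ``the same arguments as in Section~\ref{SecSTP_q}''). But the key step is in error. You claim
$$\bar L_p(\lambda^1)^{(1)}\big|_{\bar u_1 \bar B_p} \simeq \bigoplus_\mu p\mu,$$
a direct sum of one-dimensional characters. This is false: $\bar B_p = \bar U_K^0\bar U_K^+$ contains the higher divided powers $e_i^{(r)}$ for $r \geq p$, and these act nontrivially on the Frobenius twist ($e_i^{(p)}$ acts as $e_i$ does on $\bar L_p(\lambda^1)$). Already for $\mathfrak{sl}_2$ with $\lambda^1 = 1$, the $2$-dimensional module $\bar L_p(1)^{(1)}$ is indecomposable over $\bar u_1\bar B_p$: the operator $e_1^{(p)}$ sends the weight $-p$ line onto the weight $p$ line. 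Hence $M\big|_{\bar u_1\bar B_p}$ is not semisimple, and the subsequent claims (that any $\bar U_K$-submodule of $M$ is a sum of baby simples, that such a submodule must contain the summand of highest weight $\lambda$, and that the highest weight line generates $M$) are unsupported.

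More fundamentally, your argument never invokes the simplicity of $\bar L_p(\lambda^1)$ as a $\bar U_K$-module, which is precisely the ingredient that makes the theorem true: if you replaced $\bar L_p(\lambda^1)$ by a reducible module with the same weights, your reasoning would go through unchanged, yet the conclusion would be false. The correct route is to restrict only to $\bar u_1$ (where $M|_{\bar u_1}$ \emph{is} semisimple, but $\bar L_p(\lambda^0)$-isotypic, so there is also no collection of pairwise non-isomorphic summands in sight) and apply the exact, zero-reflecting functor $\Hom_{\bar u_1}(\bar L_p(\lambda^0), -)^{(-1)}$ of Proposition~\ref{homid}(2). It sends $M$ to $\bar L_p(\lambda^1)$; for any nonzero $\bar U_K$-submodule $N \subseteq M$ it yields a nonzero $\bar U_K$-submodule of $\bar L_p(\lambda^1)$, which equals $\bar L_p(\lambda^1)$ by simplicity; and the evaluation map $\bar L_p(\lambda^0) \otimes \Hom_{\bar u_1}(\bar L_p(\lambda^0), N) \to N$, an isomorphism by isotypicity, then forces $N = M$.
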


\begin{rem} In this theorem we have abused notation and used the same notation for the $p$-adic expression for $\lambda$ as we have earlier used for the $\ell$-adic expression. We will continue to do this in the following, sometimes also replacing $p$ by some higher $p$-power.  It should be clear from the context which case we work with.
\end{rem}

Iterating Theorem \ref{STP_p} $r$-times we get
\begin{cor} \label{iterate} Suppose $\lambda \in X$. Write $\lambda = \lambda^0 + p^r\lambda^1$ with $\lambda^0 \in X_{p^r}$ and $\lambda^1 \in X$. Then in $\bar {\mathcal O}_p$ we have
$$ \bar L_p(\lambda) \simeq \bar L_p(\lambda^0) \otimes \bar L_p(\lambda^1)^{(r)}.$$
\end{cor}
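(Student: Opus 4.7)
The plan is a straightforward induction on $r$, using Theorem \ref{STP_p} both as the base case and as the engine of the inductive step. For $r=1$ the statement is literally Theorem \ref{STP_p}. So suppose the corollary holds for some $r\geq 1$, and fix $\lambda \in X$ with its $p^{r+1}$-adic expansion $\lambda = \mu^0 + p^{r+1}\mu^1$, where $\mu^0 \in X_{p^{r+1}}$ and $\mu^1 \in X$.

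I would first apply Theorem \ref{STP_p} directly to $\lambda$, using its $p$-adic decomposition $\lambda = \alpha + p\beta$ with $\alpha \in X_p$ and $\beta \in X$. This gives
$$\bar L_p(\lambda) \simeq \bar L_p(\alpha) \otimes \bar L_p(\beta)^{(1)}.$$
Then I apply the inductive hypothesis to $\beta$: write $\beta = \beta^0 + p^r\beta^1$ with $\beta^0 \in X_{p^r}$, so that $\bar L_p(\beta) \simeq \bar L_p(\beta^0) \otimes \bar L_p(\beta^1)^{(r)}$. Since the Frobenius twist $M \mapsto M^{(1)}$ is induced by pullback along the algebra homomorphism $\bar F$ and therefore commutes with tensor products, I may twist both sides by $(1)$ and substitute to obtain
$$\bar L_p(\lambda) \simeq \bar L_p(\alpha) \otimes \bar L_p(\beta^0)^{(1)} \otimes \bar L_p(\beta^1)^{(r+1)}.$$

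The last step is simply to identify this with the desired right-hand side. From $\lambda = \alpha + p\beta^0 + p^{r+1}\beta^1$ together with the bounds $0 \leq \langle \alpha,\alpha_i^\vee\rangle \leq p-1$ and $0 \leq \langle \beta^0,\alpha_i^\vee\rangle \leq p^r - 1$, one checks that $\alpha + p\beta^0 \in X_{p^{r+1}}$ (each coordinate is at most $(p-1) + p(p^r-1) = p^{r+1} - 1$). Uniqueness of the $p^{r+1}$-adic expansion therefore forces $\mu^0 = \alpha + p\beta^0$ and $\mu^1 = \beta^1$. Finally, a single application of Theorem \ref{STP_p} to $\mu^0$ with its $p$-adic decomposition $\mu^0 = \alpha + p\beta^0$ yields $\bar L_p(\mu^0) \simeq \bar L_p(\alpha) \otimes \bar L_p(\beta^0)^{(1)}$, which when substituted above gives $\bar L_p(\lambda) \simeq \bar L_p(\mu^0) \otimes \bar L_p(\mu^1)^{(r+1)}$.

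There is no real obstacle here: the argument is bookkeeping about $p$-adic digits combined with the fact that Frobenius twisting is a tensor functor. The only point that deserves a moment of care is the compatibility of the $(1)$-twist with tensor products, which is immediate from its definition via pullback along $\bar F$, and the verification that two successive digit expansions compose correctly into a single expansion in base $p^{r+1}$.
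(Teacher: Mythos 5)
Your argument is correct and is essentially the paper's own proof: the paper simply says the corollary follows by iterating Theorem \ref{STP_p} $r$ times, and your induction on $r$ with the base-$p$ digit bookkeeping (checking $\alpha + p\beta^0 \in X_{p^{r+1}}$ and invoking uniqueness of the expansion) is exactly that iteration written out in detail. The only point worth a passing remark is that compatibility of the Frobenius twist with tensor products uses that $\bar F$ is a Hopf algebra (not merely algebra) homomorphism, which indeed holds.
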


\begin{rem} Consider $\lambda = (p^r-1)\rho, \, r>0$. In this case we use the notation $\overline{St}_r = L((p^r -1)\rho)$. This is the $r$-th Steinberg module in $\bar {\mathcal O}_p$. Applying Theorem \ref{STP_p} several times we get $\overline{St}_r = \overline{St}_1 \otimes \overline{St}_1^{(2)} \otimes \cdots \otimes \overline{St}_1^{(r-1)}$.
\end{rem}

\subsection{Applications of the Steinberg theorems}
In this section we derive several consequences of Theorems \ref{STP_q} and \ref{STP_p}. First we apply Corollary \ref{iterate} to obtain

\begin{cor}\label{first appl} Let $\lambda \in X$ and $r \in \Z_{>0}$. Write $\lambda = \lambda^0 + p^r \lambda^1$ with $\lambda^0 \in X_{p^r}$, $\lambda^1 \in X$. Fix $\mu \leq \lambda$. Then we have
$$ \dim_K \bar L_p(\lambda)_\mu = \dim_K \bar L_p(\lambda^0)_{\mu - p^r\lambda^1}$$
for all $r \gg 0$.
\end{cor}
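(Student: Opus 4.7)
The plan is to apply Corollary \ref{iterate} (the iterated Steinberg tensor product theorem) to $\bar L_p(\lambda)$, pass to the $\mu$-weight space, and then verify that in the resulting tensor decomposition only one summand survives once $r$ is large enough. By Corollary \ref{iterate} we have $\bar L_p(\lambda) \simeq \bar L_p(\lambda^0) \otimes \bar L_p(\lambda^1)^{(r)}$, and the Frobenius twist scales weights by a factor of $p^r$ (the underlying fact in characteristic $p$ is the identity $\binom{p^r k}{m} \equiv \binom{k}{m/p^r} \pmod p$ if $p^r \mid m$ and $\equiv 0$ otherwise, valid for every $k \in \Z$). Thus
\[
\bar L_p(\lambda)_\mu \;=\; \bigoplus_{\nu}\; \bar L_p(\lambda^0)_{\mu - p^r \nu} \otimes \bar L_p(\lambda^1)_{\nu},
\]
the sum running over weights $\nu$ of $\bar L_p(\lambda^1)$.

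The main step is to show that only $\nu = \lambda^1$ contributes for $r \gg 0$. Any weight $\nu$ of $\bar L_p(\lambda^1)$ satisfies $\nu \leq \lambda^1$ (as $\bar L_p(\lambda^1)$ is a quotient of the Verma module $\bar \Delta_p(\lambda^1)$), and by hypothesis $\mu \leq \lambda$, so we can write
\[
\lambda - \mu \;=\; \sum_i m_i\, \alpha_i, \qquad \lambda^1 - \nu \;=\; \sum_i k_i\, \alpha_i, \qquad m_i,\, k_i \in \N.
\]
For $\bar L_p(\lambda^0)_{\mu - p^r \nu}$ to be nonzero one needs $\mu - p^r \nu \leq \lambda^0$, and rewriting gives
\[
\lambda^0 - (\mu - p^r \nu) \;=\; (\lambda - \mu) - p^r (\lambda^1 - \nu) \;=\; \sum_i (m_i - p^r k_i)\, \alpha_i \;\in\; \N S.
\]
Linear independence of the simple roots then forces $m_i \geq p^r k_i$ for every $i$. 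The integers $m_i$ do not depend on $r$, so as soon as $p^r > \max_i m_i$ this rules out every $\nu \neq \lambda^1$.

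The surviving summand is $\bar L_p(\lambda^0)_{\mu - p^r \lambda^1} \otimes \bar L_p(\lambda^1)_{\lambda^1}$, and the second factor is one-dimensional as the top weight space of a highest weight simple module; the claimed dimension equality follows. I do not foresee a genuine obstacle: the entire argument rests on Theorem \ref{STP_p} iterated, the Lucas-type identity above, and a single quantitative bound $p^r > \max_i m_i$. The mildest technical point to keep in mind is that both $\lambda^0$ and $\lambda^1$ depend on $r$, but this is harmless --- once $r$ is large, $\lambda^1_i = \lfloor \langle \lambda, \alpha_i^\vee \rangle / p^r \rfloor$ is eventually constant, and the above bound refers to the stable regime.
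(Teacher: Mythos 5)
Your argument is correct and is essentially the paper's own proof: both apply Corollary \ref{iterate}, decompose the $\mu$-weight space of the tensor product, expand $\lambda-\mu$ and $\lambda^1-\nu$ in simple roots, and use nonnegativity plus linear independence to see that once $p^r$ exceeds the coefficients of $\lambda-\mu$ only the summand with $\nu=\lambda^1$ survives. The quantitative bound you give ($p^r$ larger than each coefficient of $\lambda-\mu$ in the simple roots) is exactly the bound recorded in the paper's remark following the corollary.
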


\begin{proof}
By Corollary \ref{iterate} we have
\begin{equation} \label{weights of simple}
\dim_K \bar L_p(\lambda)_\mu = \sum_{\nu, \eta} \dim_K \bar L_p(\lambda^0)_\nu \dim_K \bar L_p(\lambda^1)_\eta,
\end{equation}
where the sum runs over all $ \nu \leq \lambda^0 $ and $\eta \leq \lambda^1$ satisfying $\nu + p^r \eta = \mu$. We write $ \nu = \lambda^0 - \sum_{\alpha \in S} m_\alpha \alpha$, $\eta = \lambda^1 - \sum_{\alpha \in S} k_\alpha \alpha$ and $\mu = \lambda - \sum_{\alpha \in S} n_\alpha \alpha$ with $m_\alpha, k_\alpha, n_\alpha \in \Z_{\geq 0}$. The condition $\nu + p^r \eta = \mu$ then means
$$ m_\alpha + p^r k_\alpha = n_\alpha \text { for all } \alpha \in S.$$
Hence if  $p^r > n_\alpha$ for all $\alpha \in S$ we must have $k_\alpha = 0$ and $m_\alpha = n_\alpha$ for all $\alpha \in S$. In other words, the equation (\refeq{weights of simple}) contains just $1$ summand on the right hand side, namely the one with  $\eta = \lambda^1$ and $\nu = \lambda^0  -(\lambda - \mu) = \mu - p^r \lambda^1$.  This gives the stated formula.

\end{proof}
\begin{rem} 
\begin{enumerate}
\item Note that the proof gives an explicit bound for how large $r$ needs to be in order for the formula in this corollary to hold for a given $\mu = \lambda - \sum_{\alpha \in S} n_\alpha \alpha$, namely  $r > n_\alpha$ for all $\alpha \in S$.
\item The same formula was obtained by P. Fiebig, see \cite{PF},  Theorem 1,  with a different proof.
\end{enumerate}
\end{rem}
\begin{cor}\label{irr in Oq}
The characters $\{\Char L_q(\lambda) \mid \lambda \in X\}$ of the irreducible modules in $\mathcal O_q$ are determined by the two finite subsets $\{\Char L_q(\lambda) \mid  \lambda \in X_\ell\}$ and $\{\Char \bar L_p(\lambda) \mid \lambda \in X_p\}$.
\end{cor}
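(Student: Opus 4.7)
The plan is to combine the two Steinberg tensor product theorems with the character stabilisation in Corollary~\ref{first appl}, reducing the determination of $\Char L_q(\lambda)$ weight-by-weight to the two finite sets. First I would apply Theorem~\ref{STP_q} to $\lambda \in X$: writing $\lambda = \lambda^0 + \ell \lambda^1$ with $\lambda^0 \in X_\ell$ and $\lambda^1 \in X$, the isomorphism $L_q(\lambda) \simeq L_q(\lambda^0) \otimes \bar L_p(\lambda^1)^{[\ell]}$ yields
$$\Char L_q(\lambda) = \Char L_q(\lambda^0) \cdot \bigl(\Char \bar L_p(\lambda^1)\bigr)^{[\ell]},$$
where $[\ell]$ applied to a character denotes the weight rescaling $e^\mu \mapsto e^{\ell \mu}$ induced by the quantum Frobenius. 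The first factor lies in $\{\Char L_q(\mu) \mid \mu \in X_\ell\}$, so it remains to show that $\Char \bar L_p(\lambda^1)$ for arbitrary $\lambda^1 \in X$ is determined by $\{\Char \bar L_p(\mu) \mid \mu \in X_p\}$.

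For this, I would fix a weight $\mu$ and take $r$ large enough that Corollary~\ref{first appl} gives
$$\dim_K \bar L_p(\lambda^1)_\mu = \dim_K \bar L_p(\nu^0)_{\mu - p^r \nu^1},$$
where $\lambda^1 = \nu^0 + p^r \nu^1$ with $\nu^0 \in X_{p^r}$ and $\nu^1 \in X$. Now $\nu^0$ is dominant and bounded, so iterating Theorem~\ref{STP_p} along its $p$-adic expansion $\nu^0 = \sum_{i=0}^{r-1} p^i \nu_i$ with $\nu_i \in X_p$ produces
$$\bar L_p(\nu^0) \simeq \bar L_p(\nu_0) \otimes \bar L_p(\nu_1)^{(1)} \otimes \cdots \otimes \bar L_p(\nu_{r-1})^{(r-1)}.$$
Taking characters expresses $\Char \bar L_p(\nu^0)$ as a product of $p^i$-rescalings of members of $\{\Char \bar L_p(\mu) \mid \mu \in X_p\}$, which determines the weight multiplicity in question and, by letting $\mu$ vary, the full character $\Char \bar L_p(\lambda^1)$.

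The main subtlety, rather than a serious obstacle, is that for an arbitrary $\lambda^1 \in X$ the simple module $\bar L_p(\lambda^1)$ is typically infinite-dimensional, so a naive recursive application of Theorem~\ref{STP_p} directly to $\bar L_p(\lambda^1)$ would not terminate: the inductively produced `higher' weights remain in $X$ rather than in a shrinking bounded set. Corollary~\ref{first appl} circumvents this by replacing the full character of $\bar L_p(\lambda^1)$ with individual weight multiplicities of the dominant simple $\bar L_p(\nu^0)$ with $\nu^0 \in X_{p^r}$, for which the iterated tensor product theorem does terminate after $r$ steps.
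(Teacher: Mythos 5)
Your argument is correct and is essentially the paper's own proof: apply Theorem~\ref{STP_q} to factor $\Char L_q(\lambda)$ as $\Char L_q(\lambda^0)$ times the $\ell$-rescaled $\Char \bar L_p(\lambda^1)$, then use Corollary~\ref{first appl} to reduce each weight multiplicity of $\bar L_p(\lambda^1)$ to a weight of a $p^r$-restricted simple module. The only difference is that you spell out the final iteration of Theorem~\ref{STP_p} on the restricted part (the paper leaves this implicit in ``apply Corollary~\ref{first appl}''), which is a harmless and correct elaboration.
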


\begin{proof} Let $\lambda \in X$ and write $\lambda = \lambda^0 + \ell \lambda^1$ with $\lambda^0 \in X_\ell$ and $\lambda^1
 \in X$. By Theorem \ref{STP_q} we have $\Char L_q(\lambda) =\Char L_q(\lambda^0
 ) (\Char \bar L_p(\lambda^1))^{(\ell)}$ Here we use the notation  $(\sum_\mu a_\mu e^\mu)^{(\ell)} = \sum_\mu a_\mu e^{\ell \mu}$. Now apply Corollary \ref{first appl}.
\end{proof}

\begin{rem} The characteristic $0$ analogue of this corollary says that in that case the irreducible characters in $\mathcal O_q$ are given by the finite set $\{ch L_q(\lambda) \mid \lambda \in X_\ell\}$ together with the irreducible characters in ordinary (integral) category $\mathcal O$, cf. \cite{AM}, Section 5. 
 The latter characters are given by the Kazhdan-Lusztig polynomials for the Weyl Group $W$ of $\mathfrak g$, \cite{KL},  whereas the former set of characters are determined by the Kazhdan-Lusztig polynomials for the affine Weyl group $W \ltimes \Z R^\vee$, see \cite {KT1} and \cite{KT2}. 
 In characteristic $p$ the situation is more complicated: the modular irreducible characters $\{\Char \bar L_p(\lambda)\mid \lambda \in X_p\}$ are determined by the $p$-Kazhdan-Lusztig polynomials (at least for $p \geq 2h-1$), see \cite {AMRW}, \cite{RW}. 
 The quantum irreducible characters $\{\Char L_q(\lambda) \mid \lambda \in X_\ell\}$ are (at least for $p \gg 0$) identical to their characteristic $0$ counterparts, as we shall now prove.
\end{rem}

\begin{prop} Let $\zeta \in \C$ be a primitive $\ell$-th root of $1$. Then for $p \gg 0$ we have
$$ \Char L_q(\lambda) = \Char L_\zeta(\lambda)$$
for all $\lambda \in X_\ell$.
\end{prop}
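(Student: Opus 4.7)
The strategy is to show that $L_\zeta(\lambda)$ admits an integral $\mathcal{A}$-form over $\mathcal{A}=\Z[v,v^{-1}]$ whose specialisation at $v=q$ (for $p$ avoiding a finite set of bad primes) recovers $\Ll$. Since $X_\ell$ is a finite set, it suffices to treat each $\lambda\in X_\ell$ separately and take the maximum of the resulting bounds on $p$.

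Fix $\lambda\in X_\ell$ and let $\phi_\zeta\colon\mathcal{A}\to\C,\ v\mapsto\zeta$ and $\phi_q\colon\mathcal{A}\to K,\ v\mapsto q$ be the relevant specialisations. Lusztig's divided-power construction produces a free $\mathcal{A}$-module $\Delta_{\mathcal{A}}(\lambda)=\Ua\otimes_{B_{\mathcal{A}}}\mathcal{A}_\lambda$ with $\Char\Delta_{\mathcal{A}}(\lambda)=\chi(\lambda)$, specialising via $\phi_\zeta$ and $\phi_q$ to $\Delta_\zeta(\lambda)$ and $\Dl$ respectively. Let $N_\zeta\subset\Delta_\zeta(\lambda)$ be the unique maximal submodule and let $\tilde N\subset\Delta_{\mathcal{A}}(\lambda)$ be its preimage under the specialisation map. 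Taking the $\mathcal{A}$-saturation $N_{\mathcal{A}}$ of $\tilde N$ inside $\Delta_{\mathcal{A}}(\lambda)$ and setting $L_{\mathcal{A}}(\lambda)=\Delta_{\mathcal{A}}(\lambda)/N_{\mathcal{A}}$ yields a finitely generated torsion-free $\mathcal{A}$-module of generic rank $d:=\dim_\C L_\zeta(\lambda)$, satisfying $L_{\mathcal{A}}(\lambda)\otimes_{\mathcal{A}}\C\cong L_\zeta(\lambda)$. After inverting the finite set of rational primes supporting the $\mathcal{A}$-torsion of the finitely generated module $N_{\mathcal{A}}/\tilde N$, $L_{\mathcal{A}}(\lambda)$ becomes locally $\mathcal{A}$-free of rank $d$, so for $p$ outside that finite set the reduction $L_{\mathcal{A}}(\lambda)\otimes_{\mathcal{A}} K$ is a $d$-dimensional $U_q$-quotient of $\Dl$ with one-dimensional $\lambda$-weight space. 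Consequently it surjects onto $\Ll$, giving the weight-by-weight estimate $\dim_K\Ll_\mu\le\dim_\C L_\zeta(\lambda)_\mu$ for every $\mu$.

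For the reverse inequality, carry out the analogous construction on the dual Weyl side, where $\Ll$ is the simple socle of $\Nl=H^0_q(\lambda)$. Lifting the embedding $L_\zeta(\lambda)\hookrightarrow H^0_\zeta(\lambda)$ to an $\mathcal{A}$-form and reducing modulo $p$ produces, for $p$ outside a further finite set of primes, a $d$-dimensional $U_q$-submodule of $\Nl$ containing the highest-weight line. Any such submodule must contain the socle $\Ll$, giving $\dim_K\Ll_\mu\ge\dim_\C L_\zeta(\lambda)_\mu$. Combining the two estimates produces the claimed character equality.

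The principal obstacle is the integrality control needed to argue that only finitely many primes $p$ can destroy simplicity of $L_{\mathcal{A}}(\lambda)\otimes_{\mathcal{A}} K$. This ultimately rests on the Noetherian finiteness of $\mathcal{A}$-torsion in a finitely generated $\mathcal{A}$-module together with the finiteness of $\{\mu\in X^+\mid\mu\le\lambda\}$, but extracting an effective uniform bound on $p$ (as opposed to mere existence) is considerably more delicate and is precisely what is sidestepped by the qualifier $p\gg 0$ in the statement.
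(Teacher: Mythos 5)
Your strategy contains two genuine flaws. First, the base ring: you cannot carry out the lattice construction over $\mathcal{A}=\Z[v,v^{-1}]$ as described. The preimage $\tilde N$ of the maximal submodule $N_\zeta$ under the specialisation at $\zeta$ contains $\Phi_\ell\cdot\Delta_{\mathcal A}(\lambda)$, so the quotient $\Delta_{\mathcal A}(\lambda)/\tilde N$ is entirely $\mathcal{A}$-torsion (it is killed by the cyclotomic polynomial $\Phi_\ell$); hence the saturation $N_{\mathcal A}$ is all of $\Delta_{\mathcal A}(\lambda)$ and your $L_{\mathcal A}(\lambda)$ is zero. Relatedly, the generic rank of any torsion-free quotient of $\Delta_{\mathcal A}(\lambda)$ is its dimension over $\Q(v)$, which is given by the Weyl character, not by $\dim_\C L_\zeta(\lambda)$: the simple module at a root of unity does not spread out over the generic point of $\operatorname{Spec}\Z[v,v^{-1}]$ with the same dimension. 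The comparison must be made over the quotient $A_\ell=\Z[v,v^{-1}]/(\Phi_\ell)\cong\Z[\zeta]$, through which \emph{both} structure maps $v\mapsto\zeta$ and $v\mapsto q$ factor; this is exactly the device of the paper's proof. (A smaller slip: $U_{\mathcal A}\otimes_{B_{\mathcal A}}\mathcal{A}_\lambda$ is the Verma module, whose character is not $\chi(\lambda)$; since $\lambda\in X_\ell\subset X^+$ the right objects are the Weyl module $H^N_q(w_0\cdot\lambda)$ and its dual $H^0_q(\lambda)$.)

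Second, and more seriously, your "reverse inequality" is not established. If for good $p$ you produce a $d$-dimensional submodule $M\subseteq\nabla_q(\lambda)$ containing the highest-weight line, then $M$ contains the simple socle $L_q(\lambda)$, but that yields $\dim_K L_q(\lambda)_\mu\le\dim_K M_\mu=\dim_\C L_\zeta(\lambda)_\mu$ --- the \emph{same} inequality as your first step, not its reverse. To close the argument you would need to observe that the reduced lattice is generated by its one-dimensional $\lambda$-weight space (a highest-weight module), so that its image in $\nabla_q(\lambda)$ is precisely the submodule generated by the highest-weight vector, i.e.\ $L_q(\lambda)$, and combine this with injectivity of the reduced inclusion; that injectivity is governed by $p$-torsion in the cokernel of the $A_\ell$-lattice inclusion, and this is where the restriction $p\gg 0$ genuinely enters (you instead attach the prime-inversion to making the lattice locally free, which over $A_\ell$ is automatic and over $\mathcal{A}$ is moot). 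The paper's proof packages exactly this point efficiently: $L$ at either specialisation is the image of the canonical map $H^N(w_0\cdot\lambda)\to H^0(\lambda)$, the cokernel $C_{A_\ell}(\lambda)$ is a finitely generated $A_\ell$-module, and the two characters agree precisely when $C_{A_\ell}(\lambda)$ has no $p$-torsion, which excludes only finitely many $p$ for each of the finitely many $\lambda\in X_\ell$.
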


This result was noticed many years ago by L. Thams in his PhD-thesis (Aarhus University, 1993). As this thesis is not easily available we have reproduced the proof below.

\begin{proof}
Let $\lambda \in X^+$. We shall use notation from \cite{APW}. For instance, $H^N_q(w_0 \cdot \lambda)$ will denote the Weyl module for $U_q$ with highest weight $\lambda \in X^+$, and $H^0_q(\lambda)$ is the corresponding dual Weyl module (the restrictions on $\ell$ in \cite{APW} are no longer needed as the quantum Kempf's vanishing theorem have been proved in general, see \cite{SR-H}).

The irreducible module $L_q(\lambda)$ is the image of the natural homomorphism $c_q: H^N_q(w_0\cdot \lambda) \rightarrow H^0_q(\lambda)$. Both these modules have $A$-forms, which we denote $H^N_A(w_0\cdot \lambda)$ and $H^0_A (\lambda)$, respectively. Also $c_q$ lifts to a $U_A$-homomorphism $c_A: H^N_A(w\cdot\lambda) \rightarrow H_A^0(\lambda)$. This is an injection because the corresponding homomorphism over the fraction field $\Q(v)$ is an isomorphism, cf \cite{APW}, Theorem 6.4. Setting $C_A(\lambda) = \coker(c_A)$ we have the short exact sequence 
$$ 0 \rightarrow H_A^N(w_0\cdot \lambda) \rightarrow H^0_A(\lambda) \rightarrow C_A(\lambda) \rightarrow 0.$$
Tensoring this sequence by $A_\ell = A/(\Phi_\ell))$, where $\Phi_\ell$ is the $\ell$-th cyclotomic polynomial in $A$ we get the exact sequence 
\begin{equation} \label{ses A_l}
 H_{A_\ell}^N(w_0\cdot \lambda) \rightarrow H^0_{A_\ell}(\lambda) \rightarrow C_A(\lambda)\otimes_A A_\ell \rightarrow 0.
 \end{equation}
Recall that we consider $K$, respectively $\C$ as $A$-algebras via the homomorphisms taking $v \in A$ into $q$ and $\zeta$, respectively. Note that both these structure maps factor through $A_\ell$. So when we tensor (\refeq{ses A_l}) by $K$, respectively $\C$ we get exact sequences
$$ H^N_q(w_0\cdot \lambda) \rightarrow H^0_q(\lambda) \rightarrow C_{A_\ell}(\lambda) \otimes_{A_\ell}  K\rightarrow 0, $$
respectively
$$ H^N_\zeta (w_0\cdot \lambda) \rightarrow H^0_\zeta (\lambda) \rightarrow C_{A_\ell}(\lambda) \otimes_{A_\ell} \C \rightarrow 0 $$
These two sequences show that $\Char L_q(\lambda) = \Char L_\zeta (\lambda)$ if and only if  $\dim _K  (C_{A_\ell}(\lambda) \otimes_{A_\ell}  K) = \dim_\C (C_{A_\ell}(\lambda) \otimes_{A_\ell} \C)$. This holds in turn iff $C_{A_\ell}(\lambda)$ has no $p$-torsion. As $C_{A_\ell}(\lambda)$ is finitely generated, this will certainly be true if  $p$ is big enough, say $p > m(\lambda)$ where $m(\lambda) \in \Z_{>0}$. We conclude that the proposition holds for $p > M(\ell) = \max\{m(\lambda) \mid \lambda \in X_\ell \}$.

\end{proof}
\begin{rem} I have no estimate for the $M(\ell)$ occurring in this proof.
\end{rem}

\subsection{Frobenius homomorphisms and $\Hom$-space identities}

For later use we record here a couple of general identities for $\Hom$-spaces related to the quantum and the modular  Frobenius homomorphisms from Section \ref{SecSTP_q}, respectively Section \ref{SecSTP_p}.

\begin{prop} \label{homid} \begin{enumerate}
\item Let $M$ and $N$ be two $U_q$-modules and $L$ be a $\bar U_K$-module. Then the quantum Frobenius homomorphism $F_q$ gives an isomorphism
$$ \Hom_{U_q}(M \otimes L^{[\ell]}, N) \simeq \Hom_{\bar U_K}(L, \Hom_{u_q}(M, N)^{[-\ell]}).$$
\item Let $L, M$ and $N$ be three $\bar U_K$-modules. Then for each $r > 0$ we have an isomorphism
$$\Hom_{\bar U_K}(M \otimes L^{(r)}, N) \simeq \Hom_{\bar U_K}(L, \Hom_{\bar u_r}(M, N)^{(-r)}).$$
\end{enumerate}
\end{prop}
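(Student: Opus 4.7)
The plan is to deduce both identities from the standard tensor--Hom adjunction of $K$-vector spaces, enhanced by the fact that a module of the form $L^{[\ell]}$ (resp.\ $L^{(r)}$) is annihilated by the augmentation ideal of the small quantum group $u_q$ (resp.\ of the $r$-th Frobenius kernel $\bar u_r$), so that equivariance for $\Uq$ (resp.\ $\bar U_K$) decomposes into equivariance for the small algebra followed by equivariance for the Frobenius quotient.

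For part (1), I would begin with the canonical $K$-linear isomorphism
$$ \varphi: \Hom_K(M \otimes L, N) \xrightarrow{\sim} \Hom_K(L, \Hom_K(M, N)), \qquad \varphi(f)(l)(m) = f(m \otimes l). $$
Both sides carry a natural $\Uq$-action coming from the Hopf algebra structure: on $\Hom_K(M, N)$ via $(u \cdot g)(m) = \sum u_{(1)} \cdot g(S(u_{(2)}) \cdot m)$, and then on $\Hom_K(L, -)$ in the analogous way. A direct calculation using the coproduct shows $\varphi$ is $\Uq$-equivariant. Taking $\Uq$-invariants yields
$$ \Hom_{\Uq}(M \otimes L^{[\ell]}, N) \simeq \Hom_{\Uq}(L^{[\ell]}, \Hom_K(M, N)). $$
Now I would use that $u_q$ acts through the counit on $L^{[\ell]}$: for any $\Uq$-map $\psi: L^{[\ell]} \to \Hom_K(M, N)$ and any $x$ in the augmentation ideal of $u_q$, $x \cdot \psi(l) = \psi(x \cdot l) = 0$, so $\psi$ factors through $\Hom_K(M,N)^{u_q} = \Hom_{u_q}(M, N)$. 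Since $u_q$ is a normal Hopf subalgebra of $\Uq$ (Lusztig), this subspace is $\Uq$-stable and $u_q$ continues to act trivially on it, so by the discussion of Section 3.4 it has the form $R^{[\ell]}$ for a unique $\bar U_K$-module $R = \Hom_{u_q}(M, N)^{[-\ell]}$. Combining with the tautology $\Hom_{\Uq}(L^{[\ell]}, R^{[\ell]}) = \Hom_{\bar U_K}(L, R)$ gives the desired identity.

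Part (2) is proved by the same argument, with $(\Uq, u_q, \bar U_K, F_q)$ replaced by $(\bar U_K, \bar u_r, \bar U_K, \bar F^r)$ and $(\cdot)^{[\ell]}/(\cdot)^{[-\ell]}$ replaced by $(\cdot)^{(r)}/(\cdot)^{(-r)}$. The main step that actually requires care is the normality of $u_q$ in $\Uq$ and of $\bar u_r$ in $\bar U_K$, which is what guarantees simultaneously that $\Hom_{u_q}(M, N)$ (resp.\ $\Hom_{\bar u_r}(M, N)$) is stable under the large algebra and that the small algebra acts trivially on it, so that the Frobenius un-twist makes sense. These normality statements are standard (see \cite{Lu} for the quantum case and \cite{RAG} for the modular case), and once they are in hand the remainder of the argument is a routine verification with the coproduct.
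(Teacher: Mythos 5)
The paper itself gives only a one-sentence heuristic, pointing at the group-theoretic identity $M^G = (M^H)^{G/H}$ for a normal subgroup $H \le G$; your proof spells out a genuine argument along the same lines, which is valuable. The key ingredients you use — normality of $u_q$ in $U_q$ (resp.\ $\bar u_r$ in $\bar U_K$), $u_q$-triviality of $L^{[\ell]}$, the identification $\Hom_K(M,N)^{u_q}=\Hom_{u_q}(M,N)$, and the Frobenius untwist tautology — are exactly the right ones and match the paper's intent.

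There is, however, one step I would flag. You assert that the $K$-linear adjunction
$\varphi\colon \Hom_K(M\otimes L,N)\to\Hom_K(L,\Hom_K(M,N))$, $\varphi(f)(l)(m)=f(m\otimes l)$,
is $U_q$-equivariant by ``a direct calculation using the coproduct.'' With the usual conjugation action $(u\cdot g)(m)=\sum u_{(1)}g(S(u_{(2)})m)$ on both internal Hom spaces, that calculation actually yields
$\varphi(u\cdot f)(l)(m)=\sum u_{(1)}f(S(u_{(3)})m\otimes S(u_{(2)})l)$ versus
$(u\cdot\varphi(f))(l)(m)=\sum u_{(1)}f(S(u_{(2)})m\otimes S(u_{(3)})l)$,
which agree only when the coproduct is cocommutative. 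The standard Hopf-equivariant internal Hom adjunction curries out the \emph{left} tensor factor, i.e.\ $\Hom_H(A\otimes B,C)\simeq\Hom_H(A,\Hom_K(B,C))$; currying out the right factor requires a braiding or cocommutativity. So for part (1), where $U_q$ is genuinely non-cocommutative, this step as stated is a gap. (For part (2), $\bar U_K$ is the hyperalgebra of an enveloping algebra, hence cocommutative, and your argument is unobjectionable there.)

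The cleanest repair — and what the paper's heuristic $M^G=(M^H)^{G/H}$ really suggests — is to take $u_q$-invariants \emph{first}: because $L^{[\ell]}$ is $u_q$-trivial, for $x\in u_q$ one has $x\cdot(m\otimes l)=xm\otimes l$, so $u_q$-equivariance of $f\colon M\otimes L^{[\ell]}\to N$ amounts to $f(\,\cdot\,\otimes l)\in\Hom_{u_q}(M,N)$ for every $l$. Hence $\Hom_K(M\otimes L^{[\ell]},N)^{u_q}=\Hom_K(L^{[\ell]},\Hom_{u_q}(M,N))$ directly, with no adjunction needed at the $U_q$-level; one then passes to $\bar U_K$-invariants of the residual action (here the normality input enters, guaranteeing both $U_q$-stability and $u_q$-triviality of $\Hom_{u_q}(M,N)$, hence the existence of the untwist $\Hom_{u_q}(M,N)^{[-\ell]}$). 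The remainder of your argument goes through unchanged after this substitution.
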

\vskip .5 cm 
The identities in this proposition are the Hopf-algebra versions of the corresponding ones related to  a group homomorphism $f: G  \rightarrow G'$. If $H$ denotes the kernel of $f$ then for any $G$-module $M$ we have an obvious identification $M^G = (M^H)^{G/H}$. On $\Hom$-spaces this leads to identities
$$ \Hom_G(M \otimes L, N) = \Hom_{G/H}(L, \Hom_H(M, N))$$
for any two  $G$-modules $M$ and $N$, and any $G/H$-module $L$.

\section{Linkage principles}
In this section we shall derive linkage principles for Verma modules, first in $\bar {\mathcal O_p}$ and then in $\mathcal O_q$. To state them we shall need the affine Weyl groups $W_p$ and $W_\ell$.  These are the groups generated by the affine reflections $s_{\beta, m}$, where $\beta  \in R^+$ and $m\in \Z$, defined by
$$ s_{\beta, n} \cdot \lambda = s_\beta \cdot \lambda + mp \beta, \; \lambda \in X$$
in the case of $W_p$. For $W_\ell$ we replace $p$ by $\ell$ in this formula. 

\subsection{Linkage principles for Verma modules in $\bar {\mathcal O_p}$}
We assume $p > 0$ and use the notation from Section 3.

 The hyperalgebra $\bar U_K$ has triangular decomposition $\bar U_K = \bar U_K^{-} \bar U_K^{0} \bar U_K^+$, where $\bar U_K^-$, respectively $\bar U_K^+$,  is the subalgebra generated by all $f_i^{(r)}$, respectively $e_i^{(r)}$, and $\bar U_K^0$ is generated by all $\binom{h_i}{m}$. We set $\bar B_K = \bar U_K^0 \bar U_K^+$. Then the Verma module in $\bar {\mathcal O_p}$ with highest weight $\lambda \in X$ is 
$$\bar \Delta_p(\lambda) = \bar U_K \otimes _{\bar B_K} \lambda.$$

Recall from Section 3.6 that we have the small hyperalgebra $\bar u_1$ is of $\bar U_K$  generated by $f_i, h_i, e_i \; i= 1, 2, 3, ,\cdots , n$., and corresponding higher versions $u_r$ for all $r > 1$. We have then baby Verma modules defined  by
$$ \tilde \Delta_r(\lambda) = \bar u_r \bar B_K \otimes_{\bar B_K} \lambda, \; \lambda \in X.$$
These modules have simple quotients denoted $\tilde  L_{p^r}(\lambda)$.

Recall from \cite{RAG}, II. 9.11 (or see \cite{Doty} for a slightly stronger version) that we have the following linkage principle for baby Verma modules.

\begin{thm} \label{SLP_r} Let $\lambda, \mu \in X$ and $r \geq 1$. If $\tilde L_{p^r}(\mu)$ is a composition factor of $\tilde \Delta_{r}(\lambda)$ then $\mu \uparrow \lambda$.
\end{thm}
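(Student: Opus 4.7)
The plan is to prove this via the Jantzen filtration / sum formula method, adapted to the baby Verma modules of $\bar u_r \bar B_K$. The starting point is a deformation of the highest weight: I would work over a discrete valuation ring $A$ with residue field $K$ and uniformizer $t$, and define a deformed baby Verma module $\tilde\Delta_r(\lambda+t\rho)_A = \bar u_r\bar B_{K,A} \otimes_{\bar B_{K,A}} (\lambda+t\rho)$, which is free as an $A$-module with weight-space ranks given by the formula in (\ref{baby char}). Reducing mod $t$ recovers the ordinary $\tilde\Delta_r(\lambda)$.

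The key technical step is to equip $\tilde\Delta_r(\lambda+t\rho)_A$ with a contravariant bilinear form using the Chevalley anti-involution of $\bar U_K$ (which swaps $e_i \leftrightarrow f_i$ and fixes the Cartan), and compute the determinant of this Shapovalov-type form on each weight space $\tilde\Delta_r(\lambda+t\rho)_{\lambda-\nu}$. Using PBW-type bases involving divided powers $f_i^{(s)}$ with $s<p^r$, this determinant should factor as a product of linear polynomials in $t$, one for each affine reflection $s_{\beta,mp^r}$ with $\beta \in R^+$, $m \in \Z_{>0}$, and $\langle \lambda+\rho,\beta^\vee\rangle \geq mp^r$. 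The exact multiplicity with which each factor $(t)$-like polynomial appears should match the character of $\tilde\Delta_r(s_{\beta,mp^r}\cdot\lambda)$ in the relevant weight space.

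Defining the Jantzen filtration $\tilde\Delta_r(\lambda)=\tilde\Delta_r(\lambda)^{0}\supset \tilde\Delta_r(\lambda)^{1}\supset\cdots$ by orders of vanishing of the form, the determinant formula translates into the sum formula
\[
\sum_{i\geq 1}\Char \tilde\Delta_r(\lambda)^i \;=\; \sum_{\substack{\beta\in R^+,\;m>0\\ s_{\beta,mp^r}\cdot\lambda<\lambda}}\Char \tilde\Delta_r(s_{\beta,mp^r}\cdot\lambda).
\]
In particular $\tilde\Delta_r(\lambda)/\tilde\Delta_r(\lambda)^{1}$ is a self-dual quotient whose only composition factor is $\tilde L_{p^r}(\lambda)$, and every composition factor of $\tilde\Delta_r(\lambda)^{1}$ appears as a composition factor of some $\tilde\Delta_r(s_{\beta,mp^r}\cdot\lambda)$ on the right-hand side. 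I would then finish by Noetherian induction on $\lambda$ in the partial order $\leq$: by induction each $\tilde L_{p^r}(\mu)$ occurring in $\tilde\Delta_r(s_{\beta,mp^r}\cdot\lambda)$ satisfies $\mu\uparrow s_{\beta,mp^r}\cdot\lambda$, and since $s_{\beta,mp^r}\cdot\lambda\uparrow\lambda$ by construction, transitivity of $\uparrow$ gives $\mu\uparrow\lambda$.

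The main obstacle is Step 2, the Shapovalov-type determinant computation at level $p^r$. For $r=1$ this reduces to Jantzen's classical calculation in \cite{RAG}, II.8, since $\bar u_1$ is an ordinary restricted enveloping algebra. For $r>1$, however, one must carry out the PBW-analysis using Kostant-style divided powers $f_i^{(s)}$ with $s<p^r$, and the combinatorics of products $\prod_\beta f_\beta^{(n_\beta)}$ modulo $p^r$ is noticeably more delicate — this is precisely the refinement in \cite{Doty} beyond Jantzen's $r=1$ statement. Once the determinant formula is in hand at the correct affine level $p^r$, the inductive argument above is routine.
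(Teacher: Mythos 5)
The paper does not actually prove Theorem 4.1: it is quoted from Jantzen \cite{RAG}, II.9.11, and from \cite{Doty}. So you are supplying a proof from scratch, and a Jantzen-filtration argument is indeed the right family of ideas (it is how the cited results are obtained). However, the key step as you state it is false. The behaviour of the contravariant form on $\tilde\Delta_r(\lambda)$ is governed by the congruences of $\langle\lambda+\rho,\beta^\vee\rangle$ modulo $p$ (resp.\ $p^r$), not by the size condition $\langle\lambda+\rho,\beta^\vee\rangle\geq mp^r$, and the reflections that must enter are those of the affine Weyl group $W_p$ (levels $mp$, as in Definition 4.2), not only those at levels $mp^r$. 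Concretely, for $\mathfrak{sl}_2$, $r=1$, $\lambda=0$, the module $\tilde\Delta_1(0)$ has the composition factor $\tilde L_p(-2)=\tilde L_p(s_\alpha\cdot 0)$, yet your right-hand side is empty because $\langle\rho,\alpha^\vee\rangle=1<p$, so your sum formula predicts that $\tilde\Delta_1(0)$ is simple; the proposed determinant factorization already fails at $r=1$. (Also, \cite{RAG}, II.8 treats Weyl modules, not baby Verma modules, so the claimed reduction of the $r=1$ case is not available as stated, and the deformation ``$\lambda+t\rho$'' needs care to make sense on the divided-power torus algebra $\bar U_K^0$.)

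Moreover, for $r\geq 2$ the conclusion your induction would deliver --- strong linkage using only reflections $s_{\beta,mp^r}$ --- is itself false, so no repair of the determinant computation can save the formula in that shape. For $\mathfrak{sl}_2$ one has $[\tilde\Delta_2(0):\tilde L_{p^2}(-2p)]\neq 0$ (this follows from the computations in Section 5.2 combined with Corollary 4.3), and $-2p$ cannot be reached from $0$ by an increasing chain of reflections at level $p^2$: the only strong-linkage chain is the level-$p$ one, $-2p\uparrow -2\uparrow 0$. Any correct sum formula must therefore carry terms attached to $W_p$-reflections, with the reflected weights folded back into the box $\{\lambda-\sum_{\beta}n_\beta\beta \mid 0\leq n_\beta<p^r\}$ in which all weights of $\tilde\Delta_r(\lambda)$ lie; this is precisely the shape of the $G_rT$ sum formula in \cite{RAG} and of Doty's argument. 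With that corrected input, your concluding Noetherian induction using transitivity of $\uparrow$ does go through, but as written the proof fails at its central step.
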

Here $\uparrow$ is the strong linkage relation, see \cite{RAG}, II.6.4:
\begin{defn} \label{defSL}
Let $\lambda, \mu \in X$. We say that $\mu$ is strongly linked to $\lambda$ and write $\mu \uparrow \lambda$ iff there exist weights $ \mu_1, \mu_2, \cdots , \mu_r \in X$ and reflections $s_1, s_2, \cdots , s_{r-1} \in W_p$ such that 
$$\mu_{i+1} = s_i \cdot \mu_i \text { and }
\mu = \mu_1 \leq \mu_2 \leq  \mu_3 \leq \cdots \leq \mu_r = \lambda.$$
\end{defn}
%\vskip .5 cm
Let now $M$ be an arbitrary finite dimensional $\bar u_{r} \bar B_K$-module (as always we assume $M$ is also a weight module, i.e. $M$ is of type $\bf 1$). Then we set 
$$ \bar \Delta'_{p^r} (M) = \bar U_K \otimes_{\bar u_{r}\bar B_K} M.$$
Clearly $\bar \Delta'_{p^r}$ is an exact functor from the category of finite dimensional  $\bar u_{r} \bar B_K$-modules into $\bar {\mathcal O}_p$. In addition it has the following properties.

If $V$ is a $\bar B_K$-module, then the Frobenius twist $V^{(r)}$ is a $\bar u_{r}\bar B_K$-module (with trivial $\bar u_{r}$-
action) and we get \cite{AW}, Lemma 3.2
\begin{equation} \label{twist}
\bar \Delta'_{p^r} (V^{(r)}) \simeq \bar \Delta_p(V)^{(r)}.
\end{equation}
If $L$ is a $\bar U_K$-module, then for all $\bar u_{r} \bar B_K$-modules $M$ we have the tensor identity
\begin{equation} \label{tensor id}
\bar \Delta'_{p^r} (L \otimes M) \simeq L \otimes \bar \Delta'_{p^r} (M) .
\end{equation}
In particular, (\refeq{twist}) and (\refeq{tensor id}) show what happens, when we apply the  $\bar \Delta'_{p^r}$ to a simple $\bar u_{r} B_K$-module:
\begin{equation} \label{simple}
\bar \Delta'_{p^r} (\tilde L_{p^r}(\lambda)) = \bar L_p(\lambda^0) \otimes \bar  \Delta_p(\lambda^1)^{(r)} \text { for all } \lambda \in X.
\end{equation}
Here $\lambda^0 \in X_{p^r}$ and $\lambda^1 \in X$ are as usual determined by the equation $\lambda = \lambda^0 + p^r \lambda^1$.

Note also that 
\begin{equation} \label{comp}
\bar \Delta'_{p^r} \tilde \Delta_{r} (\lambda) =\bar  \Delta_p(\lambda).
\end{equation}

Applying $\bar \Delta'_{p^r}$ to a composition series for the $\bar u_{r} B_K$-module $\tilde \Delta_{r}(\lambda)$ we obtain by combining (\refeq{simple}) and (\refeq{comp})

\begin{prop} \label{p-filt} Let $\lambda \in X$. Then $\bar \Delta_p(\lambda) $ has a filtration 
$$ 0 = F_0 \subset F_1 \subset \cdots \subset F_n = \bar \Delta_p(\lambda)$$
with $F_i/F_{i-1} \simeq \bar L_p(\mu_i^0) \otimes \bar \Delta_p(\mu_i^1)^{(r)}$ for some $\mu_i \in X$, $ i=1, 2, \cdots , n$.
\end{prop}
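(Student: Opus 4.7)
The plan is to apply the exact functor $\bar\Delta'_{p^r}$ to a composition series of the finite dimensional module $\tilde\Delta_r(\lambda)$ over $\bar u_r\bar B_K$, and read off the subquotients using the identities already collected.

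First, I would note that $\tilde\Delta_r(\lambda)$ is finite dimensional (being isomorphic to $\bar u_r^{-}\otimes\lambda$ as a vector space), so it admits a finite composition series
\[
0 = M_0 \subset M_1 \subset \cdots \subset M_n = \tilde\Delta_r(\lambda)
\]
as a $\bar u_r\bar B_K$-module. By the $\bar u_r\bar B_K$-analogue of the classification of simple $u_qB_q$-modules recalled in Section 3.2, each subquotient is of the form $M_i/M_{i-1}\simeq \tilde L_{p^r}(\mu_i)$ for suitable weights $\mu_i\in X$.

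Next, I would invoke the exactness of $\bar\Delta'_{p^r}=\bar U_K\otimes_{\bar u_r\bar B_K}(-)$. This follows because $\bar U_K$ is free as a right $\bar u_r\bar B_K$-module: via the triangular decomposition $\bar U_K\simeq \bar U_K^{-}\otimes \bar U_K^{0}\otimes \bar U_K^{+}$ one sees that $\bar U_K$ decomposes as $\bar U_K^{-,(r)}\otimes (\bar u_r\bar B_K)$ as a right $\bar u_r\bar B_K$-module, where $\bar U_K^{-,(r)}$ is the subalgebra of $\bar U_K^{-}$ generated by the $f_i^{(p^r m)}$. Applying this exact functor term-by-term to the composition series yields the filtration
\[
0 = F_0 \subset F_1 \subset \cdots \subset F_n = \bar\Delta'_{p^r}\bigl(\tilde\Delta_r(\lambda)\bigr),
\]
with $F_i=\bar\Delta'_{p^r}(M_i)$.

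Finally, I would identify the two ends. By \eqref{comp} the total module is $\bar\Delta'_{p^r}(\tilde\Delta_r(\lambda))=\bar\Delta_p(\lambda)$, and by \eqref{simple} each subquotient is
\[
F_i/F_{i-1} \;\simeq\; \bar\Delta'_{p^r}\bigl(\tilde L_{p^r}(\mu_i)\bigr) \;\simeq\; \bar L_p(\mu_i^0)\otimes \bar\Delta_p(\mu_i^1)^{(r)},
\]
where $\mu_i=\mu_i^0+p^r\mu_i^1$ with $\mu_i^0\in X_{p^r}$ and $\mu_i^1\in X$, giving the claim. The only real subtlety is the exactness of $\bar\Delta'_{p^r}$, which is exactly the point where one must check that $\bar U_K$ is flat (in fact free) over $\bar u_r\bar B_K$; everything else is a direct translation of identities \eqref{twist}–\eqref{comp} already established in the excerpt.
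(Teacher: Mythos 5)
Your proposal is correct and follows essentially the same route as the paper: the paper also obtains the filtration by applying the exact functor $\bar\Delta'_{p^r}$ to a composition series of the baby Verma module $\tilde\Delta_r(\lambda)$ and then identifying the total module and the subquotients via (\ref{comp}) and (\ref{simple}). Your extra remark justifying exactness by the freeness of $\bar U_K$ over $\bar u_r\bar B_K$ is a point the paper leaves as ``clearly'' and is a welcome, correct addition.
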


We call a filtration as the one given in this theorem a $(p^r,\Delta)$-filtration.  If $M$ has such a filtration then we denote by  $(M:L_p(\mu^0) \otimes \bar \Delta_p(\mu^1)^{(r)})$ the number of times a given quotient $\bar L_p(\mu^0) \otimes \bar \Delta_p(\mu^1)^{(r)}$ occurs in this filtration. The way we obtained the filtration of $\bar \Delta_p(\lambda)$ implies
\begin{cor}
Let $\lambda, \mu \in X$. Then $(\bar \Delta_p(\lambda):\bar L_p(\mu^0) \otimes \bar \Delta_p(\mu^1)^{(r)}) = [\tilde \Delta_{r} (\lambda): \tilde L_{p^r}(\mu)]$.
\end{cor}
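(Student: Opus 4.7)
The plan is to unpack the construction behind Proposition 4.4. Pick a composition series
\[
0 = M_0 \subset M_1 \subset \cdots \subset M_n = \tilde\Delta_{r}(\lambda)
\]
of the finite-dimensional $\bar u_r \bar B_K$-module $\tilde\Delta_{r}(\lambda)$, with $M_i/M_{i-1} \simeq \tilde L_{p^r}(\mu_i)$ for some weights $\mu_1,\dots,\mu_n \in X$. By Jordan--H\"older, for each $\mu \in X$ the value $[\tilde\Delta_{r}(\lambda):\tilde L_{p^r}(\mu)]$ equals $\#\{i : \mu_i = \mu\}$, independently of the chosen composition series.

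Next, I would apply the exact functor $\bar\Delta'_{p^r}$ to this series. Using the identity (\ref{comp}) to identify $\bar\Delta'_{p^r}\tilde\Delta_{r}(\lambda)$ with $\bar\Delta_p(\lambda)$, and using (\ref{simple}) to compute
\[
\bar\Delta'_{p^r}(\tilde L_{p^r}(\mu_i)) \simeq \bar L_p(\mu_i^0) \otimes \bar\Delta_p(\mu_i^1)^{(r)},
\]
the image filtration
\[
0 = F_0 \subset F_1 \subset \cdots \subset F_n = \bar\Delta_p(\lambda),\qquad F_i/F_{i-1} \simeq \bar L_p(\mu_i^0) \otimes \bar\Delta_p(\mu_i^1)^{(r)},
\]
is precisely the $(p^r,\Delta)$-filtration used to define the multiplicity $(\bar\Delta_p(\lambda):\bar L_p(\mu^0)\otimes\bar\Delta_p(\mu^1)^{(r)})$. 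The claim therefore reduces to the statement that distinct weights $\mu \in X$ give distinct quotients in the filtration above.

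The only nontrivial check is thus the injectivity of the assignment $\mu \mapsto \bar L_p(\mu^0) \otimes \bar\Delta_p(\mu^1)^{(r)}$ on isomorphism classes. But $\bar L_p(\mu^0)$ has highest weight $\mu^0$ occurring with multiplicity one, $\bar\Delta_p(\mu^1)^{(r)}$ has highest weight $p^r\mu^1$ occurring with multiplicity one, and the decomposition $\mu = \mu^0 + p^r\mu^1$ with $\mu^0 \in X_{p^r}$ is unique; hence $\bar L_p(\mu^0)\otimes\bar\Delta_p(\mu^1)^{(r)}$ has unique highest weight $\mu$ with multiplicity one, and $\mu$ can be recovered from the isomorphism class of the module. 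Counting the indices $i$ for which $F_i/F_{i-1} \simeq \bar L_p(\mu^0)\otimes\bar\Delta_p(\mu^1)^{(r)}$ therefore equals $\#\{i : \mu_i = \mu\} = [\tilde\Delta_{r}(\lambda):\tilde L_{p^r}(\mu)]$, as desired. There is no serious obstacle here: once Proposition 4.4 and its construction are in place, the corollary is pure bookkeeping, and the only place any argument is needed is the highest-weight analysis just described.
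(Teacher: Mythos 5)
Your argument is correct and is essentially the paper's own proof: the corollary is obtained exactly by applying the exact functor $\bar\Delta'_{p^r}$ to a composition series of $\tilde\Delta_{r}(\lambda)$ and invoking (\ref{simple}) and (\ref{comp}), which the paper summarizes as ``the way we obtained the filtration implies.'' Your extra check that $\mu$ is recoverable from the isomorphism class of $\bar L_p(\mu^0)\otimes\bar\Delta_p(\mu^1)^{(r)}$ via its unique highest weight is a detail the paper leaves implicit, and it is argued correctly.
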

Here we use the notation $[M:\tilde L_{p^r}(\mu)]$ for the composition factor multiplicity of $\tilde L_{p^r}(\mu)$ in  a $\bar u_{r} \bar B_K$-module $M$.

Employing now also Theorem \ref{SLP_r} we get the following linkage principle for $(p^r,\Delta)$-filtrations of Verma modules in $\bar {\mathcal O}_p$.

\begin{thm} \label{SLP_p} Let $\lambda, \mu \in X$ and $r \geq 1$. If $(\bar \Delta_p(\lambda):\bar L_{p}(\mu^0) \otimes \bar \Delta_p(\mu^1)^{(r)}) \neq 0$ then $\mu \uparrow \lambda$.
\end{thm}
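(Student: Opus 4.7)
The plan is essentially a direct combination of two results already in hand, so the proof should be very short once the bookkeeping is set up. The content has really been distributed between the preceding Proposition \ref{p-filt} and its Corollary on one side, and Theorem \ref{SLP_r} on the other.

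First I would translate the hypothesis to the side of baby Verma modules. By the Corollary immediately preceding the statement, we have the identity
\[
\bigl(\bar \Delta_p(\lambda) : \bar L_p(\mu^0) \otimes \bar \Delta_p(\mu^1)^{(r)}\bigr) = \bigl[\tilde \Delta_r(\lambda) : \tilde L_{p^r}(\mu)\bigr],
\]
obtained by applying the exact functor $\bar \Delta'_{p^r}$ to a composition series of the $\bar u_r \bar B_K$-module $\tilde \Delta_r(\lambda)$ and using the identities \eqref{simple} and \eqref{comp}. So the assumption that the left-hand side is nonzero is equivalent to the assertion that $\tilde L_{p^r}(\mu)$ appears as a composition factor of the baby Verma module $\tilde \Delta_r(\lambda)$.

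Once this is done, Theorem \ref{SLP_r} (the strong linkage principle for baby Verma modules, taken from \cite{RAG}, II.9.11) immediately yields $\mu \uparrow \lambda$ in the sense of Definition \ref{defSL}, completing the proof. Nothing further is required; in particular, one does not need to inspect the individual $\mu_i$ arising in the $(p^r,\Delta)$-filtration of Proposition \ref{p-filt}, because the Corollary repackages all occurrences of the quotient $\bar L_p(\mu^0) \otimes \bar \Delta_p(\mu^1)^{(r)}$ into a single composition factor of the $\bar u_r \bar B_K$-module $\tilde \Delta_r(\lambda)$, to which Theorem \ref{SLP_r} applies verbatim.

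Since everything nontrivial has been packed into the cited Theorem \ref{SLP_r} and the exactness/Frobenius properties \eqref{twist}--\eqref{simple} of $\bar \Delta'_{p^r}$, there is no real obstacle here. The only care needed is to note that the definition of $\mu^0, \mu^1$ via $\mu = \mu^0 + p^r \mu^1$ with $\mu^0 \in X_{p^r}$ is the same convention used in \eqref{simple}, so that the matching between the filtration quotients of $\bar \Delta_p(\lambda)$ and the simple $\bar u_r \bar B_K$-modules labelling composition factors of $\tilde \Delta_r(\lambda)$ is bijective.
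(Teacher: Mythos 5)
Your proposal is correct and is essentially the paper's own argument: the paper also obtains Theorem \ref{SLP_p} by combining the multiplicity identity $(\bar \Delta_p(\lambda):\bar L_p(\mu^0) \otimes \bar \Delta_p(\mu^1)^{(r)}) = [\tilde \Delta_{r}(\lambda): \tilde L_{p^r}(\mu)]$ (the corollary derived from applying the exact functor $\bar \Delta'_{p^r}$ to a composition series of $\tilde \Delta_{r}(\lambda)$) with the strong linkage principle for baby Verma modules, Theorem \ref{SLP_r}. No gaps; your remark that the convention $\mu = \mu^0 + p^r\mu^1$, $\mu^0 \in X_{p^r}$, matches the one in \eqref{simple} is exactly the bookkeeping needed.
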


\subsection{Linkage principles for Verma modules in $\mathcal O_q$}

We assume in this section that $q \in K$ has odd order $\ell > 1$ (and $\ell$ is not divisible by $3$ if $R$ contains a component of type $G_2$) and that $p > 0$ (the $p=0$ case was treated in \cite{AM}, Section 3).  We can then  argue just as in Section 4.1:

Let $M$ be a finite dimensional $u_qB_q$-module. As always we assume that $M$ is also a weight module. Then we define
$$\Delta'_q(M) = U_q \otimes_{u_qB_q} M.$$
If $M = N^{[\ell]}$ for some $\bar U_K$-module $N$ then we have, see \cite{AW} Lemma 3.2,
\begin{equation} \label{twist_q}
\Delta'_q(N^{[\ell]}) \simeq \bar \Delta_p(N)^{[\ell]}.
\end{equation}
If $L$ is a $U_q$-module, then the tensor identity gives
\begin{equation} \label{tensor id_q}
 \Delta'_q(L \otimes M) \simeq L \otimes \Delta'_q(M).
\end{equation}
Recall from Section 3.2 the definition of baby-Verma modules. Transitivity of induction gives
\begin{equation} \label{trans_q}
 \Delta'_q (\tilde \Delta_\ell (\lambda)) \simeq \Delta_q(\lambda) \text { for all } \lambda \in X.
 \end{equation}

Consider the case where $M$ is irreducible, i.e.  $M = \tilde L_\ell(\lambda)$ for some $\lambda$ in $X$. As usual we write $\lambda = \lambda^0 + \ell \lambda^1$ with $\lambda^0 \in X_\ell$ and $\lambda^1 \in X$. Then $\tilde L_\ell(\lambda) = L_q(\lambda^0) \otimes \ell \lambda^1$. So by (\refeq{tensor id_q}) and (\refeq{twist_q}) we get
\[
\Delta'_q(\tilde L_\ell(\lambda)) \simeq L_q(\lambda^0) \otimes \bar \Delta_p (\lambda^1)^{[\ell]}
\]
As $\tilde L_\ell(\lambda)$ is a quotient of $\tilde \Delta_\ell(\lambda)$ we have by (\refeq{trans_q})  that $\Delta'_q(\tilde L_\ell(\lambda))$ is a quotient of $\Delta_q(\lambda)$. It follows that $\Delta'_q(M) \in \mathcal O_q $ for all $M$.

Combining the above we obtain
\begin{prop} \label{ell filt}
Let $\lambda \in X$. Then $\Delta_q(\lambda)$ has a finite filtration in $\mathcal O_q$
$$ 0 = F_0 \subset F_1 \subset \cdots \subset F_r = \Delta_q(\lambda)$$
with $F_i/F_{i-1} \simeq  L_q(\mu_i^0) \otimes  \bar \Delta_p(\mu_i^1)^{[\ell]}$ for some $\mu_i \in X$, $ i=1, 2, \cdots , n$.
\end{prop}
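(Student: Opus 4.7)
The plan is to mimic directly the argument that gave Proposition \ref{p-filt} in the modular setting, replacing $\bar u_r \bar B_K$ by $u_qB_q$ and the functor $\bar\Delta'_{p^r}$ by $\Delta'_q$. All the ingredients are already in place: the identifications \eqref{twist_q}, \eqref{tensor id_q}, \eqref{trans_q}, and the computation of $\Delta'_q$ on a simple $u_qB_q$-module displayed just above the statement.

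First I would note that the baby Verma module $\tilde\Delta_\ell(\lambda)$ is a finite-dimensional $u_qB_q$-module (of dimension $\ell^N$), and in particular it admits a finite composition series
\begin{equation*}
0 = M_0 \subset M_1 \subset \cdots \subset M_r = \tilde\Delta_\ell(\lambda)
\end{equation*}
with $M_i/M_{i-1} \simeq \tilde L_\ell(\mu_i)$ for suitable weights $\mu_i \in X$. Next, the functor $\Delta'_q = U_q \otimes_{u_qB_q} (-)$ is exact: by the triangular decomposition $U_q \simeq U_q^- \otimes U_q^0 \otimes U_q^+$ together with $u_qB_q \simeq u_q^- \otimes U_q^0 \otimes U_q^+$, the algebra $U_q$ is free as a right $u_qB_q$-module (a PBW basis of $U_q^-$ over $u_q^-$ using the divided powers $F_\beta^{(n\ell)}$ provides an explicit basis). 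Applying $\Delta'_q$ to the composition series above yields a finite filtration
\begin{equation*}
0 = F_0 \subset F_1 \subset \cdots \subset F_r = \Delta'_q(\tilde\Delta_\ell(\lambda)) \simeq \Delta_q(\lambda),
\end{equation*}
where the last isomorphism is \eqref{trans_q}, and $F_i/F_{i-1} \simeq \Delta'_q(\tilde L_\ell(\mu_i))$.

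Finally, the calculation carried out just before the proposition shows $\Delta'_q(\tilde L_\ell(\mu_i)) \simeq L_q(\mu_i^0) \otimes \bar\Delta_p(\mu_i^1)^{[\ell]}$ where $\mu_i = \mu_i^0 + \ell \mu_i^1$ with $\mu_i^0 \in X_\ell$. Each $F_i$ lies in $\mathcal O_q$, being a submodule of $\Delta_q(\lambda)\in \mathcal O_q$, so the filtration is a filtration in $\mathcal O_q$, as required. The only potential obstacle is the freeness/exactness claim for $\Delta'_q$; but this is a routine consequence of the PBW theorem for $U_q$, completely parallel to the modular case used in the proof of Proposition \ref{p-filt}, so I do not expect any essential difficulty.
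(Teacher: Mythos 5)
Your proof is essentially identical to the paper's (implicit) argument: apply the exact functor $\Delta'_q$ to a composition series of the finite-dimensional baby Verma module $\tilde\Delta_\ell(\lambda)$ and identify the quotients via the displayed computation of $\Delta'_q(\tilde L_\ell(\mu))$ together with \eqref{trans_q}. The extra remark justifying exactness of $\Delta'_q$ by freeness of $U_q$ over $u_qB_q$ via the PBW basis is correct and is the same routine fact the paper invokes (without spelling it out) in the modular case for $\bar\Delta'_{p^r}$.
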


A filtration like the one in this proposition is called an $(\ell,\Delta)$-filtration. Note that  for any finite dimensional $u_qB_q$-module $M$  and any $\mu \in X$  the number of times the quotient $L_q(\mu^0) \otimes \bar \Delta_p(\mu^1)^{[\ell]}$ occurs in an $(\ell,\Delta)$-filtration of $\Delta'_q(M)$ equals the multiplicity $[M:\tilde L_\ell(\mu)]$ of the composition factor $\tilde L_\ell(\mu)$ in $M$. Taking $M = \tilde \Delta_\ell (\lambda)$ we obtain    
\begin{cor}
Let $\lambda, \mu \in X$. Then
$$ (\Delta_q(\lambda):L_q(\mu^0) \otimes \bar \Delta_p(\mu^1)^{[\ell]} ) = [\tilde \Delta_\ell(\lambda):\tilde L_\ell(\mu)]$$
for all $\mu \in X$. 
\end{cor}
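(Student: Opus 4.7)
The plan is to deduce this identity as a direct consequence of the construction underlying Proposition~\ref{ell filt}. First, I would pick any composition series
\[
0 = M_0 \subset M_1 \subset \cdots \subset M_s = \tilde\Delta_\ell(\lambda)
\]
of the finite dimensional $u_qB_q$-module $\tilde\Delta_\ell(\lambda)$, with $M_i/M_{i-1}\simeq \tilde L_\ell(\mu_i)$ for suitable $\mu_i \in X$. By the Jordan--H\"older theorem, the number of indices $i$ with $\mu_i = \mu$ is precisely $[\tilde\Delta_\ell(\lambda):\tilde L_\ell(\mu)]$.

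Next, I would apply the induction functor $\Delta'_q = U_q\otimes_{u_qB_q}(-)$. This functor is exact because $U_q$ is free as a right $u_qB_q$-module, which follows from the quantum PBW theorem (one obtains a basis of $U_q^-$ over $u_q^-$ by picking the PBW monomials whose exponents are divisible by $\ell$ on each root vector). By~(\ref{trans_q}) it sends $\tilde\Delta_\ell(\lambda)$ to $\Delta_q(\lambda)$, and by the computation just before Proposition~\ref{ell filt} (combining~(\ref{tensor id_q}) and~(\ref{twist_q})) it sends each simple subquotient $\tilde L_\ell(\mu_i)$ to $L_q(\mu_i^0)\otimes \bar\Delta_p(\mu_i^1)^{[\ell]}$. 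Applying $\Delta'_q$ to the chosen composition series therefore yields an explicit $(\ell,\Delta)$-filtration of $\Delta_q(\lambda)$ in which the quotient $L_q(\mu^0)\otimes \bar\Delta_p(\mu^1)^{[\ell]}$ occurs exactly $[\tilde\Delta_\ell(\lambda):\tilde L_\ell(\mu)]$ times.

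It remains to check that the multiplicity $(\Delta_q(\lambda):L_q(\mu^0)\otimes \bar\Delta_p(\mu^1)^{[\ell]})$ is independent of the chosen $(\ell,\Delta)$-filtration, and this is the one step that requires genuine care. The key point is that $L_q(\mu^0)\otimes \bar\Delta_p(\mu^1)^{[\ell]}$ has $\mu = \mu^0 + \ell\mu^1$ as its unique highest weight, of multiplicity one. Consequently the characters $\Char\bigl(L_q(\mu^0)\otimes \bar\Delta_p(\mu^1)^{[\ell]}\bigr)$ form a triangular system indexed by $\mu$ in the usual completion of $\Z[X]$ (permissible because weight spaces of $\Delta_q(\lambda)$ are finite dimensional and the set of weights is bounded above). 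Taking characters of any $(\ell,\Delta)$-filtration of $\Delta_q(\lambda)$ therefore expresses $\Char \Delta_q(\lambda)$ as a \emph{unique} nonnegative integer combination of these characters, so the multiplicities are well-defined. Combined with the explicit filtration constructed in the previous paragraph, this yields the stated equality; the well-definedness is the only real obstacle, everything else being a direct translation of the $u_qB_q$-composition series of $\tilde\Delta_\ell(\lambda)$ through the exact functor $\Delta'_q$.
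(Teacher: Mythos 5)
Your proposal is correct and follows essentially the same route as the paper: apply the exact induction functor $\Delta'_q$ to a composition series of $\tilde\Delta_\ell(\lambda)$, use (\ref{trans_q}) to identify the result with $\Delta_q(\lambda)$, and use (\ref{tensor id_q}) together with (\ref{twist_q}) to identify the images of the simple subquotients. Your additional check that the multiplicities $(\Delta_q(\lambda):L_q(\mu^0)\otimes\bar\Delta_p(\mu^1)^{[\ell]})$ are independent of the chosen $(\ell,\Delta)$-filtration, via the triangular system of characters with unique highest weight $\mu$ of multiplicity one, is a point the paper leaves implicit, and it is a welcome clarification rather than a departure.
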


Just like in the modular case (Theorem \ref{SLP_p} with $ r = 1$) we have

\begin{thm} Let $\lambda, \mu \in X$. If $\tilde L_{\ell}(\mu)$ is a composition factor of $\tilde \Delta_{\ell}(\lambda) $ then $\mu \uparrow \lambda$.
\end{thm}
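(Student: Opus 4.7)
The plan is to mirror the modular argument for Theorem \ref{SLP_r} (compare \cite{RAG}, II.9.11): construct a Jantzen-type filtration on $\tilde\Delta_\ell(\lambda)$, derive a sum formula involving affine reflections in $W_\ell$, and conclude by induction on the partial order on $X$.

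For the deformation step I would work over the localization $\mathcal{A}$ of $A = \Z[v,v^{-1}]$ at the prime ideal $(\Phi_\ell)$, together with an auxiliary parameter encoding a generic shift of $\lambda$, in complete analogy with the classical construction (\cite{RAG}, II.8.19). The $\mathcal{A}$-form $\tilde\Delta_{\mathcal{A}}(\lambda)$ carries a contravariant bilinear form built from the Shapovalov-type pairing between $u_q^-$ and $u_q^+$; its determinant on each weight space factors through the PBW basis into linear terms, one for each pair $(\beta,m)$ with $\beta\in R^+$ and $m\in\Z_{>0}$, whose specialization at $v=q$ vanishes precisely when $\lambda$ lies on the affine hyperplane fixed by $s_{\beta,m}\in W_\ell$. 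Reducing modulo a uniformizer then yields the Jantzen filtration
\[
\tilde\Delta_\ell(\lambda) = \tilde\Delta^0_\ell(\lambda)\supseteq \tilde\Delta^1_\ell(\lambda)\supseteq \tilde\Delta^2_\ell(\lambda)\supseteq\cdots
\]
with $\tilde\Delta^0_\ell(\lambda)/\tilde\Delta^1_\ell(\lambda)\simeq \tilde L_\ell(\lambda)$, together with a sum formula of the shape
\[
\sum_{i\geq 1}\Char\tilde\Delta^i_\ell(\lambda) \;=\; \sum_{\beta\in R^+}\sum_{\substack{m>0 \\ s_{\beta,m}\cdot\lambda<\lambda}}\Char\tilde\Delta_\ell(s_{\beta,m}\cdot\lambda).
\]

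The inductive step is then forced by the usual downward argument on the partial order, which is legitimate because only finitely many $s_{\beta,m}\cdot\lambda$ with $s_{\beta,m}\cdot\lambda < \lambda$ contribute to any fixed weight space of $\tilde\Delta_\ell(\lambda)$. Any composition factor $\tilde L_\ell(\mu)$ other than the head $\tilde L_\ell(\lambda)$ lies in $\tilde\Delta^1_\ell(\lambda)$, hence appears as a composition factor in some $\tilde\Delta_\ell(\nu)$ with $\nu=s_{\beta,m}\cdot\lambda<\lambda$. By induction $\mu\uparrow\nu$, and $\nu\uparrow\lambda$ directly from Definition \ref{defSL} (now read with $W_\ell$ in place of $W_p$), so $\mu\uparrow\lambda$.

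The main obstacle is establishing the sum formula at the quantum level: one must verify that the Shapovalov-type determinant on $\tilde\Delta_{\mathcal{A}}(\lambda)$ factors correctly over $\mathcal{A}$ and that the orders of vanishing at the uniformizer isolate exactly the affine reflections in $W_\ell$. This is the quantum analog of the classical computation in \cite{RAG}, II.8, and the necessary determinant formula for the small quantum group is essentially already present in the machinery used in \cite{APW} and \cite{An03}; the oddness of $\ell$ (and coprimality to $3$ in type $G_2$) enters precisely to make the linear factors behave uniformly across root lengths and to guarantee that the quantum integers $[m]_{d_i}$ vanish in the expected pattern.
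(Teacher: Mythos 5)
The paper itself gives no argument for this theorem: it is simply recorded as the quantum analogue of the modular statement (Theorem \ref{SLP_r}), which is quoted from \cite{RAG}, II.9.11 and \cite{Doty}. Your plan — a Jantzen-type filtration of $\tilde\Delta_\ell(\lambda)$, a sum formula, then induction — is the standard route in the modular case, and your final inductive step is sound: every composition factor of $\tilde\Delta^1_\ell(\lambda)$ occurs in some $\tilde\Delta_\ell(\nu)$ on the right-hand side, the relevant highest weights all lie in the finite box $\lambda-\{\sum_{\beta>0}n_\beta\beta\mid 0\le n_\beta<\ell\}$, so induction terminates, \emph{provided} one has a sum formula whose right-hand side is a nonnegative combination of characters $\Char\tilde\Delta_\ell(\nu)$ with $\nu<\lambda$ and $\nu\uparrow\lambda$. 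The gap is that this key input is both misstated and not actually established.

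First, the deformation setup cannot work as described. Localizing $A=\Z[v,v^{-1}]$ at $(\Phi_\ell)$ gives a discrete valuation ring whose residue field has characteristic $0$; reducing modulo its uniformizer lands in the characteristic-zero root-of-unity situation and never reaches $\tilde\Delta_\ell(\lambda)$ over $K$ with $\mathrm{char}\,K=p$. What is needed is to keep $q\in K$ fixed and deform the highest weight (the character of $U_q^0$) over a DVR with residue field $K$, say $K[T]_{(T)}$, and to compute the Shapovalov-type determinant for $u_q^-$ in that setting; this is the entire content of the theorem, and it is not ``essentially already present'' in \cite{APW} or \cite{An03}, which concern Weyl and dual Weyl modules for the big quantum group $U_q$, not baby Verma modules for $u_qB_q$ over a field of characteristic $p$ (it is carried out in the quantum $G_1T$-style literature, e.g.\ Andersen--Jantzen--Soergel). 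Second, the sum formula you propose has the shape of the category-$\mathcal O$ formula for ordinary Verma modules and is false for baby Vermas: if $\beta$ is a simple root with $\langle\lambda+\rho,\beta^\vee\rangle\ge 2\ell$, then the term $\Char\tilde\Delta_\ell(s_{\beta,1}\cdot\lambda)$ on your right-hand side has highest weight $\lambda-(\langle\lambda+\rho,\beta^\vee\rangle-\ell)\beta$, which is not even a weight of $\tilde\Delta_\ell(\lambda)$, whereas the left-hand side is supported on weights of $\tilde\Delta_\ell(\lambda)$; all your terms are nonnegative, so nothing can cancel. Because of the periodicity $\tilde\Delta_\ell(\lambda+\ell\mu)\simeq\tilde\Delta_\ell(\lambda)\otimes\ell\mu$, the correct right-hand side is folded modulo $\ell$: each positive root $\beta$ contributes at most one term, essentially $\Char\tilde\Delta_\ell(\lambda-m_\beta\beta)$ with $0<m_\beta<\ell$ and $m_\beta\equiv\langle\lambda+\rho,\beta^\vee\rangle\pmod{\ell}$ (no contribution when $\ell$ divides $\langle\lambda+\rho,\beta^\vee\rangle$), and such a weight is indeed of the form $s_{\beta,n}\cdot\lambda<\lambda$, so the strong-linkage deduction then goes through. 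As written, however, your argument assumes rather than proves its only nontrivial ingredient; either develop the deformed $u_qB_q$-theory over $K[T]_{(T)}$ in detail, or replace this part by a citation to the literature where it is done.
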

Of course the strong linkage relation here is with respect to $\ell$ instead of $p$.

This gives

\begin{cor}  Let $\lambda, \mu \in X$. If $(\Delta_q(\lambda):L_q(\mu^0) \otimes \bar \Delta_p (\mu^1)^{[\ell]}) \neq 0$ then $\mu \uparrow \lambda$.
\end{cor}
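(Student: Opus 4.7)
The plan is to combine the two results stated immediately before this corollary. The preceding corollary provides the equality
$$ \bigl(\Delta_q(\lambda):L_q(\mu^0) \otimes \bar\Delta_p(\mu^1)^{[\ell]}\bigr) = [\tilde\Delta_\ell(\lambda):\tilde L_\ell(\mu)], $$
which was obtained by applying the exact functor $\Delta'_q$ to a composition series of the finite dimensional $u_qB_q$-module $\tilde\Delta_\ell(\lambda)$ and identifying the successive quotients using \eqref{twist_q}, \eqref{tensor id_q} and \eqref{trans_q}. So the nonvanishing hypothesis translates into the statement that $\tilde L_\ell(\mu)$ occurs as a composition factor of the baby Verma module $\tilde\Delta_\ell(\lambda)$.

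The preceding theorem, namely the strong linkage principle for baby Verma modules over $u_qB_q$, then delivers $\mu \uparrow \lambda$ (with respect to the dot-action of the affine Weyl group $W_\ell$). Thus the whole content of the corollary is an immediate combination of these two facts, and the only step that requires substance is the baby Verma linkage theorem itself, which is the quantum analogue of Theorem \ref{SLP_r} at $r=1$. The hard part, if any, has already been absorbed into that theorem; its proof proceeds along the standard route of decomposing the category of finite dimensional $u_q B_q$-modules into blocks using the action of the Harish-Chandra-type center on $u_q$ (cf.\ the arguments of \cite{An03} leading to the strong linkage principle for quantum groups at roots of unity). Once this blockwise decomposition is in hand, the linkage of composition factors of $\tilde\Delta_\ell(\lambda)$ to $\lambda$ is forced, and the corollary follows with no further computation.
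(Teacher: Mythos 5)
Your proof is correct and follows essentially the same route as the paper: the nonvanishing of $(\Delta_q(\lambda):L_q(\mu^0) \otimes \bar\Delta_p(\mu^1)^{[\ell]})$ is translated via the preceding multiplicity identity into $[\tilde\Delta_\ell(\lambda):\tilde L_\ell(\mu)] \neq 0$, and then the strong linkage principle for baby Verma modules (with $\uparrow$ taken with respect to $\ell$) gives $\mu \uparrow \lambda$ at once. The paper takes that baby Verma linkage theorem as given, so your closing sketch of its proof is extra background rather than part of the argument needed here.
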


Consider now $M \in \mathcal O_q$ and assume $M$ has an $(\ell,\Delta)$-filtration. For each quotient $L_q(\mu^0) \otimes \bar \Delta_p(\mu^1)^{[\ell]}$, which occurs in this filtration we choose now a $(p^r,\Delta)$-filtration of $\bar \Delta_p(\mu^1)$. This leads to a refined filtration of $M$ in which quotients have the form $L_q(\mu) \otimes \bar L_p(\nu)^{[\ell]} \otimes (\bar \Delta_p(\eta)^{(r)})^{[\ell]}$ with $ \mu \in X_\ell, \nu \in X_{p^r}$, and $\eta \in X$.  We call such a filtration an $(\ell,p^r,\Delta)$-filtration of $M$. 

Combining Corollary 4.3 and Corollary 4.6 we then get (in the obvious notation)

\begin{cor} \label{ell-p filt} Let $\lambda, \eta \in X$ and $\mu \in X_\ell, \nu \in X_{p^r}$. The Verma module $\Delta_q(\lambda)$ has an $(\ell,p^r,\Delta)$-filtration in which the multiplicities are given by
$$(\Delta_q(\lambda):L_q(\mu) \otimes \bar L_p(\nu)^{[\ell]} \otimes (\bar \Delta_p(\eta)^{(r)})^{[\ell]}) = \sum_{\zeta \in X} [\tilde \Delta_\ell(\lambda):\tilde L_q(\mu + \ell \zeta)][\tilde \Delta_{r}(\zeta):\tilde L_{p^r}(\nu + p^r \eta)].$$
(Note that this sum is finite.)
\end{cor}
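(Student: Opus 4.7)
The plan is a straightforward two-step refinement: begin with the $(\ell,\Delta)$-filtration of $\Delta_q(\lambda)$ produced in Proposition~\ref{ell filt}, and then refine each of its subquotients by applying the exact functor $L_q(\mu)\otimes((-)^{[\ell]})$ to a $(p^r,\Delta)$-filtration of the corresponding $\bar\Delta_p$-factor supplied by Proposition~\ref{p-filt}. The two multiplicity corollaries (the one immediately after Proposition~\ref{p-filt}, and the one immediately after Proposition~\ref{ell filt}) then let me convert each step into a count of composition factor multiplicities in baby Verma modules, and the claimed formula emerges from a single multiplicative bookkeeping.

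In detail, Proposition~\ref{ell filt} with its multiplicity corollary gives a filtration of $\Delta_q(\lambda)$ whose subquotients have the form $L_q(\mu)\otimes\bar\Delta_p(\zeta)^{[\ell]}$ for $\mu\in X_\ell$ and $\zeta\in X$, and
\[
\bigl(\Delta_q(\lambda):L_q(\mu)\otimes\bar\Delta_p(\zeta)^{[\ell]}\bigr)=[\tilde\Delta_\ell(\lambda):\tilde L_\ell(\mu+\ell\zeta)].
\]
By Proposition~\ref{p-filt} and its multiplicity corollary, each $\bar\Delta_p(\zeta)$ in turn has a $(p^r,\Delta)$-filtration in which $\bar L_p(\nu)\otimes\bar\Delta_p(\eta)^{(r)}$ (for $\nu\in X_{p^r}$, $\eta\in X$) occurs with multiplicity $[\tilde\Delta_r(\zeta):\tilde L_{p^r}(\nu+p^r\eta)]$. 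Applying $L_q(\mu)\otimes((-)^{[\ell]})$ to such a $(p^r,\Delta)$-filtration and splicing the result into the outer $(\ell,\Delta)$-filtration of $\Delta_q(\lambda)$ produces an $(\ell,p^r,\Delta)$-filtration whose subquotients are precisely of the form $L_q(\mu)\otimes\bar L_p(\nu)^{[\ell]}\otimes(\bar\Delta_p(\eta)^{(r)})^{[\ell]}$.

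A fixed such subquotient can arise only by first splitting off an outer $L_q(\mu)\otimes\bar\Delta_p(\zeta)^{[\ell]}$-layer (with the same $\mu\in X_\ell$ and some $\zeta\in X$) and then extracting a layer of the prescribed $(\nu,\eta)$-type from a $(p^r,\Delta)$-filtration of $\bar\Delta_p(\zeta)$. Its total multiplicity in the refined filtration is therefore
\[
\sum_{\zeta\in X}\bigl(\Delta_q(\lambda):L_q(\mu)\otimes\bar\Delta_p(\zeta)^{[\ell]}\bigr)\cdot\bigl(\bar\Delta_p(\zeta):\bar L_p(\nu)\otimes\bar\Delta_p(\eta)^{(r)}\bigr),
\]
and substituting the two multiplicity formulas above yields precisely the claimed expression. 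Finiteness of the sum is automatic, since $\tilde\Delta_\ell(\lambda)$ is finite dimensional and so only finitely many $\zeta\in X$ make the first factor nonzero.

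There is no real obstacle in this argument; it is essentially formal. The only point that demands any care is keeping the two layers of $X = X_\ell + \ell X = X_{p^r} + p^r X$ decompositions straight, so that the outer parameter $\mu$ in the $(\ell,\Delta)$-filtration is identified with the $\ell$-restricted part $\mu^0$ of the refined parameter, and similarly $\nu$ with the $p^r$-restricted part of $\zeta$ and $\eta$ with its higher part. Exactness of $L_q(\mu)\otimes((-)^{[\ell]})$, together with the definition of the Frobenius twists, guarantees that the splicing step preserves filtration types, so the refinement is genuinely an $(\ell,p^r,\Delta)$-filtration.
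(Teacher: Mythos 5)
Your argument is correct and is essentially the paper's own: refine the $(\ell,\Delta)$-filtration of Proposition~\ref{ell filt} by applying $L_q(\mu)\otimes((-)^{[\ell]})$ to $(p^r,\Delta)$-filtrations from Proposition~\ref{p-filt}, then combine the two multiplicity corollaries and sum over $\zeta$. The paper compresses this into ``combining Corollary 4.3 and Corollary 4.6,'' so your write-up just supplies the same bookkeeping in detail (note the paper's $\tilde L_q(\mu+\ell\zeta)$ is a typo for $\tilde L_\ell(\mu+\ell\zeta)$, which you correctly use).
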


\begin{rem} \label{dual}
In the ordinary category $\mathcal O$ as well as in the characteristic zero category $\mathcal O_q$ we have duality functors, see \cite{Hu}, Section 3.3, respectively \cite{AM}, Section 3.8. The very same recipe gives duality functors $D_p$ and $D_q$ on $\bar {\mathcal O}_p$ and $\mathcal O_q$. These functors preserve characters  so we have in particular $ D_p(\bar L_p(\lambda)) \simeq \bar L_p(\lambda)$  and $D_q(L_q(\lambda)) \simeq  L_q(\lambda)$ for all $\lambda \in X$.

We shall denote the dual Verma modules by $\overline \nabla_p(\lambda) = D_p(\bar \Delta_p(\lambda)) $ and $\nabla_q(\lambda) = D_q(\Delta_q(\lambda))$. Then all the above results have dual analogues. In particular, $\bar \nabla_p(\lambda)$ has for each $r$ a $(p^r,\nabla)$-filtration, and $\nabla_q(\lambda)$ has an $(\ell,\nabla)$-filtration satisfying the dual analogues of Theorem 4.4 and Corollary 4.8.

\end{rem}
%\vfill \eject

\section{$\mathfrak g = \mathfrak {sl}_2$}\label{sec sl_2}
In this section we illustrate our results in the previous sections in  the $\mathfrak {sl}_2$-case.

\subsection{The simple modules in $\bar {\mathcal O}_p = \bar {\mathcal O}_p(\mathfrak {sl}_2$)} \label{sl_2 p}
We have $X = \Z$ and $X^+ = \Z_{\geq 0}$. So the simple modules in $\bar {\mathcal O}_p$ are $\bar L_p(n)$ with $n \in \Z$, and among these the ones with $n \geq 0$ are the finite dimensional ones. 

Let $V$ denote the $2$-dimensional natural module for $\mathfrak {sl}_2$. Then it is well known that $\bar L_p(n) = S^nV$ for $n = 0, 1, \cdots , p-1$,
and we get 
\begin{equation}
\Char \bar L_p(n) = e^n + e^{n-2} + \cdots + e^{-n} \text { for all } n = 0, 1, \cdots , p-1.
\end{equation} 

Now let $n \geq 0$ be arbitrary and write $n = n_0 + n_1 p + \cdots + n_r p^r$ with $0 \leq n_i \leq p-1$ for all $i$. Then Theorem {\ref{STP_p}} says
$\bar L_p(n) \simeq \bar L_p(n_0) \otimes \bar L_p(n_1)^{(1)} \otimes \cdots \otimes \bar L_p(n_r)^{(r)}$. This implies $\dim \bar L_p(n) = (n_0+1)(n_1+1)\cdots (n_r+1)$ and
\begin{equation} \label{fd simple}
\Char \bar L_p(n) = \prod_{i=0}^r (e^{n_i} + e^{n_i - 2} + \cdots  + e^{-n_i})^{(i)}.
\end{equation} 

Now consider $n=-1$. An easy direct computation (alternatively use the arguments for (5.5) in the following section)  shows that $\bar L_p(-1) = \bar \Delta_p(-1)$. Hence
\begin{equation}\label{-1}
\Char \bar L_p(-1) = e^{-1} +  e^{-3} +  e^{-5} + \cdots.
\end{equation} 
In this case Proposition \ref{p-filt} gives $\bar \Delta_p(-1) \simeq \overline {St}_{r} \otimes \bar \Delta_p(-1)^{(r)}$ for all $r >0$.

Finally, consider $n < -1$. We choose $r$ so big that $p^r > -n$. Then $n$ has p-adic expansion $n = (p^r + n) + (-1)p^r$ and Theorem \ref{STP_p} implies $\bar L_p(n) \simeq \bar L(p^r+n) \otimes \bar L_p(-1)^{(r)}$. Therefore we get
\begin{equation}
\Char \bar L_p(n) = \Char \bar L_p(p^r +n) (\Char \bar L_p(-1))^{(r)}.
\end{equation}
Note that (\refeq{fd simple}) and (\refeq{-1}) determine the two characters on the right hand side.

\subsection{The Verma modules in $\bar {\mathcal O}_p$}

Let $n \in \Z$. Proposition \ref{p-filt} gives
\begin{enumerate}
\item If $n \equiv -1 (\mod p)$ then $\bar \Delta_p(n) \simeq \overline {St}_1 \otimes \bar \Delta_p(\frac{n+1}{p} -1)^{(1)}$,
\item if $n \not \equiv -1 (\mod p)$ we write $n= n_0 + n_1 p$ with $0 \leq n_0 < p-1$ and get a short exact sequence
$$ 0 \rightarrow \bar L_p(p-n_0-2) \otimes \bar \Delta_p(n_1-1)^{(1)} \rightarrow \bar \Delta_p(n) \rightarrow \bar L_p(n_0) \otimes \bar \Delta_p(n_1)^{(1)} \rightarrow 0.$$
\end{enumerate}
This allows us to determine the structure of $\bar \Delta_p(n)$ for all $n$. For instance, we get
\begin{equation}
\bar \Delta_p(-1) = \bar L_p(-1).
\end{equation}
In fact, for $n=-1$ the isomorphism in (1) says $\bar \Delta_p(-1) \simeq \overline {St}_1 \otimes \bar \Delta_p(-1)^{(1)}$.
 Repeated use of (1) on the second tensor factor here leads to $\bar \Delta_p(-1) \simeq \overline {St}_r \otimes \bar \Delta_p(-1)^{(r)}$. 
 On the other hand, Steinberg's tensor product theorem, cf. Corollary 3.8, gives 
 $\bar L_p(-1) \simeq St_r \otimes \bar L_p(-1)^{(r)}$. 
 This implies that the weight spaces of $\bar \Delta_p(-1)$ and $\bar L_p(-1)$ coincide.

When $n=0$ the short exact sequence (2) reads (noting that by Steinberg's tensor product theorem $\bar L_p(p-2) \otimes  \bar L_p(-1)^{(1)} \simeq \bar L_p(-2)$)
\begin{equation} 0 \rightarrow \bar L_p(-2) \rightarrow \bar \Delta_p(0) \rightarrow \bar \Delta_p(0)^{(1)} \rightarrow 0.
\end{equation}
Repeated use of this sequence shows that for any $r>0$ we have a short exact sequence
\begin{equation} \label{ses -2} 0 \rightarrow M_r \rightarrow \bar \Delta_p(0) \rightarrow \Delta_p(0)^{(r)} \rightarrow 0,
\end{equation}
where the submodule $M_r$ has a finite composition series with factors $\bar L_p(-2), \bar L_p(-2)^{(1)}, \cdots , \bar L_p(-2)^{(r-1)}$. 
In particular, we see that $\bar \Delta_p(0)$ has simple socle equal to $\bar L_p(-2)$. In fact, similar computations give for general $n\geq 0$, 
that the socle of $\bar \Delta_p(n)$ is $\bar L_p(-n-2)$.

Likewise, we get for $n = -2$ the short exact sequence
\begin{equation}
0 \rightarrow \bar \Delta_p(-2)^{(1)} \rightarrow \bar \Delta_p(-2) \rightarrow \bar L_p(-2) \rightarrow 0.
\end{equation}
This leads for each $r>0$ to the following short exact sequences
$$0 \rightarrow \bar \Delta_p(-2)^{(r)} \rightarrow \bar \Delta_p(-2) \rightarrow N_r \rightarrow 0,$$
where the quotients $N_r$ have finite composition series with factors 
$\bar L_p(-2)^{(r-1)}, \bar L_p(-2)^{(r-2)}, \cdots ,\\ \bar L_p(-2)$. 
In particular, we see from this that $\Hom_{\bar {\mathcal O}_p}(\bar L_p(n), \bar \Delta_p(-2)) = 0$ for all $n$. 
Again this result holds also when we replace $-2$ by any smaller integer, cf. Corollary 8.2 below for general $\mathfrak g$. 

\subsection{The simple modules in $\mathcal O_q = \mathcal O_q(\mathfrak {sl}_2$)}
In this and the next section we consider the quantum $\mathfrak {sl}_2$-case where $q \in K$ is a primitive root of unity of order $\ell >1$. As always we assume for convenience that $\ell $ is odd.

We have analogues of the results in Section 5.1. Skipping further details we only state the following results (which determine all irreducible characters in $\mathcal O_q$):
\begin{equation} \label{n small}
\Char L_q(n) = e^n + e^{n-2} + \cdots + e^{-n} \text { for  } n = 0, 1, \cdots , \ell -1,
\end{equation}
\begin{equation} \label{n positive}
\Char L_q(n) = \Char L_q(n_0) (\Char \bar L_p(n_1))^{(\ell)} \text { for all }  n= n_0 + n_1 \ell \text { with } 0 \leq n_0 \leq \ell -1, n_1 \in \Z,
\end{equation}
\begin{equation}
 L_q(-1) = \Delta_q(-1) \text { and } \Char L_q(-1) = e^{-1} + e^{-3} + e^{-5} + \cdots .
\end{equation}

Note that the two characters appearing on the right hand side in (\refeq{n positive}) are given by (\refeq{n small}), respectively  the formulas in Section \ref{sl_2 p}.

\subsection{The Verma modules in $\mathcal O_q$}
We list the following results analogous to those in Section 5.2.

There is an exact sequence
\begin{equation}
 0 \rightarrow L_q(-2) \rightarrow \Delta_q(0) \rightarrow \bar \Delta_p(0)^{[\ell]} \rightarrow 0.
\end{equation}
Combining this with (\refeq{ses -2}) we see that $\Delta_q(0)$ has for each $r \geq 0$ a submodule $M_r$ with composition factors $L_q(-2), L_q(-2\ell), L_q(-2\ell p), \cdots , L_q(-2\ell p^{r-1})$. The quotient of $M_r$ is $(\bar \Delta_p(0)^{(r)})^{[\ell]}$. In particular, we see that $\bar \Delta_p(0)$ has simple socle. This fact remains true, when we replace $0$ by any other non-negative integer, cf. Corollary 8.5 for general $\mathfrak g$.

Likewise, we have a short exact sequence
\begin{equation}
0 \rightarrow  \Delta_q(-2)^{[\ell]} \rightarrow \Delta_q(-2) \rightarrow L_q(-2) \rightarrow 0.
\end{equation}

This leads for each $r\geq 0$ to a quotient $N_r =\Delta_q(-2)/ (\bar \Delta_p(-2)^{(r)})^{[\ell]}$ with composition factors 
$ L_q(-2\ell {p^{r-1})}, L_q(-2 \ell {p^{r-2}}), \cdots , \bar L_q(-2)$. 

 In particular, we see that $\Hom_{ {\mathcal O}_q}(L_q(n), \Delta_q(-2)) = 0$ for all $n$. 
This result holds also when we replace $-2$ by any smaller integer.

\section{The special Verma module}
We return to the case of a general Cartan matrix, but continue to assume that $q$ is an $\ell$'th  root of unity in a field $K$ of characteristic $p > 0$. First we treat the case $\ell = 1$.

\subsection{The special Verma module in $\bar {\mathcal O}_p$}

\begin{thm} \label{irr Verma}
Let $\lambda \in X$. Then $\bar \Delta_p(\lambda)$ is irreducible if and only if $\lambda = - \rho$.
\
\end{thm}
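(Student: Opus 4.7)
The plan is to handle the two implications separately: the $(p^r,\Delta)$-filtration machinery of Section~4 drives the ``only if'' direction, while the Steinberg tensor product theorem drives the ``if'' direction.

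For the ``only if'' direction I would assume $\lambda\neq -\rho$, so $\lambda+\rho\neq 0$, and choose $r$ large enough that some nonzero coordinate of $\lambda+\rho$ has absolute value below $p^r$; then $\lambda+\rho\notin p^r X$, equivalently the $p^r$-adic component $\lambda^0\in X_{p^r}$ of $\lambda$ satisfies $\lambda^0\neq (p^r-1)\rho$. The modular analogue of Proposition \ref{irr baby Verma} (with the identical proof: $\tilde L_{p^r}(\lambda^0)$ sits inside $\bar H^0_p(\lambda^0)$ which by the Weyl dimension formula has dimension strictly less than $p^{rN}=\dim_K\tilde\Delta_r(\lambda^0)$ whenever $\lambda^0\in X_{p^r}\setminus\{(p^r-1)\rho\}$) then tells me that $\tilde\Delta_r(\lambda)=\tilde\Delta_r(\lambda^0)\otimes p^r\lambda^1$ has composition length at least two. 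Applying the exact functor $\bar\Delta'_{p^r}$ to a composition series of $\tilde\Delta_r(\lambda)$, exactly as in the construction of Proposition \ref{p-filt}, yields a $(p^r,\Delta)$-filtration of $\bar\Delta_p(\lambda)$ of length $n\geq 2$, and the first term $F_1$ is a proper nonzero $\bar U_K$-submodule. Hence $\bar\Delta_p(\lambda)$ is reducible.

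For the ``if'' direction I would prove $\bar L_p(-\rho)=\bar\Delta_p(-\rho)$ by comparing weight space dimensions. Applying Corollary \ref{iterate} to the $p^r$-adic decomposition $-\rho=(p^r-1)\rho+p^r(-\rho)$ gives $\bar L_p(-\rho)\simeq \overline{St}_r\otimes \bar L_p(-\rho)^{(r)}$ for every $r\geq 1$. Fix $\mu=-\rho-\sum_i k_i\alpha_i\leq -\rho$ and choose $r$ so large that $p^r$ strictly exceeds $\sum_i k_i$. Retaining only the top-weight summand (i.e.\ $\nu=-\rho$) of $\bar L_p(-\rho)^{(r)}$ in the tensor product weight decomposition gives the lower bound
\[
\dim_K\bar L_p(-\rho)_\mu\ \geq\ \dim_K\overline{St}_{r,\,\mu+p^r\rho}\cdot\dim_K\bar L_p(-\rho)^{(r)}_{-p^r\rho}\ =\ \dim_K\overline{St}_{r,\,\mu+p^r\rho}.
\]
The right-hand multiplicity is, by Weyl's character formula applied to $\chi((p^r-1)\rho)$, the number of representations $\sum_i k_i\alpha_i=\sum_{\alpha>0} n_\alpha\alpha$ subject to $0\leq n_\alpha\leq p^r-1$; with the choice $p^r>\sum_i k_i$ the upper-bound constraint is vacuous, so the count equals the Kostant partition $P(-\rho-\mu)=\dim_K\bar\Delta_p(-\rho)_\mu$. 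Combined with the reverse inequality $\dim_K\bar L_p(-\rho)_\mu\leq\dim_K\bar\Delta_p(-\rho)_\mu$ (valid because $\bar L_p(-\rho)$ is a quotient of $\bar\Delta_p(-\rho)$) this yields equality of characters and therefore $\bar L_p(-\rho)=\bar\Delta_p(-\rho)$.

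The only genuine technical step is the weight multiplicity identity $\dim_K\overline{St}_{r,\,\mu+p^r\rho}=P(-\rho-\mu)$ for $r$ large, which is where the rectangular-box shape of the Steinberg weight polytope in the positive-root basis enters decisively. Everything else follows mechanically from results recalled earlier; as a bonus, the ``if'' argument gives a self-contained derivation of the character identity $\Char\bar L_p(-\rho)=q^-$ that was implicit in the introduction.
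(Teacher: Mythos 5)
Your proof is correct and follows essentially the same route as the paper: the ``if'' direction uses the iterated Steinberg tensor product theorem (Corollary \ref{iterate}) to bound the weight multiplicities of $\bar L_p(-\rho)$ from below by those of $\overline{St}_r$, then identifies the latter with the Kostant partition count for $r\gg 0$, while the ``only if'' direction reduces to the non-irreducibility of the baby Verma $\tilde\Delta_r(\lambda)$ (the modular analogue of Proposition \ref{irr baby Verma}) and transfers this through the exact functor $\bar\Delta'_{p^r}$ and Proposition \ref{p-filt}. The only differences are cosmetic: you spell out explicitly why one can choose $r$ with $\lambda^0\neq(p^r-1)\rho$, and you phrase the Steinberg weight count via the Weyl character formula for $\chi((p^r-1)\rho)$ rather than via the baby Verma character formula \eqref{baby char}, but these are the same computation.
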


\begin{proof} To prove that $\bar \Delta_p(-\rho)$ is irreducible we show that the surjection $\bar \Delta_p(-\rho) \rightarrow \bar L_p(-\rho)$ is an isomorphism. This follows once we check that 
\begin{equation} \label{spec Verma weights}
\dim_K \bar \Delta_p(-\rho)_{-\rho - \mu} \leq \dim_K \bar L_p(-\rho)_{-\rho - \mu}
\end{equation}
for all $\mu \geq 0$. 
By the modular analogue of \refeq{Verma char} the left hand side of \ref{spec Verma weights} equals $P(\mu)$. By Corollary \ref{iterate} we have $\bar L_p(-\rho) \simeq \overline {St}_{r} \otimes \bar L_p(-\rho)^{(r)}$ for all $r > 0$. Hence $\dim_K \bar L_p(-\rho)_{-\rho - \mu} \geq  \dim_K (\overline {St}_{r})_{(p^r - 1)\rho - \mu}$. 
By the analoque of (\ref{baby char}) this last dimension equals $ \# \{(n_\beta)_{\beta \in R^+} \mid 0 \leq n_\beta < p^r \text { for all } \beta \text { and } \sum_{\beta \in R^+} n_\beta \beta = \mu \}$. If $r$ is large enough then the conditions $n_\beta < p^r$ are automatically  satisfied for all $(n_\beta))$ for which $ \sum_\beta n_\beta \beta = \mu$. Hence  this number equals $P(\mu)$ for all $r \gg 0$.

To prove the converse consider $\lambda \neq -\rho$ and write as usual $\lambda =\lambda^0 + p^r \lambda^1$. 
Then there exists $r > 0$ such that $\lambda^0 
\neq (p^r-1)\rho$ and by the analogue of Proposition \ref{irr baby Verma} we have that $\tilde \Delta_{r}(\lambda)$ is not irreducible. 
Therefore the $(p^r,\Delta)$-filtration of $\bar \Delta_p(\lambda) $ from Proposition \ref{p-filt} contains more than one term and thus $\bar \Delta_p(\lambda)$ is not simple.

\end{proof}
This result allows us in particular to give the following formula for the character of a simple module in $\bar {\mathcal O}_p$ with antidominant highest weight (recall from Section 2.3 the notation $q^- =\sum_{\mu \leq -\rho} P(-\rho - \mu) e^\mu $).
\begin{cor} \label{antidominant simple}
Let $\lambda \in X^-$ and choose $r$ so big that $p^r\rho + \lambda \in X_r$. Then
$$ \Char \bar L_p(\lambda) = \Char\bar  L_p(p^r \rho +\lambda) (q^-)^{(r)}.$$
\end{cor}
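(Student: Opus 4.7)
The plan is to read off the corollary as a direct consequence of the iterated Steinberg tensor product theorem (Corollary \ref{iterate}) together with the special Verma module theorem just proved.

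First I would observe that the decomposition $\lambda = (p^r\rho + \lambda) + p^r(-\rho)$ is precisely the $p^r$-adic decomposition $\lambda = \lambda^0 + p^r\lambda^1$ required by Corollary \ref{iterate}, provided $r$ is large enough that $p^r\rho + \lambda \in X_{p^r}$ (this is the hypothesis of the corollary, and it is achievable since $\lambda \in X^-$ means $\langle\lambda,\alpha_i^\vee\rangle \leq 0$, so for $r \gg 0$ we have $0 \leq p^r + \langle\lambda,\alpha_i^\vee\rangle < p^r$ in the strict case). Then Corollary \ref{iterate} gives the isomorphism
\[
\bar L_p(\lambda) \;\simeq\; \bar L_p(p^r\rho + \lambda) \otimes \bar L_p(-\rho)^{(r)}.
\]

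Next I would pass to characters. Since taking characters is multiplicative on tensor products and the Frobenius twist $(-)^{(r)}$ on characters amounts to the substitution $e^\mu \mapsto e^{p^r\mu}$, we obtain
\[
\Char \bar L_p(\lambda) \;=\; \Char \bar L_p(p^r\rho + \lambda) \cdot \bigl(\Char \bar L_p(-\rho)\bigr)^{(r)}.
\]

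Finally I would invoke Theorem \ref{irr Verma}, which asserts that the surjection $\bar\Delta_p(-\rho) \twoheadrightarrow \bar L_p(-\rho)$ is an isomorphism. Consequently $\Char \bar L_p(-\rho) = \Char \bar\Delta_p(-\rho) = q^-$ by the definition of $q^-$ given in Section 2.3 (transferred to the modular setting via the evident analogue of formula (\ref{Verma char})). Substituting $(q^-)^{(r)}$ for $(\Char \bar L_p(-\rho))^{(r)}$ in the displayed equation yields the stated formula.

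There is really no substantial obstacle here: once Theorem \ref{irr Verma} is in hand, the corollary is a matter of bookkeeping with the Steinberg tensor product theorem. The only mildly delicate point is to verify that for $r$ sufficiently large the pair $(p^r\rho + \lambda, -\rho)$ really does constitute the correct $p^r$-adic decomposition of $\lambda$ in the sense of Corollary \ref{iterate}, but this is exactly what the hypothesis $p^r\rho + \lambda \in X_{p^r}$ ensures.
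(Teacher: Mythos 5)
Your proposal is correct and follows essentially the same route as the paper: apply the (iterated) Steinberg tensor product theorem to the decomposition $\lambda = (p^r\rho+\lambda) + p^r(-\rho)$ and then use Theorem \ref{irr Verma} to identify $\Char \bar L_p(-\rho)$ with $\Char \bar\Delta_p(-\rho) = q^-$. The paper's proof is exactly this combination of Theorem \ref{STP_p} (in its iterated form) with Theorem \ref{irr Verma} and the Verma character formula, so no further comment is needed.
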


\begin{proof} When we combine Theorem \ref{STP_p} with Theorem \ref{irr Verma} we get $\bar L_p(\lambda) \simeq \bar L_p(p^r\rho + \lambda) \otimes \bar \Delta_p(-\rho)^{(r)}$. The formula then follows from (\refeq{Verma char}).
\end{proof}

The special Verma module $\bar \Delta_p(-\rho)$ has also other nice properties:

\begin{thm} \label{spec proj_p}
The Verma module $\bar \Delta_p(-\rho)$ is projective and injective in $\bar {\mathcal O}_p$.
\end{thm}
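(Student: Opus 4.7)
The plan is to exploit the iterated Steinberg factorization
\[
\bar\Delta_p(-\rho) = \bar L_p(-\rho) \simeq \overline{St}_r \otimes \bar L_p(-\rho)^{(r)} = \overline{St}_r \otimes \bar\Delta_p(-\rho)^{(r)},
\]
valid for every $r \geq 1$; this combines Corollary \ref{iterate} with Theorem \ref{irr Verma}, using the $p^r$-adic expansion $-\rho = (p^r-1)\rho + p^r(-\rho)$. Equivalently, by (\ref{simple}) applied to $\lambda = -\rho$, one has
\[
\bar\Delta_p(-\rho) \simeq \bar\Delta'_{p^r}\bigl(\tilde L_{p^r}(-\rho)\bigr),
\]
and as a $\bar u_r\bar B_K$-module the baby simple $\tilde L_{p^r}(-\rho)$ is just the Steinberg module $\overline{St}_r$ with its $\bar B_K$-action twisted by the character $-p^r\rho$.

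For projectivity I would invoke the classical fact that $\overline{St}_r$ is projective (and injective) in the category of finite-dimensional $\bar u_r$-modules (see \cite{RAG}, II.3.18 and II.10.1). This property is preserved by tensoring with a one-dimensional $\bar B_K$-character, so $\tilde L_{p^r}(-\rho)$ is projective (and injective) in the category of $\bar u_r\bar B_K$-weight modules. Frobenius reciprocity
\[
\Hom_{\bar U_K}\bigl(\bar\Delta'_{p^r}(M), N\bigr) \simeq \Hom_{\bar u_r\bar B_K}(M, N),
\]
combined with exactness of restriction from $\bar{\mathcal O}_p$ to $\bar u_r\bar B_K$-modules, then promotes this to exactness of $\Hom_{\bar U_K}(\bar\Delta_p(-\rho), -)$ on $\bar{\mathcal O}_p$, giving projectivity.

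Injectivity follows by duality. The contravariant functor $D_p$ on $\bar{\mathcal O}_p$ (Remark \ref{dual}) preserves characters and so fixes every simple module; in particular $D_p(\bar\Delta_p(-\rho)) = D_p(\bar L_p(-\rho)) \simeq \bar L_p(-\rho) = \bar\Delta_p(-\rho)$. Since $D_p$ swaps projectives and injectives, injectivity of $\bar\Delta_p(-\rho)$ follows from the projectivity just proved.

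The main obstacle is to reconcile projectivity of $\tilde L_{p^r}(-\rho)$ over $\bar u_r\bar B_K$, typically formulated for finite-dimensional weight modules, with projectivity in $\bar{\mathcal O}_p$, whose objects are only required to be bounded above in weight. The natural remedy is to allow $r$ to grow with the target module $N$, ensuring all relevant weight computations happen within a $p^r$-truncation, and to verify compatibility of the Hom computations as $r$ increases.
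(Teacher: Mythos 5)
Your broad strategy — exploit the Steinberg factorization $\bar\Delta_p(-\rho)\simeq\overline{St}_r\otimes\bar\Delta_p(-\rho)^{(r)}$, use Frobenius reciprocity, and rely on the projectivity of the Steinberg module — is the right one, and it is also the engine of the paper's argument. But two steps in your write-up do not hold as stated.

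The first is the claim that projectivity of $\overline{St}_r$ in $\bar u_r$-modules ``is preserved by tensoring with a one-dimensional $\bar B_K$-character, so $\tilde L_{p^r}(-\rho)$ is projective in the category of $\bar u_r\bar B_K$-weight modules.'' Tensoring with a character is indeed harmless, but passing from $\bar u_r$-projectivity to $\bar u_r\bar B_K$-projectivity is not formal: given a surjection $M\twoheadrightarrow N$ of $\bar u_r\bar B_K$-modules, $\bar u_r$-projectivity hands you a $\bar u_r$-linear lift of a given map $\overline{St}_r\to N$, but that lift has no reason to commute with the $\bar B_K$-action, which lives on a genuinely larger algebra. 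You would need an independent argument (or a precise reference) that $\tilde L_{p^r}(-\rho)$ is projective for $\bar u_r\bar B_K$ in the relevant category — and then also pin down which category: for a single fixed $r$, objects of $\bar{\mathcal O}_p$ restricted to $\bar u_r\bar B_K$ are infinite-dimensional with weights only bounded above, and projectivity there is not the statement usually recorded for finite-dimensional modules. You flag this obstacle yourself at the end, but ``allow $r$ to grow with $N$'' is a plan, not an argument; it does not by itself produce the surjectivity of $\Hom_{\bar u_r\bar B_K}(\tilde L_{p^r}(-\rho),\,-)$ that your adjunction step requires.

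The paper sidesteps both issues by a different choice of adjunction. Rather than inducing from $\bar u_r\bar B_K$ and needing $\bar u_r\bar B_K$-projectivity, it uses Proposition \ref{homid}(2) in the form
\[
\Hom_{\bar{\mathcal O}_p}\bigl(\overline{St}_r\otimes\bar\Delta_p(-\rho)^{(r)},\,N\bigr)\;\simeq\;\Hom_{\bar{\mathcal O}_p}\bigl(\bar\Delta_p(-\rho),\,\Hom_{\bar u_r}(\overline{St}_r,N)^{(-r)}\bigr),
\]
so only the (unconditional) $\bar u_r$-projectivity/injectivity of $\overline{St}_r$ is ever used — no $\bar u_r\bar B_K$-projectivity is needed. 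It then makes the ``let $r$ grow'' idea precise by a weight computation: the embedding $\overline{St}_r\otimes\Hom_{\bar u_r}(\overline{St}_r,N)\hookrightarrow N$ forces $(p^r-1)\rho+p^r\mu$ to be a weight of $N$ for every weight $\mu$ of $\Hom_{\bar u_r}(\overline{St}_r,N)^{(-r)}$, so for $r$ sufficiently large this auxiliary module has no weights strictly above $-\rho$; in that regime $\Hom_{\bar{\mathcal O}_p}(\bar\Delta_p(-\rho),\,V)=V_{-\rho}$, which is manifestly exact. That explicit reduction is exactly what is missing from your sketch. Your duality argument for injectivity is fine and agrees with the paper's.
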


\begin{proof}
Recall that since  $\bar \Delta_p(-\rho)$ is simple, it is self-dual. Hence it is enough to check that it is projective.

Suppose $M \rightarrow N$ is a surjection in $\bar {\mathcal O}_p$. We claim that the induced map
\begin{equation} \label{surj}
\Hom_{\bar {\mathcal O}_p}(\bar \Delta_p(-\rho), M) \rightarrow  \Hom_{\bar {\mathcal O}_p}(\bar \Delta_p(-\rho), N)
\end{equation}
is also surjective.

The definition of Verma modules implies that for any $V \in \bar {\mathcal O}_p$ we have
\begin{equation} \label{Verma prop}
\Hom_{\bar {\mathcal O}_p}(\bar \Delta_p(-\rho), V) \simeq \{v \in V_{-\rho} \mid \bar B_K v = K v \}.
\end{equation}

To check the surjectivity of (\refeq{surj}) we consider first the special case, where $M$ has no weights strictly bigger than $-\rho$. In that case (\refeq{Verma prop}) gives $\Hom_{\bar {\mathcal O}_p}(\bar \Delta_p(-\rho), M) = M_{-\rho}$ and $\Hom_{\bar {\mathcal O}_p}(\bar \Delta_p(-\rho), N) = N_{-\rho}$. Hence in this case the claim certainly holds.

So consider now a general $M \in \bar {\mathcal O}_p$. We can find weights $\lambda_1, \lambda_2, \cdots , \lambda_n$ such that all weights $\mu$ of $M $ satisfy $\mu \leq \lambda_i$ for some $i$. By Theorem \ref{irr Verma} and  Theorem \ref{STP_p} we have $\bar \Delta_p(-\rho) = St_r \otimes \bar \Delta_p(-\rho)^{(r)}$. By Proposition \ref{homid} (2) this gives
\begin{equation} \label{Steinberg red}
\Hom_{\bar {\mathcal O}_p}(\bar \Delta_p(-\rho), M)  \simeq \Hom_{\bar {\mathcal O}_p}(\bar \Delta_p(-\rho), \Hom_{\bar u_{r}}(St_r, M)^{(-r)}).
\end{equation}
Now the natural map $St_r \otimes \Hom_{\bar u_{r}}(St_r, M) \rightarrow M$ is injective and hence for  any weight $\mu$ of $ \Hom_{\bar u_{r}}(St_r, M)^{(-r)}$ we must have that $(p^r-1)\rho + p^r \mu$ is a weight of $M$. This implies that  $(p^r-1)\rho + p^r \mu \leq \lambda_i$ for some $i$. So if $r$ is large enough we cannot have $\mu$ strictly bigger than $-\rho$. Combining this with (\refeq{Steinberg red}) we then deduce the general case from the special case treated above.

\end{proof}

\subsection{The special Verma module in $\mathcal O_q$}

In analogy with Theorem \ref{irr Verma} we get
\begin{thm} \label{irr Verma_q}
Let $\lambda \in X$. Then $\Delta_q(\lambda)$ is irreducible if and only if $\lambda = -\rho$.
\end{thm}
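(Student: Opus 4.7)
The plan is to mirror the strategy of Theorem \ref{irr Verma} in the quantum setting, simultaneously unwinding the quantum Frobenius at level $\ell$ and the iterated modular Frobenius at level $p^r$.

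For sufficiency ($\lambda=-\rho$), I would show that the canonical surjection $\Delta_q(-\rho)\twoheadrightarrow L_q(-\rho)$ is an isomorphism by proving $\dim_K L_q(-\rho)_{-\rho-\mu}\geq P(\mu)$ for every $\mu\geq 0$; the reverse inequality is automatic from \eqref{Verma char}. Combining Theorem \ref{STP_q} on $-\rho=(\ell-1)\rho+\ell(-\rho)$ with Corollary \ref{iterate} on $-\rho=(p^r-1)\rho+p^r(-\rho)$ yields
\[
L_q(-\rho)\simeq St_q\otimes \overline{St}_r^{[\ell]}\otimes\bigl(\bar L_p(-\rho)^{(r)}\bigr)^{[\ell]}
\]
for every $r>0$, using that the quantum Frobenius twist distributes over tensor products. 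Restricting to the $(-\ell p^r\rho)$-weight line in the last factor gives the lower bound $\dim_K L_q(-\rho)_{-\rho-\mu}\geq\dim_K\bigl(St_q\otimes\overline{St}_r^{[\ell]}\bigr)_{(\ell p^r-1)\rho-\mu}$. Applying \eqref{baby char} to each Steinberg factor separately identifies the right-hand side with the number of nonnegative decompositions $\mu=\sum_{\beta\in R^+}c_\beta\beta$ with $c_\beta<\ell p^r$, via the unique writing $c_\beta=n_\beta+\ell m_\beta$ with $0\leq n_\beta<\ell$ and $0\leq m_\beta<p^r$. Once $r$ is taken large enough relative to $\mu$, every nonnegative decomposition of $\mu$ automatically satisfies this bound, so the count collapses to $P(\mu)$.

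For necessity, suppose $\lambda\neq -\rho$ and write $\lambda=\lambda^0+\ell\lambda^1$ with $\lambda^0\in X_\ell$. If $\lambda^0\neq(\ell-1)\rho$, then Proposition \ref{irr baby Verma} makes the baby Verma $\tilde\Delta_\ell(\lambda)$ reducible, so the $(\ell,\Delta)$-filtration of $\Delta_q(\lambda)$ supplied by Proposition \ref{ell filt} (constructed by applying $\Delta'_q$ to a composition series of $\tilde\Delta_\ell(\lambda)$) has more than one step and therefore supplies a proper nonzero submodule. If instead $\lambda^0=(\ell-1)\rho$ but $\lambda^1\neq-\rho$, the baby Verma is irreducible by Proposition \ref{irr baby Verma}, and then \eqref{tensor id_q} together with \eqref{twist_q} collapses the filtration to $\Delta_q(\lambda)\simeq St_q\otimes\bar\Delta_p(\lambda^1)^{[\ell]}$. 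Since Theorem \ref{irr Verma} forces $\bar\Delta_p(\lambda^1)$ to be reducible, a proper nonzero $\bar U_K$-submodule $N\subset\bar\Delta_p(\lambda^1)$ yields the proper nonzero $U_q$-submodule $St_q\otimes N^{[\ell]}\subset\Delta_q(\lambda)$.

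The main obstacle lies in the sufficiency direction's bookkeeping: after layering the two Frobenius twists, one must read off from \eqref{baby char} that the weight-space dimensions of the ``mixed'' Steinberg tensor $St_q\otimes\overline{St}_r^{[\ell]}$ absorb Kostant's partition function once $r$ is sufficiently large, which requires matching the $\ell$-adic and $p^r$-adic combinatorics of positive roots. The necessity argument is then straightforward and splits cleanly along whether reducibility originates from the quantum baby Verma or is inherited via Frobenius twist from the modular special Verma module.
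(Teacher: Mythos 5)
Your proposal is correct, and its necessity half is essentially what the paper intends when it says the converse is ``completely analogous'' to Theorem \ref{irr Verma}: reducibility of $\tilde\Delta_\ell(\lambda)$ when $\lambda^0\neq(\ell-1)\rho$ (via Proposition \ref{irr baby Verma} plus the periodicity \eqref{period}) gives a multi-step $(\ell,\Delta)$-filtration, and in the remaining case $\lambda^0=(\ell-1)\rho$, $\lambda^1\neq-\rho$ your direct construction of the submodule $St_q\otimes N^{[\ell]}$ inside $St_q\otimes\bar\Delta_p(\lambda^1)^{[\ell]}$ is a clean substitute for the paper's implicit appeal to the refined $(\ell,p^r,\Delta)$-filtration of Corollary \ref{ell-p filt}. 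Where you genuinely diverge is the sufficiency direction: the paper observes that $\tilde\Delta_\ell(-\rho)$ is the (simple) Steinberg module, so Proposition \ref{ell filt} collapses to $\Delta_q(-\rho)\simeq St_q\otimes\bar\Delta_p(-\rho)^{[\ell]}$, and then it simply quotes Theorem \ref{irr Verma} ($\bar\Delta_p(-\rho)=\bar L_p(-\rho)$) together with Theorem \ref{STP_q} to conclude $\Delta_q(-\rho)\simeq L_q(-\rho)$; you instead bypass the modular theorem and redo its weight-multiplicity estimate at the quantum level, bounding $\dim_K L_q(-\rho)_{-\rho-\mu}$ below by the $(\ell p^r-1)\rho-\mu$ weight space of the mixed Steinberg module $St_q\otimes\overline{St}_r^{[\ell]}$ and matching the count (parts $c_\beta=n_\beta+\ell m_\beta<\ell p^r$) with $P(\mu)$ for $r\gg0$. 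Both work: the paper's reduction is shorter and reuses Theorem \ref{irr Verma} as a black box, while your argument is self-contained in the quantum setting at the cost of repeating the Kostant-partition bookkeeping and of invoking (correctly, but tacitly) that the $\ell$-twist $(-)^{[\ell]}$ is monoidal and that $\Char\overline{St}_r$ is given by the baby-Verma formula \eqref{baby char}.
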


\begin{proof} 
As $\tilde \Delta_q((\ell -1)\rho) = St_q$ we see from Proposition \ref{ell filt} that $\Delta_q(-\rho) \simeq St_q \otimes \bar \Delta_p(-\rho)^{[\ell]}$. By Theorem \ref{STP_q} we get on the other hand $L_q(-\rho) \simeq St_q \otimes \bar L_p(-\rho)^{[\ell]}$. Hence it follows from Theorem \ref{irr Verma} that $\Delta_q(- \rho)$ is irreducible.

The proof of the converse is completely analogous to the proof given in Theorem \ref{irr Verma}.
\end{proof}

\begin{thm} \label{proj_q}
The Verma module $\Delta_q(-\rho)$ is projective and injective in $\mathcal O_q$.
\end{thm}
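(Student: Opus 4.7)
The plan is to mirror the proof of Theorem \ref{spec proj_p}, now using the quantum Frobenius to reduce the claim to its modular counterpart that has just been established. First, by Theorem \ref{irr Verma_q} one has $\Delta_q(-\rho) = L_q(-\rho)$, and since the duality functor $D_q$ of Remark \ref{dual} fixes every simple module, $\Delta_q(-\rho)$ is self-dual. It therefore suffices to prove projectivity.

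Let $\pi\colon M \twoheadrightarrow N$ be a surjection in $\mathcal O_q$. From the proof of Theorem \ref{irr Verma_q} we have the tensor decomposition $\Delta_q(-\rho) \simeq St_q \otimes \bar\Delta_p(-\rho)^{[\ell]}$. Applying Proposition \ref{homid}(1) with $L = \bar\Delta_p(-\rho)$ gives, for every $V \in \mathcal O_q$, an isomorphism
$$\Hom_{U_q}(\Delta_q(-\rho), V) \;\simeq\; \Hom_{\bar U_K}\!\bigl(\bar\Delta_p(-\rho),\, \Hom_{u_q}(St_q, V)^{[-\ell]}\bigr),$$
natural in $V$. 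Two auxiliary points have to be checked. First, the functor $\Hom_{u_q}(St_q, -)$ is exact: this is the standard fact that $St_q$ is projective–injective as a $u_q$-module, a reflection of its being the baby Verma module of top $\ell$-restricted weight $(\ell-1)\rho$ (cf.\ Proposition \ref{irr baby Verma}). Second, $\Hom_{u_q}(St_q, V)^{[-\ell]}$ has to lie in $\bar{\mathcal O}_p$: weight space finiteness and local finiteness for $\bar B_K$ descend from the corresponding properties of $V$, while the bounded-weights condition uses injectivity of the evaluation map $St_q \otimes \Hom_{u_q}(St_q, V) \to V$, which forces $(\ell-1)\rho + \ell\mu$ to be a weight of $V$ for every weight $\mu$ of $\Hom_{u_q}(St_q, V)^{[-\ell]}$, yielding the required bound from the bounds on the weights of $V$.

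With these auxiliary facts in hand, $\pi$ induces a surjection $\Hom_{u_q}(St_q, M)^{[-\ell]} \twoheadrightarrow \Hom_{u_q}(St_q, N)^{[-\ell]}$ in $\bar{\mathcal O}_p$, and Theorem \ref{spec proj_p} — the projectivity of $\bar\Delta_p(-\rho)$ in $\bar{\mathcal O}_p$ — yields the surjectivity of the corresponding $\Hom_{\bar U_K}(\bar\Delta_p(-\rho), -)$-spaces. Via the displayed isomorphism this translates back into the desired surjection on $\Hom_{U_q}(\Delta_q(-\rho), -)$, completing the proof. The principal obstacle in executing this plan is the second auxiliary point: one must verify that the natural $U_q^0$-action on $\Hom_{u_q}(St_q, V)$ factors through the quantum Frobenius to integral $\bar U_K^0$-weights, and that the bound from the evaluation map really places the resulting $\bar U_K$-module in $\bar{\mathcal O}_p$. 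Once this verification is in place, everything else is a direct application of results already proved in the paper.
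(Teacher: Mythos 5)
Your proof is correct and takes essentially the same route as the paper: the decomposition $\Delta_q(-\rho) \simeq St_q \otimes \bar\Delta_p(-\rho)^{[\ell]}$, Proposition \ref{homid}(1), exactness of $\Hom_{u_q}(St_q,-)$ (projectivity/injectivity of the Steinberg module over $u_q$), and Theorem \ref{spec proj_p}, with self-duality of the simple module $\Delta_q(-\rho)=L_q(-\rho)$ reducing injectivity to projectivity. Your extra verification that $\Hom_{u_q}(St_q,V)^{[-\ell]}$ actually lies in $\bar{\mathcal O}_p$ (via the evaluation map bounding the weights) is a point the paper leaves implicit here, borrowing it from the modular argument in Theorem \ref{spec proj_p}, and your treatment of it is sound.
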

\begin{proof} 
As in the proof of Theorem \ref{irr Verma_q} we have  $\Delta_q(-\rho) = St_q \otimes \bar \Delta_p(-\rho)^{[\ell]}$. 
Therefore, by Proposition \ref{homid} (1) we get
 $$\Hom_{\mathcal O_q}(\Delta_q(-\rho), M) \simeq \Hom_{\bar {\mathcal O}_p}(\bar \Delta_p(-\rho), \Hom_{u_q}(St_q, M)^{[-\ell]})$$ 
 for all $M \in \mathcal O_q$. 
 Now $\Hom_{u_q}(St_q, -)$ is exact (e.g. by the $q$-analogue of \cite{RAG}, Lemma II.11.8),
 and according to Theorem \ref{spec proj_p} so is $\Hom_{\bar {\mathcal O}_p}(\bar \Delta_p(-\rho), -)$. 
\end{proof}

\section{Tilting modules}
We have both finite dimensional tilting modules and infinite dimensional ones in our BGG-categories.
First we consider the latter class, which we shall call $\infty$-tilting modules. We  prove their most basic properties. Then we recall a few facts about the better studied class of finite dimensional tilting modules. We show how these facts (under some restrictions on $p$) make us able to obtain an explicit relation between the indecomposable $\infty$-tilting modules and certain indecomposable finite dimensional tilting modules.

\subsection{$\infty$-tilting modules}
Our results in this section hold true in $\mathcal O_q$ as well as in $\bar {\mathcal O}_p$  and the proofs are completely analogous. Therefore we formulate and prove our statements only in the first case.

Recall from Remark \ref{dual} that we have a duality functor $D_q$ on $\mathcal O_q$ (as well as a similar duality $D_p$ on $\bar {\mathcal O}_p$), and that for each $\lambda \in X$ we define $\nabla_q(\lambda) = D_q (\Delta_q(\lambda))$ .

\begin{defn} A module $Q$ in $\mathcal O_q$ is called an $\infty$-tilting module if it has a finite $\Delta$-filtration as well as a finite $\nabla$-filtration.
\end{defn}
We emphasize that in this paper by $\Delta$-filtration we mean a filtration in which each successive quotient is a Verma module. If we replace Verma by Weyl then we get the finite dimensional tilting modules instead.

\begin{exa} We have already come across one $\infty$-tilting module, namely the special Verma module $\Delta_q(-\rho)$, which we studied in the previous section. By Theorem \ref{irr Verma_q} we have $\Delta_q(-\rho)) \simeq L_q(-\rho) \simeq \nabla_q(-\rho)$.
\end{exa}
\vskip .3 cm
If $Q$ has a finite $\Delta$-filtration we shall denote the multiplicity with which a given Verma module $\Delta_q(\lambda)$ occurs as a quotient in such a filtration by $(Q:\Delta_q(\lambda))$. Note that the head of $Q$  is contained in $\oplus L_q(\lambda)$ with $\lambda$ occurring with multiplicity $(Q:\Delta_q(\lambda))$ in the sum. In particular, the head of $Q$ consists of finitely many simple modules. It follows that $Q$ splits into a finite number of indecomposable summands. Each of these summands have a $\Delta$-filtration. Hence if $Q$ is an $\infty$-tilting module it follows that it splits into a finite number of indecomposable $\infty$-tilting modules. 

\begin{lem} \label{finite tensor} Suppose $Q$ has a finite $\Delta$-filtration and let $F$ be a finite dimensional module.  Then $F \otimes Q$ has a finite $\Delta$-filtration,  and $(F \otimes Q: \Delta_q(\lambda)) = \sum_\mu (Q:\Delta_q(\mu)) \dim_K F_{\lambda - \mu}$ for all $\lambda$.
\end{lem}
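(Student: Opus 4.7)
The plan is to reduce the lemma to the case where $Q$ is a single Verma module $\Delta_q(\mu)$, and then prove that case by the standard tensor identity for induced modules.

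First I would dispose of the reduction step. If $Q$ has a $\Delta$-filtration of length $> 1$, pick a submodule $Q'\subset Q$ such that $Q/Q'\simeq \Delta_q(\mu)$ for some $\mu$ and $Q'$ has a shorter $\Delta$-filtration. Since $F$ is finite dimensional over $K$, tensoring by $F$ over $K$ is exact, so
\[
0\to F\otimes Q'\to F\otimes Q\to F\otimes \Delta_q(\mu)\to 0
\]
is exact in $\mathcal O_q$. (That $F\otimes Q$ still lies in $\mathcal O_q$ is immediate: the weight space $(F\otimes Q)_\lambda=\bigoplus_\nu F_\nu\otimes Q_{\lambda-\nu}$ is finite dimensional and bounded above, and $B_q(f\otimes q)\subseteq B_qf\otimes B_qq$ via the coproduct, since $B_q$ is a sub-bialgebra of $U_q$.) Assuming the result for $Q'$ and for $\Delta_q(\mu)$, splicing the two filtrations yields a $\Delta$-filtration of $F\otimes Q$ with the multiplicities adding correctly:
\[
(F\otimes Q:\Delta_q(\lambda))=(F\otimes Q':\Delta_q(\lambda))+\dim_KF_{\lambda-\mu},
\]
which matches $\sum_\nu(Q:\Delta_q(\nu))\dim_KF_{\lambda-\nu}$ as required.

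The heart of the proof is thus the case $Q=\Delta_q(\mu)=U_q\otimes_{B_q}K_\mu$. The key ingredient is the tensor identity for Hopf algebra induction,
\[
F\otimes(U_q\otimes_{B_q}V)\;\simeq\;U_q\otimes_{B_q}(F|_{B_q}\otimes V),
\]
valid for any $U_q$-module $F$ and $B_q$-module $V$ (the isomorphism is $x\otimes(u\otimes v)\mapsto \sum u_{(1)}\otimes(S(u_{(2)})x\otimes v)$ after passing through the coproduct). Taking $V=K_\mu$ gives
\[
F\otimes\Delta_q(\mu)\simeq U_q\otimes_{B_q}\bigl(F|_{B_q}\otimes K_\mu\bigr).
\]
Next, since $F$ is a weight module, $F|_{B_q}$ has a $B_q$-module filtration whose successive quotients are $1$-dimensional weight modules $K_\nu$, each appearing with multiplicity $\dim_KF_\nu$: one orders the weights of $F$ by a total order refining the partial order $\leq$, and uses that $U_q^+$ raises weights to make the weight spaces into a filtration. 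Tensoring this with $K_\mu$ produces a $B_q$-filtration of $F|_{B_q}\otimes K_\mu$ with quotients $K_{\nu+\mu}$ of multiplicity $\dim_KF_\nu$.

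Finally, the induction functor $U_q\otimes_{B_q}(-)$ is exact, because by the triangular decomposition and the PBW theorem $U_q$ is free as a right $B_q$-module (with basis a PBW-basis of $U_q^-$). Applying this exact functor to the above filtration yields a $\Delta$-filtration of $F\otimes\Delta_q(\mu)$ whose successive quotients are $\Delta_q(\nu+\mu)$ with multiplicity $\dim_KF_\nu$, i.e.\ $\Delta_q(\lambda)$ with multiplicity $\dim_KF_{\lambda-\mu}$. Combined with the reduction step above, this proves the lemma. The only real subtlety is verifying the tensor identity in the Hopf-algebra setting and the freeness of $U_q$ over $B_q$; both are standard and follow from the Hopf structure of $U_q$ together with the PBW theorem, so no serious obstacle is expected.
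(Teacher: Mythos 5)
Your proof is correct and is essentially the paper's argument: reduce to $Q=\Delta_q(\mu)$, use the tensor identity $F\otimes\Delta_q(\mu)\simeq U_q\otimes_{B_q}(F|_{B_q}\otimes\mu)$, filter $F|_{B_q}$ by one-dimensional $B_q$-modules, and apply the exact induction functor. You have merely spelled out the details (exactness of induction via PBW-freeness, the explicit tensor-identity map, the weight-order filtration) that the paper leaves as "the standard argument."
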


\begin{proof} It is clearly enough to check that $F \otimes \Delta_q(\lambda)$ has a $\Delta$-filtration for all $\lambda \in X$. This is seen by the standard argument:  Observe that $F \otimes \Delta_q(\lambda) \simeq \Delta_q(F \otimes \lambda)$. Now take a $B_q$-filtration of $F$ with $1$-dimensional quotients, tensor this filtration by $\lambda$ and apply the exact functor $\Delta_q$ to get a $\Delta$-filtration of $F \otimes \Delta_q(\lambda)$. This construction also proves the multiplicity formula.
\end{proof}

\begin{rem} \label{finite remark} It follows immediately from this lemma that if $Q$ is $\infty$-tilting and $F$ is a finite dimensional self-dual module then $Q \otimes F$ is also $\infty$-tilting.
\end{rem}

\begin{thm} \label{tilt exist}
If $\lambda \in X^+ -\rho$ then there exists a unique (up to isomorphism) indecomposable $\infty$-tilting module $T_q^\infty(\lambda)$ in $\mathcal O_q$ with $T^\infty_q(\lambda)_\lambda \simeq K$ and $T_q^\infty(\lambda)_\mu \neq 0$ if and only if $\mu \leq \lambda$.
\end{thm}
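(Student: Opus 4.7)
The plan is twofold: construct $T_q^\infty(\lambda)$ explicitly as an indecomposable summand of a well-chosen tensor product, then establish uniqueness via a Ringel-type Ext-vanishing argument.

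For existence, set $\nu = \lambda + \rho \in X^+$ and let $T_q(\nu)$ denote the indecomposable finite dimensional tilting module with highest weight $\nu$, whose existence and self-duality are proved in \cite{Do} and \cite{An92}. Since $\Delta_q(-\rho)$ is $\infty$-tilting (Example 7.2) and $T_q(\nu)$ is finite dimensional and self-dual, Remark \ref{finite remark} shows that $M := T_q(\nu) \otimes \Delta_q(-\rho)$ is $\infty$-tilting in $\mathcal O_q$. Its weights are bounded above by $\nu + (-\rho) = \lambda$, and a direct computation using $\dim_K T_q(\nu)_\nu = \dim_K \Delta_q(-\rho)_{-\rho} = 1$ yields $\dim_K M_\lambda = 1$. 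Because $M$ has a finite $\Delta$-filtration its head is a finite direct sum of simples, so $M$ decomposes into finitely many indecomposable summands; the unique one containing the $\lambda$-weight vector is defined to be $T_q^\infty(\lambda)$. It inherits both a $\Delta$- and a $\nabla$-filtration, satisfies $T_q^\infty(\lambda)_\lambda = K$, and since $(T_q^\infty(\lambda) : \Delta_q(\lambda)) = 1$ its set of weights is exactly $\{\mu \in X : \mu \leq \lambda\}$.

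For uniqueness, let $T$ and $T'$ both satisfy the conclusion of the theorem. The central ingredient is the Ext-vanishing $\Extii(\Delta_q(\mu), \nabla_q(\sigma)) = 0$ for all $\mu, \sigma \in X$, combined with $\Hom_{\Uq}(\Delta_q(\mu), \nabla_q(\sigma)) = \delta_{\mu,\sigma}\, K$, the latter following from the simple socle $L_q(\sigma)$ of $\nabla_q(\sigma)$ together with the highest weight property of $\Delta_q(\mu)$. A standard filtration argument then yields
\[ \dim_K \Hom_{\Uq}(T, T') = \sum_\mu (T:\Delta_q(\mu))\,(T':\nabla_q(\mu)), \]
and the $\mu = \lambda$ contribution (which equals $1$) produces a morphism $f: T \to T'$ whose scalar on the unique top-$\Delta_q(\lambda)$-to-bottom-$\nabla_q(\lambda)$ composite is nonzero; such an $f$ is then an isomorphism on the one-dimensional $\lambda$-weight spaces. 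By symmetry there is such a $g: T' \to T$, and $g \circ f$ is a non-nilpotent endomorphism of the indecomposable module $T$; Fitting's lemma forces it to be an automorphism, so $f$ itself is an isomorphism.

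The main obstacle is the Ext-vanishing between Vermas and dual Vermas, since these modules have infinite length in contrast to the characteristic zero setting, so the classical argument does not transfer verbatim. The most efficient route exploits the identity $\Delta_q(-\rho) \simeq St_q \otimes \bar\Delta_p(-\rho)^{[\ell]}$ from the proof of Theorem \ref{irr Verma_q} together with the Frobenius adjunctions of Proposition \ref{homid}, which reduce Ext computations in $\mathcal O_q$ to analogous ones in $\bar{\mathcal O}_p$; iterating through Frobenius twists brings the question to the level of the small hyperalgebras $\bar u_r$, where the required vanishing on baby Verma and baby dual Verma modules is standard. All ambient finiteness needed for these dévissage steps is controlled by the existence of finite $\Delta$- and $\nabla$-filtrations, making Krull--Schmidt applicable to every module encountered.
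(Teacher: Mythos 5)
Your existence construction and your uniqueness strategy are both sound, but they diverge from the paper's proof in interesting ways, and one of your side claims is incorrect.

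For existence you tensor the indecomposable finite-dimensional tilting module $T_q(\lambda+\rho)$ with $\Delta_q(-\rho)$, whereas the paper tensors $L_q(\lambda+\rho)$ with $\Delta_q(-\rho)$. Both $T_q(\lambda+\rho)$ and $L_q(\lambda+\rho)$ are finite-dimensional and self-dual, both yield an $\infty$-tilting module with highest weight $\lambda$ occurring with multiplicity one, and in both cases the desired module is the indecomposable summand carrying that highest weight. This is a minor cosmetic variation.

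For uniqueness the two approaches are genuinely different. The paper exploits the fact that $\Delta_q(-\rho)$ is projective and injective (Theorem~\ref{proj_q}), so $T_q^\infty(\lambda)$ inherits both properties; it then produces maps $T_q^\infty(\lambda)\to Q$ and $Q\to T_q^\infty(\lambda)$ by lifting along the surjections onto $\nabla_q(\lambda)$ and extending along the inclusions of $\Delta_q(\lambda)$, and shows the composite is an automorphism by a careful weight-space-by-weight-space Fitting argument that bypasses the infinite length of $T_q^\infty(\lambda)$. Your route instead uses the Ringel-type formula
$\dim_K\Hom(T,T')=\sum_\mu (T:\Delta_q(\mu))(T':\nabla_q(\mu))$, which rests on Lemma~\ref{delta-nabla}. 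That lemma is proved in the paper (independently of Theorem~\ref{tilt exist}) by a short direct argument using only the universal property $\Hom(\Delta_q(\lambda),V)\simeq\{v\in V_\lambda\mid B_qv=Kv\}$, which is insensitive to the infinite length of Verma modules. Your statement that ``the classical argument does not transfer verbatim'' is therefore wrong, and the Frobenius-dévissage route you sketch is an unnecessary and only loosely justified detour; moreover that reduction would not obviously apply to arbitrary pairs $(\Delta_q(\lambda),\nabla_q(\mu))$, only to ones involving $-\rho$. Finally, your appeal to Fitting's lemma needs the observation (implicit in your Hom formula but not stated) that $\End(T)$ is a finite-dimensional $K$-algebra, hence Artinian local when $T$ is indecomposable, so a non-nilpotent endomorphism is a unit; without that remark, or something like the paper's weight-space argument, Fitting's lemma does not literally apply to the infinite-dimensional module $T$.
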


\begin{proof} Let $\lambda \in X^+ - \rho$. Then $\lambda + \rho \in X^+$ and hence $L_q(\lambda + \rho)$ is  finite dimensional. When we combine Lemma \ref{finite tensor} (and Remark \ref{finite remark}) with Theorem \ref{irr Verma_q} we get that $L_q(\lambda + \rho) \otimes \Delta_q(-\rho)$ is $\infty$-tilting. Moreover, 
 $L_q(\lambda + \rho) \otimes \Delta_q(-\rho)$ has a unique indecomposable summand with highest weight $\lambda$ because $\lambda$ is the maximal weight of  $L_q(\lambda + \rho) \otimes \Delta_q(-\rho)$ and it occurs with multiplicity $1$. So we define $T_q^\infty(\lambda)$ to be that summand and have thus proved the existence part of the theorem.
 
 To establish the uniqueness of $T_q^\infty(\lambda)$ we first observe that since $\Delta_q(-\rho)$ is projective and injective in $\mathcal O_q$ (according to Theorem \ref{proj_q}) 
 so is $T_q^\infty(\lambda)$. Now assume that $Q$ is another indecomposable tilting module in $\mathcal O_q$ with highest weight $\lambda$. 
 Then both $T_q^\infty(\lambda)$ and $Q$ projects onto $\nabla_q(\lambda)$, 
 and therefore there is a homomorphism $\phi : T_q^\infty(\lambda) \to Q$ which is non-zero
on the $\lambda$ weight space. 
Likewise, $\Delta_q(\lambda)$ has inclusions into both $T_q^\infty(\lambda)$ and $Q$. The injectivity of $T_q^\infty(\lambda)$ gives a homomorphism $\psi : Q \to T_q^\infty(\lambda)$ which is also non-zero on the $\lambda$ weight space.
We claim that the composite $\psi \circ \phi$ is an isomorphism so that $T_q^\infty(\lambda)$ is a summand of $Q$ (and hence isomorphic to $Q$).
 
 By construction $\psi \circ \phi$ acts as a non-zero scalar $a$ on the $\lambda$ weight space. 
 We will prove that $f = \psi \circ \phi - a  \Id_{T_q^\infty(\lambda)}$ is nilpotent.  This implies that $\psi \circ \phi$ is an isomorphism.
 
 For each $\mu \in X$ we denote by $f_\mu$ the restriction of $f$ to $T_q^\infty(\lambda)_\mu$. 
 Then $f_\lambda = 0$ so that $f$ is certainly not an isomorphism. 
 For any $\mu$ we have that $f_\mu$ is an endomorphism of a finite dimensional vector space and therefore the sequence $\im f_\mu \supset \im f_\mu^2 \supset \cdots $ must stablize, 
 i.e. there exists $n(\mu) \in \Z_{>0}$ such that $\im f_\mu^{n(\mu)} = \im f_\mu^{n(\mu) +1} = \cdots $. 
 We denote now by $\mu_1, \mu_2, \cdots \mu_r$ the highest weights of the Verma modules that occur in a $\Delta$-filtration of $T_q^\infty(\lambda)$ 
 and set $n = \max \{n(\mu_i)\mid i=1, 2, \cdots r\}$. The usual fitting lemma argument then shows that $T_q^\infty(\lambda)_{\mu_i} = \im f_{\mu_i}^n \oplus \Ker f^n_{\mu_i}$ for all $i$.
This implies that the submodule $M = \im f^n \cap \Ker f^n$ of $T_q^\infty(\lambda)$ is $0$.
In fact, by the choice of $n$ we have $M_{\mu_i} = 0$ for all i, and hence $\Hom_{\mathcal O_q}(M, \nabla_q(\mu_i)) = 0$ for all $i$. 
We conclude that $\Hom_{\mathcal O_q}(M, T_q^\infty(\lambda)) = 0$, i.e $M = 0$. 

Using the injectivity of $T_q^\infty(\lambda)$ we get likewise that   $M'= T_q^\infty(\lambda)/(\im f ^n + \ Ker f^n)$ is $0$, because $\Hom_{\mathcal O_q} (\Delta_q(\mu_i), M') = 0$ for all $i$. We conclude that $T_q^\infty(\lambda) = \im f^n \oplus \Ker f^n$. As $T_q^\infty (\lambda)$ is indecomposable it follows that $f^n = 0$.
 \end{proof}

\begin{rem} Note that we cannot expect to extend the result in this theorem to all $\lambda \in X$. Already for $\mathfrak sl_2$ the result fails for $ \lambda = n < -1$. In fact, if all weights of a module in $\mathcal O_q(\mathfrak {sl}_2)$ are less than $-1$ then it cannot be $\infty$-tilting, because  no Verma module $\Delta_q(n)$ with $n<-1$ has a socle, see Section \ref{sec sl_2}.
\end{rem}

\subsection{Finite dimensional tilting modules} We recall some key facts about finite dimensional tilting modules in $\bar {\mathcal O}_p$ and $\mathcal O_q$.  In the case $\bar {\mathcal O}_p$ this class identifies with the tilting modules for the almost simple algebraic group corresponding to $R$. These are well-studied, see e.g. \cite{Do}, \cite{RAG}, Section II.E. In the quantum case the analogous facts hold, see \cite{An92} and \cite{An00}.

Let $h$ denote the Coxeter number for $R$. 
\begin{enumerate}
\item For each  $\mu\in X^+$ there exists a unique (up to isomorphisms) indecomposable finite dimensional tilting module $\bar T_p(\mu)$ with weights $\leq \mu$ and with $\dim_K \bar T_p(\mu)_\mu = 1$, \cite{RAG}, II.E.6.
\item If $r>0$ and $\mu \in X_r$ then $\overline {St}_r \otimes  \bar T_p(\mu)$ is a tilting module with indecomposable summands having highest weights in $\{\nu \in (p^r-1)\rho + X^+ \mid \nu \leq (p^r-1)\rho + \mu \}$, \cite{RAG}, II.E.8
\item Suppose $p \geq 2h-2$. If $\lambda \in X_r$ we set $\hat \lambda = 2(p^r-1)\rho + w_0\lambda$. Then $\bar T_p(\hat \lambda)_{\mid _{\bar u_1}}$ is an indecomposable projective (and injective) $\bar u_1$-module. Its head and socle are isomorpic to $\bar L_p(\lambda)_{\mid _{\bar u_1}}$, \cite {RAG}, II.E.9.

\item For each  $\mu\in X^+$ there exists a unique (up to isomorphisms) indecomposable finite dimensional tilting module $ T_q(\mu)$ with weights $\leq \mu$ and with $\dim_K T_q(\mu)_\mu = 1$, \cite{An92}, Section 4.
\item Note that $St_q = L_q((\ell -1)\rho)$ is tilting, i.e. $St_q = T_q((\ell -1)\rho)$. If $\mu \in X_\ell$ then $St_q \otimes T_q(\mu)$ is a finite dimensional tilting module with indecomposable summands $T_q(\nu)$, where $\nu \in (\ell - 1)\rho + X^+$ and $\nu \leq (\ell - 1)\rho + \mu$, \cite{An00}, Section 4.

\item Suppose $p \geq 2h-2$. If $\lambda \in X_\ell$ and $\hat \lambda = 2(\ell -1)\rho + w_0\lambda$ then $ T_q(\hat \lambda)_{\mid _{\bar u_q}}$ is an indecomposable projective (and injective) $\bar u_q$-module. Its head and socle are isomorpic to $ L_q(\lambda)_{\mid _{\bar u_q}}$, \cite {An01}, Proposition 5.7.
\end{enumerate}

We now prove how (1) - (3) lead to the following result.
\begin{thm} \label {tilt_p rel} Let $\lambda \in X^+ - \rho$ and take $r$ so big that $\lambda + \rho \in X_{p^r}$. Then we have
\begin{enumerate}
\item $\bar T_p(\lambda + p^r\rho) \otimes \bar \Delta_p(-\rho)^{(r)}$ is $\infty$-tilting,
\item Suppose $p \geq 2h-2$. Then $\bar T_p^{\infty}(\lambda) \simeq \bar T_p(\lambda + p^r\rho) \otimes \bar \Delta_p(-\rho)^{(r)}$.
\end{enumerate}
\end{thm}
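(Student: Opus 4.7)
The plan begins by establishing (1) via a Frobenius-twist argument, and then uses a classical tilting identity to deduce (2). Combining Theorem \ref{irr Verma} with iterated Steinberg (the identity $\bar L_p(-\rho)\simeq \overline{St}_r\otimes \bar L_p(-\rho)^{(r)}$ appearing in the proof of Theorem \ref{irr Verma}) yields $\bar \Delta_p(-\rho)\simeq \overline{St}_r\otimes \bar \Delta_p(-\rho)^{(r)}$. Since $\overline{St}_r$ is finite-dimensional and self-dual, the trivial module is a direct summand of $\overline{St}_r\otimes \overline{St}_r$, and tensoring the above identity with $\overline{St}_r$ presents $\bar \Delta_p(-\rho)^{(r)}$ as a direct summand of $\overline{St}_r\otimes \bar \Delta_p(-\rho)$. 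The latter module is $\infty$-tilting by Remark \ref{finite remark}, and the discussion preceding Lemma \ref{finite tensor} shows that direct summands of $\infty$-tilting modules are again $\infty$-tilting. Hence $\bar \Delta_p(-\rho)^{(r)}$ is $\infty$-tilting, and a final application of Remark \ref{finite remark} with the finite-dimensional self-dual tilting $\bar T_p(\lambda+p^r\rho)$ establishes (1).

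For (2), set $M := \bar T_p(\lambda+p^r\rho)\otimes \bar \Delta_p(-\rho)^{(r)}$. A direct weight-space count shows that $\lambda$ is the highest weight of $M$, with multiplicity one: the only decomposition $\lambda = \mu_1+\mu_2$ with $\mu_1$ a weight of the tilting factor and $\mu_2$ a weight of the twisted Verma forces $\mu_1 = \lambda+p^r\rho$ and $\mu_2 = -p^r\rho$, each one-dimensional. By Theorem \ref{tilt exist}, $\bar T_p^\infty(\lambda)$ therefore appears as the unique indecomposable summand of $M$ with highest weight $\lambda$. To continue, I would invoke the classical tilting identity
\[
\bar L_p(\lambda+\rho)\otimes \overline{St}_r \;\simeq\; \bar T_p(\lambda+p^r\rho),
\]
valid under the hypotheses $p\geq 2h-2$ and $\lambda+\rho\in X_{p^r}$: the left-hand side is then a tilting module, and its restriction to $\bar u_r$ is the indecomposable PIM with head $\bar L_p(\lambda+\rho)|_{\bar u_r}$ (a Steinberg-tensor-simple computation in line with property (3)), matching $\bar T_p(\lambda+p^r\rho)|_{\bar u_r}$. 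Substituting this identity into $M$ together with $\bar \Delta_p(-\rho)\simeq \overline{St}_r\otimes\bar \Delta_p(-\rho)^{(r)}$ yields $M\simeq \bar L_p(\lambda+\rho)\otimes \bar \Delta_p(-\rho)$, which is precisely the module from which $\bar T_p^\infty(\lambda)$ was extracted as the top-weight summand in the proof of Theorem \ref{tilt exist}.

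The last step is to rule out a non-trivial complementary summand, i.e.\ to show $M$ is indecomposable. For this I would use that $M$ is projective in $\bar{\mathcal O}_p$: $\bar \Delta_p(-\rho)^{(r)}$ is projective as a summand of the projective-injective $\overline{St}_r\otimes \bar \Delta_p(-\rho)$ (by Theorem \ref{spec proj_p} combined with the summand argument of (1)), and tensoring with the finite-dimensional $\bar T_p(\lambda+p^r\rho)$ preserves projectivity; likewise $M$ is self-dual. Any complementary summand must have highest weight strictly below $\lambda$, and a $\Hom(M,\bar L_p(\lambda))$ computation via Proposition \ref{homid}(2), combined with the fact that the head of the $\bar u_r$-PIM $\bar T_p(\lambda+p^r\rho)|_{\bar u_r}$ is irreducible, forces this summand to vanish, yielding $M\simeq \bar T_p^\infty(\lambda)$. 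The main obstacle is the classical identity $\bar L_p(\lambda+\rho)\otimes \overline{St}_r\simeq \bar T_p(\lambda+p^r\rho)$: establishing both the tilting structure of this tensor and its indecomposability is where the hypothesis $p\geq 2h-2$ is essential, ultimately through lifting the $\bar u_r$-PIM structure to the algebraic group.
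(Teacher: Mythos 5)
The central step in your proof of part (1) is wrong: the trivial module is \emph{not} a direct summand of $\overline{St}_r \otimes \overline{St}_r$ in characteristic $p$. The obvious candidate splitting, coevaluation followed by evaluation, composes to multiplication by $\dim \overline{St}_r = p^{rN}$, which vanishes in $K$; and property (2) of Section 7.2 says every indecomposable summand of $\overline{St}_r \otimes \bar T_p(\mu)$ (with $\mu \in X_{p^r}$) has highest weight $\geq (p^r-1)\rho$, so for $\mu = (p^r-1)\rho$ the tilting module $T(0) = K$ cannot occur in $\overline{St}_r \otimes \overline{St}_r$. It follows that your claim that $\bar\Delta_p(-\rho)^{(r)}$ is a direct summand of $\overline{St}_r \otimes \bar\Delta_p(-\rho)$ — and hence that it is $\infty$-tilting and projective — is not established, and is in fact false. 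Indeed, $\bar\Delta_p(-\rho)^{(r)}$ has weight support contained in $p^r X$, so it misses the weight $-p^r\rho - \alpha$ for any simple root $\alpha$; but any module with a finite $\Delta$-filtration has character $q^-\cdot\sum_i e^{\mu_i+\rho}$, and if its highest weight is $-p^r\rho$ the summand corresponding to the top $\mu_i$ already contributes a nonzero coefficient at $-p^r\rho-\alpha$. So $\bar\Delta_p(-\rho)^{(r)}$ admits no $\Delta$-filtration and is not $\infty$-tilting. Since your proofs of both (1) and the indecomposability step of (2) lean on these claims (projectivity of $\bar\Delta_p(-\rho)^{(r)}$, and of $M$), the argument breaks down.

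The repair — which is the paper's actual route — is to treat the \emph{whole} tensor $M = \bar T_p(\lambda + p^r\rho)\otimes\bar\Delta_p(-\rho)^{(r)}$ at once, rather than trying to split off $\bar\Delta_p(-\rho)^{(r)}$. By property (2) of Section 7.2, $\bar T_p(\lambda + p^r\rho)$ is a direct summand of $\bar T_p(\lambda+\rho)\otimes\overline{St}_r$; tensoring this containment with $\bar\Delta_p(-\rho)^{(r)}$ and using the identity $\overline{St}_r\otimes\bar\Delta_p(-\rho)^{(r)}\simeq\bar\Delta_p(-\rho)$ (which you correctly derive from Theorem \ref{irr Verma} and Corollary \ref{iterate}), one finds that $M$ is a direct summand of $\bar T_p(\lambda+\rho)\otimes\bar\Delta_p(-\rho)$, which is $\infty$-tilting by Lemma \ref{finite tensor} and Remark \ref{finite remark}. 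That gives (1). For (2) the paper does not pass through the identity $\bar L_p(\lambda+\rho)\otimes\overline{St}_r\simeq\bar T_p(\lambda+p^r\rho)$ that you propose; it reads off from property (3) of Section 7.2 that $\bar T_p(\lambda+p^r\rho)$ has simple socle as a $\bar u_r$-module, hence $M$ has simple $\bar U_K$-socle and is therefore indecomposable, and then the uniqueness in Theorem \ref{tilt exist} gives $M\simeq\bar T_p^\infty(\lambda)$. This avoids the projectivity discussion entirely.
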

\begin{proof}
By (2) above we have that $\bar T_p(\lambda + p^r\rho)$ is a summand of $\bar T_p(\lambda + \rho) \otimes \overline {St}_r$. Hence using Theorem \ref{STP_p} we get that
 $\bar T_p(\lambda + p^r\rho) \otimes \bar \Delta_p(-\rho)^{(r)}$ is a summand of $\bar T_p(\lambda + \rho) \otimes \bar \Delta_p(-\rho)$ and the first part of the theorem follows. 
 
Under the assumption $p \geq 2h-2$ we have from (3) above that $\bar T_p(\lambda +p^r \rho)$ has simple socle as $\bar u_{r}$-module. Therefore $ \bar T_p(\lambda + p^r\rho) \otimes \bar \Delta_p(-\rho)^{(r)}$ has simple $\bar U_K$-socle. Hence it is in particular indecomposable. Since it has highest weight $\lambda$ it is therefore  isomorphic to $\bar T_p^\infty(\lambda)$.
 
 \end{proof}

\begin{rem} S. Donkin has conjectured that the assumption $p \geq 2h-2$ in (3) above is unneccessary. However, Bendel, Nakano, Pillen and Sobaje found a counterexample  (for $p=2$ and $R$ of type $G_2$) in \cite{BNPS1}. In a recent preprint \cite {BNPS2} the same authors have verified Donkin's conjecture in several low rank cases and announced further counter examples for $p=2$  (type $B_3$) and $p=3$ (type $C_3$).

Whenever, the conjecture holds we can omit the assumption on $p$ in Theorem \ref{tilt_p rel} (2) as well as in Theorem \ref{tilt_q rel} (2) below. -  See also \cite{BNPS2} for some background on Donkin's conjecture.
\end{rem}

Next we shall prove that (4) - (6) give analogous results in $\mathcal O_q$.
\begin{thm} \label{tilt_q rel} Let $\lambda \in X^+ - \rho$ and take $r$ so big that $\lambda +\rho \in X_{\ell p^r}$. Then
\begin{enumerate}
\item $T_q(\lambda + \ell p^r \rho) \otimes (\bar \Delta_p(-\rho)^{(r)})^{[\ell]}$ is $\infty$-tilting,
\item Suppose $p \geq 2h-2$. Then $ T_q^{\infty}(\lambda) \simeq  T_q(\lambda + \ell p^r \rho) \otimes (\bar \Delta_p(-\rho)^{(r)})^{[\ell]}$.
\end{enumerate}
\end{thm}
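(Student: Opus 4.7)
My plan is to mirror the proof of Theorem \ref{tilt_p rel}, swapping the modular ingredients for their quantum analogues provided by facts (4)--(6), the quantum Steinberg tensor product theorem, and the identifications of the special Verma module $\Delta_q(-\rho)$ established in Section~6.

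For part (1), I would begin by noting that $T_q((\ell p^r - 1)\rho) = L_q((\ell p^r - 1)\rho)$ is simple and, by iterated application of Theorem \ref{STP_q}, factors as $St_q \otimes \overline{St}_r^{[\ell]}$. Since tensor products of finite dimensional tilting modules for $U_q$ are tilting (see \cite{An92}), the module $T_q(\lambda + \rho) \otimes T_q((\ell p^r - 1)\rho)$ is tilting, and its unique summand with highest weight $\lambda + \rho + (\ell p^r - 1)\rho = \lambda + \ell p^r\rho$ is $T_q(\lambda + \ell p^r\rho)$. Tensoring both sides by $(\bar\Delta_p(-\rho)^{(r)})^{[\ell]}$ and using the two identifications
\[
\overline{St}_r \otimes \bar\Delta_p(-\rho)^{(r)} \simeq \bar\Delta_p(-\rho), \qquad St_q \otimes \bar\Delta_p(-\rho)^{[\ell]} \simeq \Delta_q(-\rho),
\]
the first of which comes from $\bar L_p(-\rho) = \bar\Delta_p(-\rho)$ combined with Corollary \ref{iterate}, and the second of which is established in the proof of Theorem \ref{irr Verma_q}, I would deduce that $T_q(\lambda + \ell p^r\rho) \otimes (\bar\Delta_p(-\rho)^{(r)})^{[\ell]}$ is a direct summand of $T_q(\lambda + \rho) \otimes \Delta_q(-\rho)$. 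The latter is $\infty$-tilting by Lemma \ref{finite tensor} and Remark \ref{finite remark} (using self-duality of the indecomposable tilting module $T_q(\lambda + \rho)$), and since any summand of an $\infty$-tilting module is itself $\infty$-tilting by the discussion following Definition 7.1, this yields (1).

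For part (2), assume $p \geq 2h - 2$. By Theorem \ref{tilt exist} it suffices to show that our module is indecomposable with highest weight $\lambda$, and I would achieve this by exhibiting a simple $U_q$-socle, in exact analogy with Theorem \ref{tilt_p rel}(2). The key step is to prove that $T_q(\lambda + \ell p^r\rho)$ has simple socle upon restriction to the appropriate ``hybrid'' small subalgebra of $U_q$ combining $u_q$ with the $F_q$-preimage of $\bar u_r \subset \bar U_K$. The hypothesis $\lambda + \rho \in X_{\ell p^r}$ places the weight $\lambda + \ell p^r\rho$ in the PIM range $(\ell p^r - 1)\rho + X_{\ell p^r}$, and the combination of fact (6) (quantum PIM statement for $u_q$) with the $\bar u_r$-version of fact (3) (modular PIM statement) yields the desired simple-socle statement. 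Once this is in place, tensoring with $(\bar\Delta_p(-\rho)^{(r)})^{[\ell]}$---which is trivial as a module for the hybrid subalgebra---preserves simplicity of the small-algebra socle, and the Steinberg tensor product theorem identifies the $U_q$-structure on this socle as a single simple $U_q$-module. The $U_q$-socle, being contained in this, is therefore simple, forcing indecomposability.

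The main obstacle is the hybrid PIM statement invoked in (2): it amounts to a simultaneous quantum-and-modular Donkin-type theorem at level $\ell p^r$, extending fact (6) beyond the $\ell$-restricted range. Under $p \geq 2h - 2$ the separate quantum and modular Donkin statements are in place, but splicing them together for a tilting module of $U_q$ requires a careful Steinberg-type factorisation of indecomposable tilting modules along the quantum Frobenius; this is the genuinely delicate step. Once it is available the rest of (2) is formal, and the combination with (1) gives the promised isomorphism $T_q^\infty(\lambda) \simeq T_q(\lambda + \ell p^r\rho) \otimes (\bar\Delta_p(-\rho)^{(r)})^{[\ell]}$.
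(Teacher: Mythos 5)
Part (1) of your proposal is correct and is essentially the paper's own argument: you identify $T_q((\ell p^r-1)\rho)=St_q\otimes \overline{St}_r^{[\ell]}$, realise $T_q(\lambda+\ell p^r\rho)$ as the summand with maximal weight of $T_q(\lambda+\rho)\otimes St_q\otimes \overline{St}_r^{[\ell]}$, and then use $\overline{St}_r\otimes\bar\Delta_p(-\rho)^{(r)}\simeq\bar\Delta_p(-\rho)$ and $St_q\otimes\bar\Delta_p(-\rho)^{[\ell]}\simeq\Delta_q(-\rho)$ to exhibit $T_q(\lambda+\ell p^r\rho)\otimes(\bar\Delta_p(-\rho)^{(r)})^{[\ell]}$ as a summand of the $\infty$-tilting module $T_q(\lambda+\rho)\otimes\Delta_q(-\rho)$. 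This is exactly how the paper proves (1).

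For part (2) you adopt the right strategy (simple socle over the relevant small subalgebra forces indecomposability, and then Theorem \ref{tilt exist} gives the isomorphism with $T_q^\infty(\lambda)$), but the proof is not complete: the ``hybrid PIM'' claim --- that $T_q(\lambda+\ell p^r\rho)$ has simple socle upon restriction to the subalgebra generated by $u_q$ and the $F_q$-preimage of $\bar u_r$ --- is precisely the decisive step, and you explicitly leave it unproved, calling it the genuinely delicate point. This is a genuine gap: facts (6) and (3) only cover the $\ell$-restricted, respectively $p^r$-restricted, ranges, and splicing them requires a Steinberg-type factorisation of the indecomposable tilting module with highest weight $\lambda+\ell p^r\rho$ along the quantum Frobenius; this is where the hypothesis $p\geq 2h-2$ (Donkin-type statements) really enters and it cannot simply be asserted. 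The paper closes exactly this gap by invoking \cite{An01}, Corollary 5.8, in combination with (6) and (3); that corollary supplies the needed factorisation/socle statement, after which, as you say, tensoring with $(\bar\Delta_p(-\rho)^{(r)})^{[\ell]}$ (trivial over the small subalgebra) and the Steinberg tensor product theorem finish the argument. So your outline matches the paper's route, but as written part (2) rests on an unproved intermediate result; citing or proving the analogue of \cite{An01}, Corollary 5.8 would repair it.
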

\begin{proof}
This proof runs parallel to the proof of Theorem \ref{tilt_p rel}: Note that $\lambda + \rho \in X^+$ and that $\lambda + \ell p^r \rho$ is the maximal weight of $T_q(\lambda + \rho) \otimes St_q \otimes \overline {St}_{r}^{[\ell]}$. This implies that the $\infty$-tilting module $T_q(\lambda + \rho) \otimes \bar \Delta_q(-\rho) \simeq T_q(\lambda + \rho) \otimes St_q \otimes  (\overline {St}_{r} \otimes \bar \Delta_p(-\rho)^{(r)})^{[\ell]}$ contains  $T_q(\lambda + \ell p^r \rho) \otimes (\bar \Delta_p(-\rho)^{(r)})^{[\ell]}$ as a summand. Hence the first part of the theorem holds.

To check the second part it is enough to verify that $T_q(\lambda + \ell p^r \rho) \otimes (\bar \Delta_p(-\rho)^{(r)})^{[\ell]}$ has simple socle. This follows from (6) and (3) above combined with \cite{An01}, Corollary 5.8.

\end{proof}

\section{Projective and injective modules}
We preserve the assumptions on $\ell$ and $p$ from the previous sections. So far we have found one projective module (namely the special Verma module) in each of  the categories $\bar {\mathcal O}_p$ and $\mathcal O_q$. In the present section we use this projective module to obtain projective covers and injective envelopes of all simple modules with antidominant highest weights. We also show that the same cannot be done when the highest weight is not antidominant.

It turns out that the projective covers we obtain are also indecomposable  $\infty$-tilting modules. This leads to a reciprocity law relating the Verma multiplicities in indecomposable $\infty$-tilting modules to multiplicities in the $(p^r,\Delta)$-, respectively $(\ell,p^r,\Delta)$-filtration of the Verma modules.

\subsection{Projectives and injectives in $\bar {\mathcal O}_p$}

Set $X^- = \{\lambda \in X \mid \langle \lambda, \alpha^\vee \rangle \leq -1 \text { for all } \alpha \in S\}$. This is the set of antidominant weights or, more precisely, the closure of the antidominant cone in $X$.
\begin{thm} \label{proj cover}
For each $\lambda \in X^-$ there exists a projective indecomposable module $\bar P_p(\lambda ) \in \bar {\mathcal O}_p$ with head $\bar L_p(\lambda)$. Moreover, $\bar P_p(\lambda)$ is  also the injective envelope of $\bar L_p(\lambda)$ and coincides with the $\infty$-tilting module $\bar T_p^\infty (w_0 \cdot \lambda)$.
\end{thm}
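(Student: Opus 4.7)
The plan is to construct $\bar P_p(\lambda)$ as an indecomposable direct summand of the projective-injective module $P_\lambda := \bar L_p(w_0 \cdot \lambda + \rho) \otimes \bar \Delta_p(-\rho)$ and identify it with $\bar T_p^\infty(w_0 \cdot \lambda)$. Since $\lambda \in X^-$, we have $w_0 \cdot \lambda + \rho = w_0(\lambda+\rho) \in X^+$, so $\bar L_p(w_0 \cdot \lambda + \rho)$ is finite-dimensional. By Theorem \ref{spec proj_p}, $\bar \Delta_p(-\rho)$ is both projective and injective, and both properties are preserved under tensoring with a finite-dimensional module via the adjunction $\Hom(F \otimes -, -) \simeq \Hom(-, F^* \otimes -)$. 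So $P_\lambda$ is projective-injective. Moreover, since $\bar \Delta_p(-\rho) = \bar L_p(-\rho)$ is simple (hence self-dual under $D_p$) and $D_p$ commutes with tensor products while fixing simples, $P_\lambda$ is self-dual.

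Next I compute $\dim_K \Hom(P_\lambda, \bar L_p(\lambda))$ by tensor-hom adjunction and $\bar L_p(w_0(\lambda+\rho))^* \simeq \bar L_p(-(\lambda+\rho))$:
\[
\Hom(P_\lambda, \bar L_p(\lambda)) \simeq \Hom\bigl(\bar \Delta_p(-\rho),\, \bar L_p(-(\lambda+\rho)) \otimes \bar L_p(\lambda)\bigr).
\]
The tensor product on the right has highest weight $-(\lambda+\rho) + \lambda = -\rho$ with multiplicity one, and the corresponding highest-weight vector is automatically $\bar B_K$-invariant; by \eqref{Verma prop} the Hom space is one-dimensional. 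Hence $\bar L_p(\lambda)$ appears exactly once in the head of $P_\lambda$, and the decomposition of $P_\lambda$ into indecomposable projective summands contains a unique summand with head $\bar L_p(\lambda)$; call it $\bar P_p(\lambda)$. It is projective and injective (being a summand of $P_\lambda$), and as a summand of the $\infty$-tilting module $P_\lambda$ it is itself an indecomposable $\infty$-tilting module. By Theorem \ref{tilt exist}, $\bar P_p(\lambda) \simeq \bar T_p^\infty(\eta)$ for a unique $\eta \in X^+ - \rho$. Since $D_p$ preserves characters and sends indecomposable $\infty$-tilting modules to indecomposable $\infty$-tilting modules, the uniqueness in Theorem \ref{tilt exist} forces $D_p(\bar T_p^\infty(\eta)) \simeq \bar T_p^\infty(\eta)$; hence $\bar P_p(\lambda)$ is self-dual, its socle equals its head $\bar L_p(\lambda)$, and $\bar P_p(\lambda)$ is simultaneously the injective envelope of $\bar L_p(\lambda)$.

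It remains to identify $\eta = w_0 \cdot \lambda$. The weight $w_0 \cdot \lambda$ is the maximum weight of $P_\lambda$ and occurs with multiplicity one, so $\bar T_p^\infty(w_0 \cdot \lambda)$ appears in $P_\lambda$ with multiplicity one; it suffices to show that $\bar T_p^\infty(w_0 \cdot \lambda)$ has head $\bar L_p(\lambda)$, since the multiplicity-one result above then forces $\bar T_p^\infty(w_0 \cdot \lambda) \simeq \bar P_p(\lambda)$. This is the main technical obstacle. By the self-duality of $\bar T_p^\infty(w_0 \cdot \lambda)$, the head coincides with the socle, and since $\bar \Delta_p(w_0 \cdot \lambda)$ sits inside $\bar T_p^\infty(w_0 \cdot \lambda)$ as the bottom of its $\Delta$-filtration, it suffices to embed $\bar L_p(\lambda) \hookrightarrow \bar \Delta_p(w_0 \cdot \lambda)$, i.e.\ to show the socle of $\bar \Delta_p(w_0 \cdot \lambda)$ is $\bar L_p(\lambda)$. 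The hardest part is this last claim; one would produce a $\bar B_K$-primitive vector of weight $\lambda$ in $\bar \Delta_p(w_0 \cdot \lambda)$ by tracing the lowest-weight vector of $\bar L_p(w_0 \cdot \lambda + \rho)$ against the canonical generator of $\bar \Delta_p(-\rho)$ (which together produce the top $\bar \Delta_p(\lambda)$-quotient in the $\Delta$-filtration of $P_\lambda$), then use the strong linkage principle (Theorem \ref{SLP_p}) to control the composition factors of $\bar \Delta_p(w_0 \cdot \lambda)$ and so identify its socle.
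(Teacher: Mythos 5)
Your overall structure matches the paper's: both build on the fact that $\bar \Delta_p(-\rho)$ is projective-injective (Theorem \ref{spec proj_p}), take the big projective-injective $\bar L_p(w_0(\lambda+\rho)) \otimes \bar \Delta_p(-\rho)$, and aim to identify its relevant summand with $\bar T_p^\infty(w_0\cdot\lambda)$. Your Hom-dimension computation showing $\dim \Hom(P_\lambda, \bar L_p(\lambda)) = 1$ (so that the summand hitting $\bar L_p(\lambda)$ is unique) is a pleasant addition that the paper does not spell out; the paper simply \emph{defines} $\bar P_p(\lambda) = \bar T_p^\infty(w_0\cdot\lambda)$ and then checks it has the desired properties.

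However, your proof has a genuine gap at precisely the point you flag as ``the hardest part.'' You need $\bar L_p(\lambda)$ to be a submodule of $\bar\Delta_p(w_0\cdot\lambda)$, and you propose to get this by producing a $\bar B_K$-primitive vector of weight $\lambda$ via ``tracing vectors'' and then invoking the strong linkage principle (Theorem \ref{SLP_p}). This does not work. The weight-$\lambda$ vector you describe (lowest weight of $\bar L_p(w_0(\lambda+\rho))$ tensored with the generator of $\bar\Delta_p(-\rho)$) generates the \emph{top} quotient $\bar\Delta_p(\lambda)$ of the $\Delta$-filtration of $P_\lambda$, not anything living inside the \emph{bottom} submodule $\bar\Delta_p(w_0\cdot\lambda)$. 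More fundamentally, the strong linkage principle only constrains which $\mu$ can occur in a $(p^r,\Delta)$-filtration of $\bar\Delta_p(w_0\cdot\lambda)$ (namely $\mu \uparrow w_0\cdot\lambda$); it says nothing about which term sits at the bottom, i.e.\ about the socle. The paper closes this gap with a concrete input you are missing: for $r$ large enough that $w_0\cdot\lambda \in X_{p^r}$, the baby Verma module $\tilde\Delta_r(w_0\cdot\lambda)$ has simple socle $\tilde L_{p^r}(\lambda)$ (this is \cite{RAG}, Proposition II.9.6). Feeding this into the construction of the $(p^r,\Delta)$-filtration in Proposition \ref{p-filt} shows that the bottom subquotient of $\bar\Delta_p(w_0\cdot\lambda)$ is $\bar L_p(\lambda + p^r\rho) \otimes \bar\Delta_p(-\rho)^{(r)}$, which by Theorem \ref{irr Verma} and Steinberg's tensor product theorem is exactly $\bar L_p(\lambda)$. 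You should replace your sketch with this argument.
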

\begin{proof} Let $\lambda \in X^-$. Then we define 
\begin{equation} \label{def proj_p}
\bar P_p(\lambda) = \bar T_p^\infty(w_0 \cdot \lambda).
\end{equation}
The theorem follows once we prove that $ \bar T_p^\infty(w_0 \cdot \lambda)$ is projective and has $\bar L_p(\lambda)$ as a quotient. 

By construction, see the proof of Theorem \ref{tilt exist}, $\bar T_p^\infty(w_0\cdot \lambda)$ is a summand of $\bar L_p(w_0(\lambda + \rho)) \otimes \bar \Delta_p(-\rho)$. Now Theorem \ref{spec proj_p} says that $\bar \Delta_p(-\rho)$ is projective and hence so are $\bar L_p(w_0(\lambda + \rho)) \otimes \bar \Delta_p(-\rho)$ and its summand $\bar T_p^\infty(w_0\cdot \lambda)$.

We claim that $\bar L_p(\lambda)$ is a submodule of $\bar T_p^\infty(w_0\cdot\lambda)$. As $\bar T_p^\infty(w_0\cdot\lambda)$ is selfdual 
this is equivalent to $\bar L_p(\lambda)$ being a quotient. Now the highest weight of $\bar T_p^\infty(w_0\cdot\lambda)$ is $w_0\cdot\lambda$
and therefore $\bar\Delta_p(w_0\cdot \lambda)$ is a submodule. It thus suffices to prove that $\bar L_p(\lambda) \subset \bar \Delta_p(w_0\cdot \lambda)$. 
However, if we choose $r$ so large that $w_0\cdot \lambda \in X_{p^r}$ then $\tilde L_{p^r}(\lambda)$ is the socle of $\tilde \Delta_{r}(w_0\cdot \lambda)$, see \cite{RAG}, Proposition II.9.6.
 It follows that $\bar L(\lambda + p^r\rho) \otimes \bar \Delta(-\rho)^{(r)}$ is the first term in a $(p^r,\Delta)$-filtration of $\bar \Delta_p(w_0\cdot \lambda)$, see Proposition \ref{p-filt}. However, by Theorem \ref{SLP_p} and Theorem \ref{irr Verma} we have $\bar L_p(\lambda + p^r\rho) \otimes \bar \Delta_p(-\rho)^{(r)} \simeq \bar L_p(\lambda)$.
\end{proof}

\begin{cor} If $\lambda \in -\rho + X^+$ then the socle of $\bar \Delta_p(\lambda)$ equals $\bar L_p(w_0 \cdot \lambda)$.
\end{cor}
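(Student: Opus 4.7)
Write $\mu = w_0 \cdot \lambda$. The hypothesis $\lambda \in -\rho + X^+$ is equivalent to $\lambda+\rho \in X^+$, hence $w_0(\lambda+\rho) \in -X^+$, so $\mu + \rho = w_0(\lambda+\rho)$ is antidominant, i.e.\ $\mu \in X^-$. Thus Theorem \ref{proj cover} applies to $\mu$: the indecomposable module $\bar P_p(\mu) = \bar T_p^\infty(w_0 \cdot \mu) = \bar T_p^\infty(\lambda)$ is the injective envelope of $\bar L_p(\mu) = \bar L_p(w_0 \cdot \lambda)$. In particular,
$$\mathrm{soc}(\bar T_p^\infty(\lambda)) \simeq \bar L_p(w_0 \cdot \lambda).$$

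Next, I would observe that $\bar \Delta_p(\lambda)$ embeds as a submodule of $\bar T_p^\infty(\lambda)$. Indeed, $\lambda$ is the unique maximal weight of $\bar T_p^\infty(\lambda)$, it occurs with multiplicity $1$, and in any $\Delta$-filtration exactly one successive quotient is $\bar \Delta_p(\lambda)$. A standard reordering (a highest-weight vector $v \in \bar T_p^\infty(\lambda)_\lambda$ is killed by $\bar U_K^+$ because $\lambda$ is maximal, so $\bar U_K v$ is a cyclic quotient of $\bar \Delta_p(\lambda)$; composing with the projection onto the $\bar \Delta_p(\lambda)$-quotient in the filtration gives a map $\bar \Delta_p(\lambda) \twoheadrightarrow \bar U_K v \twoheadrightarrow \bar \Delta_p(\lambda)$ that is the identity on the $\lambda$-weight space, hence an isomorphism) then yields $\bar \Delta_p(\lambda) \hookrightarrow \bar T_p^\infty(\lambda)$. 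Consequently
$$\mathrm{soc}(\bar \Delta_p(\lambda)) \subset \mathrm{soc}(\bar T_p^\infty(\lambda)) \simeq \bar L_p(w_0 \cdot \lambda).$$

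For the reverse inclusion, I would just recycle the argument from the proof of Theorem \ref{proj cover}: it was shown there that for any $\nu \in X^-$ one has $\bar L_p(\nu) \hookrightarrow \bar \Delta_p(w_0 \cdot \nu)$, via Proposition \ref{p-filt} together with the description of the socle of the baby Verma module $\tilde \Delta_r(w_0 \cdot \nu)$ (\cite{RAG}, Proposition II.9.6) and the identification $\bar L_p(\nu + p^r\rho) \otimes \bar \Delta_p(-\rho)^{(r)} \simeq \bar L_p(\nu)$ coming from Theorem \ref{STP_p} and Theorem \ref{irr Verma}. Applying this with $\nu = \mu = w_0 \cdot \lambda \in X^-$ gives $\bar L_p(w_0 \cdot \lambda) \hookrightarrow \bar \Delta_p(\lambda)$, which is therefore contained in the socle. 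Combining the two inclusions with the fact that the ambient socle is simple yields $\mathrm{soc}(\bar \Delta_p(\lambda)) = \bar L_p(w_0 \cdot \lambda)$.

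The only step that requires any genuine work is the embedding $\bar \Delta_p(\lambda) \hookrightarrow \bar T_p^\infty(\lambda)$, but this is the standard highest-weight-vector argument applied to an indecomposable module with a $\Delta$-filtration and a unique maximal weight of multiplicity one; everything else is a direct application of Theorem \ref{proj cover} and of what was already established in its proof.
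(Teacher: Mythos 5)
Your proposal is correct and follows essentially the same route as the paper, whose entire proof is the observation that, as in the proof of Theorem \ref{proj cover}, $\bar\Delta_p(\lambda)$ embeds into $\bar P_p(w_0\cdot\lambda)=\bar T_p^\infty(\lambda)$, the injective envelope of $\bar L_p(w_0\cdot\lambda)$, so its socle is forced to be $\bar L_p(w_0\cdot\lambda)$. You merely make explicit the highest-weight embedding $\bar\Delta_p(\lambda)\hookrightarrow\bar T_p^\infty(\lambda)$ and use the inclusion $\bar L_p(w_0\cdot\lambda)\subset\bar\Delta_p(\lambda)$ from the proof of Theorem \ref{proj cover} in place of the essentiality of the injective envelope, which is an equivalent way to see that the socle is nonzero.
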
 

\begin{proof} As in the proof of Theorem \ref{proj cover} we get that $\bar \Delta_p(\lambda)$ is a submodule of $\bar P_p(w_0 \cdot \lambda)$. 
\end{proof}

We shall now show that the analogue of Theorem \ref{proj cover} fails for all irreducible modules with highest weight outside $X^-$.
\begin{thm}  If $P$ is a projective module in $\bar {\mathcal O}_p$ then $\Hom_{\bar {\mathcal O}_p}(P, \bar L_p(\lambda))
 = 0$ for all $\lambda \in X\setminus X^-$. Likewise, when  $\lambda \notin X^-$  an injective module $I$ in $\bar {\mathcal O}_p$ cannot contain $\bar L_p(\lambda)$.
\end{thm}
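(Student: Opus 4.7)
The plan is to prove the projective statement; the injective version then follows by applying the duality $D_p$ of Remark \ref{dual}, which is exact, involutive, sends injectives to projectives, and fixes simples up to isomorphism.

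Argue by contradiction: assume $\phi \colon P \to \bar L_p(\lambda)$ is nonzero with $P$ projective in $\bar {\mathcal O}_p$ and $\lambda \notin X^-$. After decomposing and replacing $P$ by an indecomposable summand, we may take $\phi$ surjective and $\bar L_p(\lambda)$ to be the unique simple quotient of $P$. Projectivity applied to $\bar \Delta_p(\lambda) \twoheadrightarrow \bar L_p(\lambda)$ produces a lift $\tilde \phi \colon P \to \bar \Delta_p(\lambda)$. Since $\phi$ hits $\bar L_p(\lambda)_\lambda$ nontrivially, $\tilde \phi$ hits the generating $\lambda$-weight line of $\bar \Delta_p(\lambda)$, hence is surjective. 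So $\bar \Delta_p(\lambda)$ is itself a quotient of $P$.

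The crux is then to pin down the head of $P$ using the classification of indecomposable projectives. By Theorem \ref{proj cover}, each $\bar P_p(\mu) = \bar T_p^\infty(w_0 \cdot \mu)$ for $\mu \in X^-$ is indecomposable projective and injective, self-dual (as a tilting module), with simple head and socle $\bar L_p(\mu)$. I claim every indecomposable projective in $\bar {\mathcal O}_p$ is of this form, which immediately forces $\lambda = \mu \in X^-$, contradicting our hypothesis. To prove the claim, use Theorem \ref{spec proj_p}: $\bar \Delta_p(-\rho)$ is projective, so for any finite-dimensional self-dual $L$ the module $L \otimes \bar \Delta_p(-\rho)$ is projective and $\infty$-tilting (Remark \ref{finite remark}). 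Any indecomposable summand of $L \otimes \bar \Delta_p(-\rho)$ is indecomposable $\infty$-tilting, and by the uniqueness half of Theorem \ref{tilt exist} it must be some $\bar T_p^\infty(\nu)$ with $\nu \in X^+ - \rho$, i.e.\ $\bar P_p(w_0 \cdot \nu)$ with $w_0 \cdot \nu \in X^-$.

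The main obstacle is the ``enough projectives'' step: showing that the given indecomposable projective $P$ actually occurs as a summand of some $L \otimes \bar \Delta_p(-\rho)$. The plan is to use the finiteness condition (2) in the definition of $\bar {\mathcal O}_p$ to bound the weights of $P$ by a finite set $\{\lambda_1,\dots,\lambda_r\}$, then pick a finite-dimensional dominant $L = \bar L_p(\eta)$ with $\eta$ large enough that the weights of $L$ include every $\lambda_j + \rho$; this makes the multiplication map $L \otimes \bar \Delta_p(-\rho) \to P$ surjective on each highest-weight line of $P$ and, combined with a standard generation argument using condition (3), gives a surjection onto $P$. Projectivity of $P$ splits this surjection, so Krull--Schmidt applied to $L \otimes \bar \Delta_p(-\rho)$ identifies $P$ with a summand $\bar T_p^\infty(\nu) = \bar P_p(w_0 \cdot \nu)$, whose head lies in $X^-$, completing the contradiction.
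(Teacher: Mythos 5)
Your overall strategy differs from the paper's, and it has a genuine gap at precisely the step you flag as the main obstacle. The paper does not classify indecomposable projectives and then read off their heads; instead it argues directly from the weight-boundedness axiom. From a nonzero map $P \to \bar L_p(\lambda)$ it produces, for every $r$, a nonzero map $P \to \bar L_p(\lambda^0) \otimes \bar\nabla_p(\lambda^1)^{(r)}$, realizes this module as the top piece of a $(p^r,\nabla)$-filtration of $\bar\nabla_p(\hat\lambda(r))$ with $\hat\lambda(r) = 2(p^r-1)\rho + w_0\lambda^0 + p^r\lambda^1$, lifts through the surjection using projectivity to obtain a nonzero map $P \to \bar\nabla_p(\hat\lambda(r))$, and concludes that $\hat\lambda(r)$ is a weight of $P$ for all $r$. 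A careful estimate then shows that when $\lambda \notin X^-$ these weights escape every cone $\{\nu \le \mu_i\}$, contradicting condition (2) in the definition of $\bar{\mathcal O}_p$.

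The gap in your argument is the claim that an indecomposable projective $P$ is necessarily a direct summand of $L \otimes \bar\Delta_p(-\rho)$ for some finite-dimensional self-dual $L$. There is no "multiplication map" $L \otimes \bar\Delta_p(-\rho) \to P$: by the universal property of Verma modules, a morphism $L \otimes \bar\Delta_p(-\rho) \to P$ is the same as a $\bar B_K$-highest-weight vector of weight $-\rho$ in $L^* \otimes P$, and there is no reason such a vector should exist, let alone generate. Choosing $L$ so that its weight set covers $\lambda_j + \rho$ does not produce a module map, and without an a priori surjection $L \otimes \bar\Delta_p(-\rho) \twoheadrightarrow P$ projectivity gives you nothing to split. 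Worse, what you would need to establish — that every indecomposable projective is one of the $\bar T_p^\infty(\nu)$ — is essentially equivalent to the theorem itself (once you know the heads of projectives are antidominant, the classification follows from Theorem \ref{proj cover}), so the route is circular unless you supply an independent argument; and indeed the paper explicitly notes after Lemma \ref{delta-nabla} that these categories do not have enough projectives, which is a warning sign that a "realize $P$ inside a known projective" strategy cannot be carried out in the naive way. Also note that your reduction to an indecomposable summand of $P$ with a well-defined simple head already uses a Krull–Schmidt-type decomposition for an arbitrary module of $\bar{\mathcal O}_p$, which is not available without further argument; the paper's proof never needs it.
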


\begin{proof}
Suppose that $P$ is a projective module in $\bar {\mathcal O}_p$. If there is a non-zero map $P \rightarrow \bar L_p(\lambda)$ for some $\lambda \in X$, then there is also for each $r \geq 1$ a non-zero map $P \rightarrow \bar L_p(\lambda^0) \otimes \bar \nabla_p(\lambda^1)^{(r)}$. Set $\hat \lambda = 2(p^r-1)\rho +w_0 \lambda^0 + p^r\lambda^1$. Then  $\tilde L_{p^r}(\lambda)$ is the head of $\tilde \nabla_{p^r}(\hat \lambda)$, see \cite{RAG}, Proposition II.9.6,  and we get that the $(p^r,\nabla)$-filtration of $\bar \nabla_p(\hat \lambda)$ constructed in Proposition\ref{p-filt} contains a surjection $\bar \nabla_p(\hat \lambda) \rightarrow \bar L_p(\lambda^0) \otimes \bar \nabla_p(\lambda^1)^{(r)}$. As $P$ is projective this results in a non-zero map $P \rightarrow \bar \nabla_p(\hat \lambda)$. Therefore $\hat \lambda $ must be a weight of $P$. We shall now prove that this is impossible if $\lambda \notin X^-$ and $r \gg 0$.

As we are going to vary the exponent of $p$ we introduce the following notation (for use in the rest of this proof)
$$ \lambda = \lambda^0(r) + p^r \lambda^1(r) \text { where } \lambda^0(r) \in X_r \text  { and } \lambda^1(r) \in X.$$
$$ \hat \lambda(r) = \widehat {\lambda^0(r)} + p^r \lambda^1(r) \text { where } \widehat {\lambda^0(r)} = 2 (p^r - 1) \rho +w_0 \lambda^0(r)$$

Suppose now $\lambda \in X\setminus X^-$. Then there exists $\alpha \in S$ such that $\langle \lambda, \alpha^\vee \rangle \geq 0$. Choose $r$ so big that $\omega = \lambda^1(r)$ satisfies $\langle \omega, \beta^\vee \rangle \in \{-1, 0\}$ for all $\beta \in S$. We have $\langle \omega, \alpha^\vee \rangle = 0$. Note also that $\omega$ is equal to $\lambda^1(r')$ for all $r'\geq r$.

We have $\omega^0(s) = (p^s-1)(- \omega)$ and $\omega^1(s) = \omega$. It follows that 
$$ \lambda^0(r+s) = \lambda^0(r) + p^r(1-p^s)\omega \text { and } \widehat {\lambda(r+s)} = 2(p^{r+s} -1)\rho + w_0 \lambda^0(r) + p^r(1-p^s)w_0(\omega) + p^{r+s} \omega.$$
We get from this
\begin{equation} \label{alpha}
\langle \hat \lambda(r+s), \alpha^\vee \rangle \geq 2(p^{r+s} -1) -(p^r-1) + p^r(1-p^s) = p^{r+s} - 1 ,
\end{equation}
and
\begin{equation} \label{beta}
\langle \hat \lambda(r+s), \beta^\vee \rangle \geq 2(p^{r+s} -1) -(p^r-1) + p^r(1-p^s)  - p^{r+s} = -1 \text { for all } \beta \in S\setminus \{\alpha\}.
\end{equation}
The inequalities (\refeq{alpha}) and (\refeq{beta}) show that $\hat \lambda(r+s) + \rho \in X^+$ for all $s$. Now since $P$ is a module in $\bar {\mathcal O}_p$ there exist $\mu_1, \mu_2, \cdots , \mu_n$ such that for any weight $\nu$ of $P $ we have $\nu \leq \mu_i$ for some $i$. But suppose $\hat \lambda(r+s) \leq \mu_i$. We have
$$ \mu_i = \sum_{\beta \in S} a_\beta \beta, \; \rho = \sum_{\beta \in S} b_\beta \beta, \text { and } \hat \lambda(r+s) + \rho =    \sum_{\beta \in S} c_\beta(s) \beta$$
for some $a_\beta \in \Q, b_\beta \in \Q_{>0}$ and $ c_\beta(s) \in \Q_{\geq 0}$. Then the inequality $\hat \lambda(r+s) \leq \mu_i$ is equivalent to
$$ c_\beta(s) - b_\beta \leq a_\beta \text { for all } \beta \in S.$$ 
Combining this inequality for $\beta = \alpha$ with (\refeq{alpha}) we get
$$ p^{r+s} \leq \langle \hat \lambda(r+s) + \rho, \alpha^\vee \rangle \leq 2 c_\alpha(s) \leq 2(a_\alpha + b_\alpha).$$
This is clearly impossible when $s \gg 0$.

\end{proof}

\subsection{Projectives and injectives in $\mathcal O_q$} 

The results in Section 8.1 all have direct analogues in $\mathcal O_q$. As the proofs for $\mathcal O_q$ go exactly as for $\bar {\mathcal O}_p$ we only give the statements.

\begin{thm} 
For each $\lambda \in X^-$ there exists a projective indecomposable module $ P_q(\lambda ) \in\mathcal O_q$ with head $ L_q(\lambda)$. Moreover, $P_q(\lambda)$ is self-dual and is therefore also the injective envelope of $ L_q(\lambda)$ and coincides with the $\infty$-tilting module $ T_q^\infty (w_0 \cdot \lambda)$.
\end{thm}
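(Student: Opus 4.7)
The plan is to adapt the proof of Theorem~\ref{proj cover} verbatim to the quantum setting, with the substitutions: Theorem~\ref{proj_q} in place of Theorem~\ref{spec proj_p}, Corollary~\ref{ell-p filt} in place of Proposition~\ref{p-filt}, and Theorem~\ref{irr Verma_q} in place of Theorem~\ref{irr Verma}.

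For $\lambda \in X^-$ the condition $\langle \lambda,\alpha^\vee\rangle \leq -1$ yields $\langle w_0 \cdot \lambda + \rho,\alpha^\vee\rangle = -\langle \lambda+\rho, w_0\alpha^\vee\rangle \geq 0$, so $w_0 \cdot \lambda \in X^+ - \rho$ and Theorem~\ref{tilt exist} supplies the indecomposable $\infty$-tilting module $T_q^\infty(w_0 \cdot \lambda)$; set $P_q(\lambda) := T_q^\infty(w_0 \cdot \lambda)$. The construction of $T_q^\infty$ in the proof of Theorem~\ref{tilt exist} realises $P_q(\lambda)$ as an indecomposable direct summand of $L_q(w_0(\lambda+\rho)) \otimes \Delta_q(-\rho)$. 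By Theorem~\ref{proj_q}, $\Delta_q(-\rho)$ is both projective and injective in $\mathcal O_q$, and tensoring with the finite dimensional module $L_q(w_0(\lambda+\rho))$ preserves both properties since $L_q(w_0(\lambda+\rho)) \otimes -$ has $L_q(w_0(\lambda+\rho))^* \otimes -$ as a two-sided adjoint. Hence $P_q(\lambda)$ is projective and injective. Self-duality is then immediate: $D_q$ swaps $\Delta$- and $\nabla$-filtrations, so $D_q(P_q(\lambda))$ is another indecomposable $\infty$-tilting module with highest weight $w_0 \cdot \lambda$, and uniqueness in Theorem~\ref{tilt exist} forces $D_q(P_q(\lambda)) \simeq P_q(\lambda)$. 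Combined with injectivity, this makes $P_q(\lambda)$ the injective envelope of its (simple) socle.

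It remains to identify the head (equivalently, by self-duality, the socle) of $P_q(\lambda)$ as $L_q(\lambda)$. Since $w_0 \cdot \lambda$ is the highest weight of $P_q(\lambda)$ and occurs with multiplicity one in its $\Delta$-filtration, the filtration can be ordered so that $\Delta_q(w_0 \cdot \lambda)$ sits at the bottom as a submodule; it therefore suffices to produce an embedding $L_q(\lambda) \hookrightarrow \Delta_q(w_0 \cdot \lambda)$. Choose $r$ so large that $\ell p^r$ exceeds each $|\langle \lambda,\alpha^\vee\rangle|$; this guarantees both that $\lambda$ is actually a weight of the relevant iterated baby Verma module and that $\lambda$ admits the triple Steinberg decomposition $\lambda = \lambda^0 + \ell(\lambda^1 + p^r\rho) + \ell p^r(-\rho)$ with $\lambda^0 \in X_\ell$ and $\lambda^1 + p^r\rho \in X_{p^r}$ (using that $\lambda \in X^-$ forces $\lambda^1 \in X^-$). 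Applying Corollary~\ref{ell-p filt} to $\Delta_q(w_0 \cdot \lambda)$, the bottom submodule of the resulting $(\ell,p^r,\Delta)$-filtration is obtained by applying $\Delta'_q$ followed by $\bar\Delta'_{p^r}$ (on the $[\ell]$-twisted factor) to the socle of the iterated baby Verma attached to $w_0 \cdot \lambda$, which by the quantum and modular analogues of \cite{RAG}, Proposition~II.9.6, is $\tilde L_\ell \otimes \tilde L_{p^r}$ of the appropriate periodic translate of $\lambda$. Running this through Theorems~\ref{STP_q} and \ref{STP_p} and collapsing the two special Verma factors via Theorems~\ref{irr Verma_q} and \ref{irr Verma} reassembles the bottom term as $L_q(\lambda^0) \otimes \bar L_p(\lambda^1 + p^r\rho)^{[\ell]} \otimes (\bar\Delta_p(-\rho)^{(r)})^{[\ell]} \simeq L_q(\lambda)$, giving $L_q(\lambda) \hookrightarrow \Delta_q(w_0 \cdot \lambda) \hookrightarrow P_q(\lambda)$ as desired.

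The main obstacle will be that last step: tracking the socle of the iterated baby Verma through the two induction functors $\Delta'_q$ and $\bar\Delta'_{p^r}$ and matching the two layers of periodicity $\tilde L_\ell(\mu+\ell\eta) \simeq \tilde L_\ell(\mu) \otimes \ell\eta$ and $\tilde L_{p^r}(\nu + p^r\zeta) \simeq \tilde L_{p^r}(\nu) \otimes p^r\zeta$ with the two levels of Steinberg's tensor product theorem. In the modular proof of Theorem~\ref{proj cover} a single iteration through the Frobenius suffices; in the quantum case one needs one extra iteration through the modular Frobenius as well, but the combinatorics is identical once $r$ is chosen so that the three-term Steinberg expansion of $\lambda$ terminates in $-\rho$.
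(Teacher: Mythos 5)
Your proposal is correct and follows essentially the same route as the paper, whose proof of this theorem is simply the statement that the argument for Theorem \ref{proj cover} carries over verbatim to $\mathcal O_q$: you take $P_q(\lambda)=T_q^\infty(w_0\cdot\lambda)$, get projectivity/injectivity from Theorem \ref{proj_q} via tensoring with a finite dimensional module, and reduce the head computation to an embedding $L_q(\lambda)\hookrightarrow\Delta_q(w_0\cdot\lambda)$ obtained from the socle of the baby Verma modules (the quantum and modular analogues of \cite{RAG}, II.9.6) fed through the filtrations of Section 4. Your extra pass through the modular Frobenius (the $(\ell,p^r,\Delta)$-refinement, with $\lambda^1+p^r\rho\in X_{p^r}$ and the special Verma factor collapsing by Theorem \ref{irr Verma}) is exactly the right adaptation and the weight bookkeeping you assert does check out, so this is the intended proof rather than a genuinely different one.
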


\begin{cor}
 If $\lambda \in -\rho + X^+$ then the socle of $ \Delta_q(\lambda)$ equals $ L_q(w_0 \cdot \lambda)$.
\end{cor}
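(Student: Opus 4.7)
The plan is to mimic the proof of Corollary 8.2, using Theorem 8.4 to realize $\Delta_q(\lambda)$ as a submodule of the projective cover $P_q(w_0\cdot\lambda)$ of $L_q(w_0\cdot\lambda)$, and then to read off the socle from that ambient module. First I would verify that the hypothesis $\lambda\in -\rho+X^+$ forces $w_0\cdot\lambda\in X^-$: since $w_0(\lambda+\rho)\in -X^+$, for each simple root $\alpha$ one has $\langle w_0\cdot\lambda,\alpha^\vee\rangle=\langle w_0(\lambda+\rho),\alpha^\vee\rangle-1\le -1$. Theorem 8.4 then supplies the indecomposable projective-injective module $P_q(w_0\cdot\lambda)$ and identifies it with the $\infty$-tilting module $T_q^\infty(w_0\cdot(w_0\cdot\lambda))=T_q^\infty(\lambda)$.

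Next I would embed $\Delta_q(\lambda)$ into $P_q(w_0\cdot\lambda)$. The module $T_q^\infty(\lambda)$ has $\lambda$ as its unique maximal weight, occurring with multiplicity one, so in any $\Delta$-filtration of $T_q^\infty(\lambda)$ exactly one successive quotient is $\Delta_q(\lambda)$. By the standard reordering argument --- a maximal highest weight among the Verma quotients can always be moved to the bottom of the filtration --- one may arrange the filtration so that its first nonzero term is $\Delta_q(\lambda)$ itself. This yields an inclusion $\Delta_q(\lambda)\hookrightarrow T_q^\infty(\lambda)\simeq P_q(w_0\cdot\lambda)$.

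Finally, Theorem 8.4 asserts that $P_q(w_0\cdot\lambda)$ is the injective envelope of $L_q(w_0\cdot\lambda)$, so it is indecomposable injective in $\mathcal O_q$ with simple socle $L_q(w_0\cdot\lambda)$; this socle is therefore essential and is contained in every nonzero submodule. Applied to the image of the embedding above it gives $L_q(w_0\cdot\lambda)\subseteq\operatorname{soc}\Delta_q(\lambda)$, and the reverse inclusion $\operatorname{soc}\Delta_q(\lambda)\subseteq\operatorname{soc}P_q(w_0\cdot\lambda)=L_q(w_0\cdot\lambda)$ is automatic. The step requiring care is the middle one: that $\Delta_q(\lambda)$ appears as a genuine submodule and not merely as a quotient piece of the $\Delta$-filtration. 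This rests on the reordering lemma for $\Delta$-filtrations, which in turn follows from the fact that the highest weights of Verma quotients are all bounded above by the overall maximal weight of the ambient module.
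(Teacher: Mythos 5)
Your argument is correct and follows essentially the same route as the paper: embed $\Delta_q(\lambda)$ into $P_q(w_0\cdot\lambda)\simeq T_q^\infty(\lambda)$ (using that $\lambda$ is the maximal weight, of multiplicity one, of the $\Delta$-filtered module $T_q^\infty(\lambda)$) and then read off the socle from the fact that $P_q(w_0\cdot\lambda)$ is the injective envelope of $L_q(w_0\cdot\lambda)$, so its simple socle is essential. The only difference is that you spell out the embedding step via the reordering lemma for $\Delta$-filtrations (justified by the vanishing of $\Ext^1$ between Verma modules in the relevant direction), which the paper simply asserts.
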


\begin{thm} \label{no proj_q} If $P_q$ is a projective module in $ {\mathcal O}_q$ then $\Hom_{{\mathcal O}_q}(P_q,  L_q(\lambda))
 = 0$ for all $\lambda \in X\setminus X^-$. Likewise, when  $\lambda \notin X^-$  an injective module $I$ in $ {\mathcal O}_q$ cannot contain $ L_q(\lambda)$.
\end{thm}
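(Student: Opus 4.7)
The plan is to mirror the proof of the modular analogue given in Section~8.1 essentially verbatim, with the duality $D_q$ from Remark~\ref{dual} reducing the injective half to the projective half: if $L_q(\lambda)\subset I$ with $I$ injective in $\mathcal{O}_q$, then $D_q(I)$ is projective and $L_q(\lambda)\simeq D_q(L_q(\lambda))$ appears as a quotient of $D_q(I)$, so the first half of the theorem forces $\lambda\in X^-$.

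For the projective half, suppose $P_q$ is projective in $\mathcal{O}_q$ with $\Hom_{\mathcal{O}_q}(P_q,L_q(\lambda))\neq 0$. Write $\lambda=\lambda^0+\ell\lambda^1$ with $\lambda^0\in X_\ell$, and further $\lambda^1=\nu+p^r\eta$ with $\nu\in X_{p^r}$, $\eta\in X$. By Theorem~\ref{STP_q} and Corollary~\ref{iterate} we have $L_q(\lambda)\simeq L_q(\lambda^0)\otimes\bar L_p(\nu)^{[\ell]}\otimes(\bar L_p(\eta)^{(r)})^{[\ell]}$, and since each simple module embeds into its dual Verma, we obtain a non-zero composite
$$P_q \longrightarrow L_q(\lambda^0)\otimes\bar L_p(\nu)^{[\ell]}\otimes(\bar\nabla_p(\eta)^{(r)})^{[\ell]}.$$
I would then realise the right-hand side as the head quotient in the $(\ell,p^r,\nabla)$-filtration --- the dual of Corollary~\ref{ell-p filt} via Remark~\ref{dual} --- of the dual Verma module $\nabla_q(\hat\lambda)$, where
$$\hat\lambda \;=\; 2(\ell-1)\rho+w_0\lambda^0+\ell\bigl(2(p^r-1)\rho+w_0\nu+p^r\eta\bigr).$$
This is the natural two-level analogue of the $\hat\lambda$ used in the modular proof: one $\mu\mapsto 2(m-1)\rho+w_0\mu$ operation at level $p^r$ (inside) and one at level $\ell$ (outside), matching the two-step Steinberg decomposition of Propositions~\ref{ell filt} and~\ref{p-filt}. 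Projectivity of $P_q$ then lifts the above composite to a non-zero map $P_q\to\nabla_q(\hat\lambda)$, forcing $\hat\lambda$ to occur as a weight of $P_q$.

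The final step is the weight-growth argument, which mirrors the inequalities of the modular proof line by line. Fix $\alpha\in S$ with $\langle\lambda,\alpha^\vee\rangle\geq 0$ and enlarge $r$ so that $\langle\eta,\beta^\vee\rangle\in\{-1,0\}$ for all $\beta\in S$; then repeat the whole construction with $r$ replaced by $r+s$ and let $s\to\infty$. The inner bracket in the formula for $\hat\lambda$ is, up to renaming, exactly the modular $\hat{}$-construction applied to $\lambda^1$ at level $p^{r+s}$, and so by the estimates~(\ref{alpha})--(\ref{beta}) it is dominant with $\alpha$-coordinate at least $p^{r+s}-1$. Multiplying by $\ell$ and adding the $s$-independent outer shift $2(\ell-1)\rho+w_0\lambda^0$, the same conclusion holds for $\hat\lambda+\rho$, giving $\langle\hat\lambda+\rho,\alpha^\vee\rangle\geq\ell(p^{r+s}-1)-C$ for some fixed $C$. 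Since any weight of $P_q\in\mathcal{O}_q$ is bounded above by one of finitely many fixed $\mu_1,\dots,\mu_n\in X$, the inequality $\hat\lambda\leq\mu_i$ becomes impossible once $s\gg 0$, the desired contradiction. The main obstacle is purely bookkeeping: verifying that the two nested $\mu\mapsto 2(m-1)\rho+w_0\mu$ constructions really do place $L_q(\lambda^0)\otimes\bar L_p(\nu)^{[\ell]}\otimes(\bar\nabla_p(\eta)^{(r)})^{[\ell]}$ as a head quotient of $\nabla_q(\hat\lambda)$, and that the outer multiplication by $\ell$ preserves the dominance and growth estimates established at the modular level.
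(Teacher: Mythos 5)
Your proposal is correct and follows essentially the paper's own route: the paper simply states that the proof in $\mathcal{O}_q$ runs exactly as the modular argument of Section~8.1, and your two-level weight $\hat\lambda = 2(\ell-1)\rho + w_0\lambda^0 + \ell\bigl(2(p^r-1)\rho + w_0\nu + p^r\eta\bigr)$ together with the head quotient of the $(\ell,p^r,\nabla)$-filtration of $\nabla_q(\hat\lambda)$ is precisely the intended transport of that argument (compare the use of $(\ell,p^r,\nabla)$-filtrations in the quantum reciprocity theorem of Section~8.3). The growth estimates (the $\alpha$-coordinate of $\hat\lambda+\rho$ being at least $\ell p^{r+s}-1$ while the remaining coordinates stay bounded below by $-1$) and the reduction of the injective half to the projective half via $D_q$ are both sound and match the paper.
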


\subsection{Reciprocity laws}
The correspondence between the projective covers of simple modules with antidominant highest weights and the indecomposable $\infty$-tilting modules with highest weights in the closure of the dominant chamber gives rise to some reciprocity laws. To deduce these we need the following result (valid by similar arguments also in $\bar {\mathcal O}_p$).

\begin{lem} \label{delta-nabla} Let $\lambda, \mu \in X$. Then
\begin{enumerate}
\item $\Hom_{\mathcal O_q} (\Delta_q(\lambda), \nabla_q(\mu)) \simeq \begin{cases} {K \text { if } \lambda = \mu,} \\ {0 \text { otherwise.}}\end{cases}$
\item $\Ext^1_{\mathcal O_q}(\Delta_q(\lambda), \nabla_q(\mu)) = 0.$
\end{enumerate}
\end{lem}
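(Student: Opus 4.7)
My plan is to handle the two parts in sequence, with the Hom computation (1) feeding into the Ext computation (2). For (1), I would start from the adjunction
$$\Hom_{\mathcal O_q}(\Delta_q(\lambda), M) \simeq \{m \in M_\lambda : E_i^{(r)} m = 0 \text{ for all } i, r \geq 1\}$$
coming from $\Delta_q(\lambda) = U_q \otimes_{B_q} K_\lambda$, so that the task reduces to detecting primitive vectors of weight $\lambda$ in $\nabla_q(\mu)$. The case $\lambda = \mu$ is immediate: $\nabla_q(\mu)_\mu$ is one-dimensional and every vector in it is primitive by weight reasons. If $\lambda \not\leq \mu$ the weight space itself is zero. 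The remaining case $\lambda < \mu$ I would dispatch by observing that $L_q(\mu)$ sits essentially in $\nabla_q(\mu)$: using $\nabla_q(\mu) = D_q(\Delta_q(\mu))$, a non-zero submodule $N \subseteq \nabla_q(\mu)$ dualises to a non-zero quotient $D_q N$ of $\Delta_q(\mu)$, which surjects onto the unique simple quotient $L_q(\mu)$; dualising back embeds $L_q(\mu)$ into $N$. A non-zero $\phi : \Delta_q(\lambda) \to \nabla_q(\mu)$ would then force $L_q(\mu) \subseteq \im(\phi)$, impossible since $\im(\phi)$ has all weights $\leq \lambda < \mu$.

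For (2), the principal step is to invoke the duality functor $D_q$ from Remark 4.9. Being exact and contravariant with $D_q \Delta_q(\nu) \simeq \nabla_q(\nu)$, it induces an isomorphism on Yoneda Ext groups
$$\Ext^1_{\mathcal O_q}(\Delta_q(\lambda), \nabla_q(\mu)) \simeq \Ext^1_{\mathcal O_q}(\Delta_q(\mu), \nabla_q(\lambda)).$$
After possibly swapping $\lambda$ and $\mu$, I may therefore assume $\lambda \not< \mu$ (i.e.\ either $\lambda \geq \mu$ or $\lambda$ and $\mu$ are incomparable). Under this assumption, in any extension $0 \to \nabla_q(\mu) \to E \to \Delta_q(\lambda) \to 0$ the weight $\lambda$ is maximal in $E$: any weight $\eta$ of $E$ occurs in $\nabla_q(\mu)$ (hence $\eta \leq \mu$) or in $\Delta_q(\lambda)$ (hence $\eta \leq \lambda$), and $\eta > \lambda$ would force $\lambda < \eta \leq \mu$, contradicting $\lambda \not< \mu$. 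Consequently any lift $\tilde v_\lambda \in E_\lambda$ of the generator $v_\lambda$ of $\Delta_q(\lambda)$ satisfies $E_i^{(r)} \tilde v_\lambda \in E_{\lambda + r\alpha_i} = 0$ for all $i$ and all $r \geq 1$, so $\tilde v_\lambda$ is primitive and, by part (1), defines a section $\Delta_q(\lambda) \to E$ splitting the sequence.

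The main obstacle would have been the case $\lambda < \mu$ of (2): in that situation $\lambda$ is not maximal in a general extension and a lift need not be primitive, so one would have to solve a cohomological lifting problem inside $\nabla_q(\mu)$. The role of $D_q$ is precisely to sidestep this difficulty, converting the hard case into its easier symmetric counterpart so that the elementary weight argument above handles every case.
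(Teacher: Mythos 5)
Your proof is correct and follows essentially the same strategy as the paper: detect primitive vectors via the universal property of Verma modules, and use the duality functor $D_q$ to dispose of the hard halves of the case analysis. In part (1) the paper handles the case $\lambda < \mu$ by a single dualization (getting $\mu \leq \lambda$ from the dual Hom-space), whereas you run a slightly longer argument showing $L_q(\mu)$ is essential in $\nabla_q(\mu)$ and hence sits in the image; both are correct and rest on the same duality, so the difference is cosmetic. Part (2) matches the paper's argument exactly.
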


Note that the $\Ext$ in (2) should be interpreted as Yoneda-Ext (we proved in Theorem \ref{no proj_q} that $\mathcal O_q$ does not have enough projectives/injectives).

\begin{proof}
This goes by the standard arguments:  The universal property of Verma modules (\refeq{Verma prop}) shows that if $\Hom_{\mathcal O_q} (\Delta_q(\lambda), \nabla_q(\mu)) \neq 0$ then $\mu \geq \lambda$. Dually, we get $\lambda \geq \mu$.  Also the statement $\Hom_{\mathcal O_q} (\Delta_q(\lambda), \nabla_q(\lambda)) = K$ follows immediately from  (\refeq{Verma prop}).

Now suppose we have an extension
\begin{equation} \label{split} 0 \rightarrow \nabla_q(\mu) \rightarrow E \rightarrow \Delta_q(\lambda) \rightarrow 0
\end{equation}
in $\mathcal O_q$. Unless $\mu  > \lambda$ we get from  (\refeq{Verma prop}) a homomorphism $\Delta_q(\lambda) \rightarrow E$, which splits (\refeq{split}). On the other hand, if $\mu > \lambda$ then the same argument splits the sequence dual to (\refeq{split}).

\end{proof}

Recall from Section 4.1 the notion of a $(p^r,\Delta)$-filtration for a module $M$ in $\bar {\mathcal O}_p$ and the notation $(M:\bar L_p(\mu^0) \otimes \bar \Delta_p(\mu^1)^{(r)})$ for the number of occurrences of $\bar L_p(\mu^0) \otimes \bar \Delta_p(\mu^1)^{(r)}$ in such a filtration. Note that if $\mu$ is antidominant and $r$ is large then $\bar L_p(\mu^0) \otimes \bar \Delta_p(\mu^1)^{(r)} = \bar L_p(\mu)$, see Corollary \ref{antidominant simple}. Dually we have $(p^r,\nabla)$-filtrations with analogous notation. Of course we have  $(M:\bar L_p(\mu^0) \otimes \bar \Delta_p(\mu^1)^{(r)}) =  (D_pM:\bar L_p(\mu^0) \otimes \bar \nabla_p(\mu^1)^{(r)})$.

 We get the following reciprocity law in $\bar {\mathcal O}_p$.
\begin{thm} \label{reciprocity_p}Let $ \lambda, \mu \in X$ and suppose $\lambda + \rho \in X^+$. Then
$$  (\bar T^\infty_p(\lambda) : \bar  \Delta_p(\mu)) = (\bar \Delta_p(\mu): \bar L_p(w_0\cdot \lambda)).$$
\end{thm}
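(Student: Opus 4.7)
By Theorem~\ref{proj cover} we have $\bar T^\infty_p(\lambda) = \bar P_p(w_0 \cdot \lambda)$, so the identity to establish is
\[
(\bar P_p(w_0 \cdot \lambda) : \bar \Delta_p(\mu)) = (\bar \Delta_p(\mu) : \bar L_p(w_0 \cdot \lambda)).
\]
Following the classical BGG recipe, the plan is to compute $\dim \Hom_{\bar{\mathcal O}_p}(\bar P_p(w_0 \cdot \lambda), \bar \nabla_p(\mu))$ in two different ways.

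For the first computation, $\bar T^\infty_p(\lambda)$ carries a finite $\Delta$-filtration with successive quotients $\bar \Delta_p(\nu_i)$. Lemma~\ref{delta-nabla} provides $\Hom(\bar \Delta_p(\nu), \bar \nabla_p(\mu)) = K \cdot \delta_{\nu,\mu}$ together with $\Ext^1_{\bar{\mathcal O}_p}(\bar \Delta_p(\nu), \bar \nabla_p(\mu)) = 0$, so the $\Hom$--$\Ext$ long exact sequences collapse and a straightforward induction on filtration length gives
\[
\dim \Hom(\bar P_p(w_0 \cdot \lambda), \bar \nabla_p(\mu)) = \#\{i : \nu_i = \mu\} = (\bar T^\infty_p(\lambda) : \bar \Delta_p(\mu)),
\]
the left-hand side of the reciprocity.

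For the second computation I use the projectivity of $\bar P_p(w_0 \cdot \lambda)$. Fix $r$ large enough that $(w_0 \cdot \lambda)^0 := p^r \rho + w_0 \cdot \lambda \in X_{p^r}$, so the stable $p^r$-adic expansion $w_0 \cdot \lambda = (w_0 \cdot \lambda)^0 + p^r(-\rho)$ holds. Dualizing the $(p^r,\Delta)$-filtration of $\bar \Delta_p(\mu)$ (Remark~\ref{dual}) gives a finite $(p^r,\nabla)$-filtration of $\bar \nabla_p(\mu)$ with layers $\bar L_p(\nu^0) \otimes \bar \nabla_p(\nu^1)^{(r)}$ appearing with multiplicities $m_\nu := (\bar \Delta_p(\mu) : \bar L_p(\nu^0) \otimes \bar \Delta_p(\nu^1)^{(r)})$. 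Exactness of $\Hom(\bar P_p(w_0 \cdot \lambda), -)$ produces
\[
\dim \Hom(\bar P_p(w_0 \cdot \lambda), \bar \nabla_p(\mu)) = \sum_\nu m_\nu \dim \Hom(\bar P_p(w_0 \cdot \lambda), \bar L_p(\nu^0) \otimes \bar \nabla_p(\nu^1)^{(r)}).
\]
Since $\bar P_p(w_0 \cdot \lambda)$ is the projective cover of the simple $\bar L_p(w_0 \cdot \lambda)$, the inner $\dim \Hom$ equals the composition multiplicity of $\bar L_p(w_0 \cdot \lambda)$ in the layer, which by Steinberg's theorem (Theorem~\ref{STP_p}) is $\delta_{\nu^0, (w_0 \cdot \lambda)^0} \cdot [\bar \Delta_p(\nu^1) : \bar L_p(-\rho)]$.

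The main obstacle is to show that for $r$ large only the single layer with $\nu^0 = (w_0 \cdot \lambda)^0$ and $\nu^1 = -\rho$ survives. Since $\bar L_p(-\rho) = \bar \Delta_p(-\rho)$ is the simple special Verma (Theorem~\ref{irr Verma}), strong linkage forces any $\nu^1$ with $[\bar \Delta_p(\nu^1) : \bar L_p(-\rho)] \neq 0$ to satisfy $\nu^1 \in W_p \cdot (-\rho) = -\rho + pQ$ with $\nu^1 \geq -\rho$. If $\nu^1 > -\rho$ strictly then $\rho + \nu^1 > 0$, so the baby weight
\[
\eta := (w_0 \cdot \lambda)^0 + p^r \nu^1 = w_0 \cdot \lambda + p^r(\rho + \nu^1)
\]
exceeds $\mu$ once $r$ is large enough (as $\mu$ is fixed while $p^r(\rho + \nu^1)$ grows without bound); then Theorem~\ref{SLP_r} gives $[\tilde \Delta_r(\mu) : \tilde L_{p^r}(\eta)] = 0$, so $m_\nu = 0$. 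Only finitely many $\nu^1 \leq \mu$ are relevant, so a single $r$ disposes of them all simultaneously. The surviving term, corresponding to $\nu^1 = -\rho$, gives layer $\bar L_p((w_0 \cdot \lambda)^0) \otimes \bar \nabla_p(-\rho)^{(r)} \cong \bar L_p(w_0 \cdot \lambda)$ together with inner $\dim \Hom = 1$, contributing exactly $(\bar \Delta_p(\mu) : \bar L_p(w_0 \cdot \lambda))$. Equating the two computations yields the reciprocity.
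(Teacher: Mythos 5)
Your proposal is correct and follows essentially the same route as the paper: both arguments compute $\dim_K\Hom_{\bar{\mathcal O}_p}(\bar P_p(w_0\cdot\lambda),\bar\nabla_p(\mu))$ in two ways, once via the $\Delta$-filtration of $\bar T^\infty_p(\lambda)$ together with Lemma \ref{delta-nabla}, and once by applying projectivity (Theorems \ref{proj cover} and \ref{spec proj_p}) to a $(p^r,\nabla)$-filtration of $\bar\nabla_p(\mu)$ with $r$ large. Your extra paragraph -- using the uniqueness in Steinberg's theorem plus the weight/linkage bound to show that for a single sufficiently large $r$ only the layer isomorphic to $\bar L_p(w_0\cdot\lambda)$ can receive a nonzero map from $\bar P_p(w_0\cdot\lambda)$ -- simply makes explicit a point the paper's proof leaves implicit, and is sound (indeed only the implication ``nonzero Hom from the projective cover forces $\bar L_p(w_0\cdot\lambda)$ to be a subquotient of the layer'' is needed, so no appeal to a composition-multiplicity formula for infinite-length modules is required).
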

\begin{proof}
Lemma \ref{delta-nabla} implies that  
$$(\bar T^\infty_p(\lambda) : \bar  \Delta_p(\mu)) = \dim_K \Hom_{\bar {\mathcal O}_p}(\bar T^\infty_p(\lambda), \bar \nabla_p(\mu)).$$ 
By Theorem \ref{proj cover} we may replace $\bar T^\infty_p(\lambda)$ by $\bar P_p(w_0 \cdot \lambda)$. The projectivity of $\bar P_p(w_0 \cdot \lambda)$ gives (when applying $\Hom_{\bar {\mathcal O}_p}( \bar P_p(w_0\cdot \lambda), -)$ to a $(p^r, \nabla)$-filtration of $\bar \nabla_p(\mu)$) the identity
$$  \dim_K \Hom_{\bar {\mathcal O}_p}(\bar P_p(w_0 \cdot \lambda), \bar \nabla_p(\mu)) = (\bar \nabla_p(\mu): \bar L_p(w_0\cdot \lambda)).$$
Here we have chosen $r$ so large that $\lambda + \rho \in X_{p^r}$ (so that $\bar L_p(w_0 \cdot \lambda) = \bar L_p(w_0 \lambda + p^r \rho)  \otimes \bar L_{p^r}(-\rho)^{(r)}$, cf. Corollary \ref{antidominant simple}). 
\end{proof}

In $\mathcal O_q$ we use the $(\ell,p^r,\nabla)$-filtrations of dual Verma modules to obtain the following analogous result.
\begin{thm} Let $ \lambda, \mu \in X$ and suppose $\lambda + \rho \in X^+$. Then
$$  ( T^\infty_q(\lambda) :  \Delta_q(\mu)) = (\Delta_q(\mu): L_q(w_0\cdot \lambda)),$$
where the right hand side denotes the multiplicity of $L_q(w_0 \cdot \lambda)$ in an $(\ell,p^r,\nabla)$-filtration of $\nabla_q(\mu)$, with $r$ chosen so large that $w_0\cdot \lambda + \ell p^r \rho \in X_{\ell p^r}$.
\end{thm}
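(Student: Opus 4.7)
The plan is to follow the proof of Theorem \ref{reciprocity_p} step by step, replacing the $(p^r,\nabla)$-filtration argument by its $(\ell,p^r,\nabla)$-analogue in $\mathcal O_q$. First, Lemma \ref{delta-nabla} holds verbatim in $\mathcal O_q$ by the same standard argument (the universal property of Verma modules and the dual property of $\nabla$-modules). Combining this with the finite $\Delta$-filtration of $T_q^\infty(\lambda)$ and the resulting long exact Hom/Ext sequences, the contributions telescope to give
\[
 (T_q^\infty(\lambda) : \Delta_q(\mu)) = \dim_K \Hom_{\mathcal O_q}(T_q^\infty(\lambda), \nabla_q(\mu)).
\]

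Next I would invoke the quantum analogue of Theorem \ref{proj cover} to identify $T_q^\infty(\lambda) \simeq P_q(w_0 \cdot \lambda)$, so the module in question is projective with unique simple head $L_q(w_0 \cdot \lambda)$. Applying $\Hom_{\mathcal O_q}(P_q(w_0\cdot\lambda), -)$ to an $(\ell, p^r, \nabla)$-filtration of $\nabla_q(\mu)$ — with $r$ chosen so large that $w_0 \cdot \lambda + \ell p^r \rho \in X_{\ell p^r}$ — projectivity yields
\[
 \dim_K \Hom_{\mathcal O_q}(P_q(w_0\cdot\lambda), \nabla_q(\mu)) = \sum_i \dim_K \Hom_{\mathcal O_q}(P_q(w_0\cdot\lambda), Q_i),
\]
where $Q_i = L_q(\nu_i) \otimes \bar L_p(\xi_i)^{[\ell]} \otimes (\bar \nabla_p(\eta_i)^{(r)})^{[\ell]}$ are the successive quotients, with $\nu_i \in X_\ell$, $\xi_i \in X_{p^r}$, $\eta_i \in X$.

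It remains to identify which $Q_i$ contribute. By Theorem \ref{STP_q}, the head of $Q_i$ is the simple module $L_q(\nu_i + \ell \xi_i + \ell p^r \eta_i)$. Since $P_q(w_0\cdot\lambda)$ has simple head $L_q(w_0\cdot\lambda)$, any non-zero map from it must surject onto this head, forcing $\nu_i + \ell \xi_i + \ell p^r \eta_i = w_0\cdot\lambda$. The unique $\ell p^r$-adic decomposition $w_0\cdot\lambda = (w_0\cdot\lambda + \ell p^r \rho) + \ell p^r(-\rho)$, with first summand in $X_{\ell p^r}$ by the choice of $r$, then pins down $\nu_i + \ell \xi_i = w_0\cdot\lambda + \ell p^r \rho$ and $\eta_i = -\rho$. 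By Theorem \ref{irr Verma} we have $\bar \nabla_p(-\rho) = \bar L_p(-\rho) = \bar \Delta_p(-\rho)$, so any such $Q_i$ is itself simple and isomorphic to $L_q(w_0\cdot\lambda)$, contributing $1$ to the sum. All other $Q_i$ contribute $0$. Hence the sum counts exactly the multiplicity of $L_q(w_0\cdot\lambda)$ as a quotient in the $(\ell,p^r,\nabla)$-filtration of $\nabla_q(\mu)$, establishing the identity.

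The subtle point I expect to be the main obstacle is the vanishing of $\Hom_{\mathcal O_q}(P_q(w_0\cdot\lambda), Q_i)$ when $Q_i$ is not isomorphic to $L_q(w_0\cdot\lambda)$: one must ensure that a deeper occurrence of $L_q(w_0\cdot\lambda)$ as a composition factor of a non-simple $Q_i$ cannot yield a non-zero map. The head-of-image argument resolves this, since the image of any non-zero map from a projective with simple head $L$ must have $L$ as its head, and Steinberg's tensor product theorem combined with the uniqueness of the $\ell p^r$-adic expansion forces this head to coincide with the head of $Q_i$. The choice of $r$ sufficiently large is precisely what makes the Steinberg factorisation rigid enough to read off $(\nu_i, \xi_i, \eta_i)$ from the head alone.
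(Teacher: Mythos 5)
Your overall route is the one the paper intends for this theorem: Lemma \ref{delta-nabla} converts $(T_q^\infty(\lambda):\Delta_q(\mu))$ into $\dim_K\Hom_{\mathcal O_q}(T_q^\infty(\lambda),\nabla_q(\mu))$, the identification $T_q^\infty(\lambda)\simeq P_q(w_0\cdot\lambda)$ makes this Hom-functor exact on an $(\ell,p^r,\nabla)$-filtration of $\nabla_q(\mu)$, and the remaining point is that only quotients isomorphic to $L_q(w_0\cdot\lambda)$ contribute, each contributing $1$. The gap is in how you justify that last step. You assert that, by Theorem \ref{STP_q}, the head of $Q_i=L_q(\nu_i)\otimes\bar L_p(\xi_i)^{[\ell]}\otimes(\bar\nabla_p(\eta_i)^{(r)})^{[\ell]}$ is $L_q(\nu_i+\ell\xi_i+\ell p^r\eta_i)$, and that the head of the image of a nonzero map from $P_q(w_0\cdot\lambda)$ must coincide with the head of $Q_i$. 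Both claims fail. These quotients are of $\nabla$-type: Steinberg's theorem computes tensor products of simples, not heads of $L_q(\nu)\otimes(\text{twisted dual Verma})$; in fact $\bar\nabla_p(\eta)$ typically has no head at all (dually to the observation in Section 5.2 that $\Hom(\bar L_p(n),\bar\Delta_p(-2))=0$ for all $n$), and then $Q_i$ has no simple quotient either. Moreover, the image of a map $P_q(w_0\cdot\lambda)\to Q_i$ is only a submodule of $Q_i$, and the head of a submodule need not agree with any head of the ambient module; since such a map need not be surjective, the absence (or ``wrongness'') of a head of $Q_i$ does not by itself make the Hom-space vanish. So the vanishing of $\Hom_{\mathcal O_q}(P_q(w_0\cdot\lambda),Q_i)$ for non-contributing $Q_i$ --- which you correctly single out as the crux --- is not actually proved by your argument.

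The needed statement is true, but should be argued via composition factors (equivalently via socles, using self-duality and injectivity of $P_q(w_0\cdot\lambda)$): a nonzero map from the projective cover has image with head $L_q(w_0\cdot\lambda)$, so $L_q(w_0\cdot\lambda)$ occurs as a subquotient of $Q_i$. Since $L_q(\nu_i)\otimes\bar L_p(\xi_i)^{[\ell]}$ is a finite-dimensional simple and tensoring with it is exact, every simple subquotient of $Q_i$ has the form $L_q(\nu_i+\ell\xi_i+\ell p^r\tau)$ with $\bar L_p(\tau)$ a composition factor of $\bar\nabla_p(\eta_i)$. Uniqueness of the $\ell p^r$-adic expansion (using $\nu_i+\ell\xi_i\in X_{\ell p^r}$ and the hypothesis $w_0\cdot\lambda+\ell p^r\rho\in X_{\ell p^r}$) gives $\nu_i+\ell\xi_i=w_0\cdot\lambda+\ell p^r\rho$ and $\tau=-\rho$; one then still needs the rigidity step that $\bar L_p(-\rho)=\bar\nabla_p(-\rho)$ occurs as a composition factor of $\bar\nabla_p(\eta)$ only when $\eta=-\rho$ --- for instance because $\bar\Delta_p(-\rho)$ is simple, projective and injective (Theorems \ref{irr Verma} and \ref{spec proj_p}), so any occurrence splits off as a direct summand, while $\bar\nabla_p(\eta)$ is indecomposable. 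Only then does $Q_i\simeq L_q(w_0\cdot\lambda)$ follow, and $\dim_K\Hom_{\mathcal O_q}(P_q(w_0\cdot\lambda),L_q(w_0\cdot\lambda))=1$ finishes the count. Note that your head-based shortcut also silently skipped this last point: you read off $\eta_i=-\rho$ directly, whereas the correct argument only pins down the composition factor $\tau$, not $\eta_i$, without the extra step.
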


These reciprocity laws imply the following equivalence.
\begin{cor} \label{last} The set of indecomposable tilting characters $\{\Char \bar T^\infty_p(\lambda) \mid \lambda + \rho \in X^+\}$, respectively $\{\Char T^\infty_q(\lambda) \mid \lambda + \rho \in X^+\}$, determines the set of irreducible characters in $\bar {\mathcal O}_p$, respectively $\mathcal O_q$, and vice versa.
\end{cor}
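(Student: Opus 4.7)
The argument goes via the common multiplicity matrix
\[
m_{\lambda,\mu} := (\bar T^\infty_p(\lambda):\bar \Delta_p(\mu)) = (\bar \Delta_p(\mu):\bar L_p(w_0\cdot\lambda))
\]
supplied by the reciprocity of Theorem~\ref{reciprocity_p}. The claim is that both the tilting character family and the irreducible character family are equivalent to $m$, and hence to each other; I treat the modular case, the quantum version being parallel.

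The Verma characters $\Char \bar \Delta_p(\mu) = q^- e^{\mu+\rho}$ have pairwise distinct maximal weights $\mu$, hence are $\Z$-linearly independent in the completed weight ring. Consequently, the finite expansion $\Char \bar T^\infty_p(\lambda) = \sum_\mu m_{\lambda,\mu}\Char \bar \Delta_p(\mu)$ encodes $m$ bijectively, so tilting characters and $m$ determine each other.

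For the equivalence between irreducible characters and $m$, the forward direction uses the $(p^r,\Delta)$-filtration of Proposition~\ref{p-filt}. Writing
\[
\Char \bar \Delta_p(\mu) = \sum_\nu a_{\mu,\nu}^{(r)}\,\Char \bar L_p(\nu^0)\cdot(q^-)^{(r)} e^{p^r(\nu^1+\rho)}
\]
with $a_{\mu,\nu}^{(r)} = (\bar \Delta_p(\mu):\bar L_p(\nu^0)\otimes \bar \Delta_p(\nu^1)^{(r)})$, the factor characters on the right are linearly independent because they have distinct maximal weights $\nu = \nu^0 + p^r\nu^1$. Given all simple characters these multiplicities are therefore extractable; for antidominant $\nu = w_0\cdot\lambda$ and $r$ large, Corollary~\ref{antidominant simple} collapses the corresponding factor to $\bar L_p(\nu)$, yielding $a_{\mu,\nu}^{(r)} = m_{\lambda,\mu}$. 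Conversely, given $m$, I use the identity $a_{\mu,\nu}^{(r)} = [\tilde \Delta_r(\mu):\tilde L_{p^r}(\nu)]$ (the modular analog of the corollary following Proposition~\ref{p-filt}) together with the observation that as $\nu$ ranges over antidominant weights with $r$ large, $\nu^0 = \nu + p^r\rho$ ranges over all of $X_{p^r}$; the modular analog of the periodicity~(\ref{period}) then extends the antidominant slice of $m$ to all baby-simple multiplicities $[\tilde \Delta_r(\mu):\tilde L_{p^r}(\eta)]$ for $\eta \in X$. Inverting this unitriangular matrix supplies every $\Char \tilde L_{p^r}(\lambda^0)$ for $\lambda^0 \in X_{p^r}$, and by the modular Curtis theorem these agree with the restrictions of $\Char \bar L_p(\lambda^0)$; finally Corollary~\ref{first appl} combined with Theorem~\ref{STP_p} propagates to every irreducible character in $\bar {\mathcal O}_p$.

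The main obstacle is the reverse direction: verifying that $m$ (which at first glance captures only the multiplicities of \emph{antidominant} simples in Verma filtrations) in fact contains enough information to determine all $p^r$-restricted simple characters. The key bridge is that antidominant weights shifted by $p^r\rho$ fill all of $X_{p^r}$, and Steinberg-style periodicity of baby simples extends the antidominant data to the full family of finite-dimensional simple characters; once those are known, Steinberg's tensor product theorem handles the remaining weights.
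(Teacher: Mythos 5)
Your proposal is correct and follows essentially the same route as the paper: reciprocity converts tilting multiplicities into $(p^r,\Delta)$-filtration multiplicities, Corollary~4.3 turns those into composition multiplicities in baby Verma modules, and $p^r$-periodicity lifts the antidominant slice to all baby simples, from which the irreducible characters in $\bar{\mathcal O}_p$ follow via Corollary~3.11 and Steinberg's tensor product theorem. The paper's proof is considerably terser -- it dismisses the reverse implication with a single sentence and does not dwell on the linear independence/unitriangularity arguments you spell out -- but the underlying ideas and the chain of reductions are the same.
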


\begin{proof} Combining Theorem \ref{reciprocity_p} combined with Corollary 4.3 we get 
\begin{equation} \label{last_p} (\bar T^\infty_p(\lambda) : \bar  \Delta_p(\mu)) = [\tilde \Delta_{r}(\mu): \tilde L_{p^r}(w_0\cdot \lambda)].
\end{equation}
for all $\lambda + \rho \in X^+$ and $\mu \in X$. Because of the periodici
ty 
$$ [\tilde \Delta_{r}(\nu): \tilde L_{p^r}(\eta))] =  [\tilde \Delta_{r}(\nu + p^r\rho): \tilde L_{p^r}(\eta + p^r \rho)],$$
valid  for all $\nu, \eta \in X$ we can always determine the multiplicity of a given simple $\bar u_{r} \bar B_K$-module in a baby Verma module by passing to a case where the simple module has antidominant highest weight.  Hence  equation (\refeq{last_p}) means that 
the characters $\{\Char \bar T^\infty_p(\lambda) \mid \lambda \in X^-\}$ determines the composition factors of all baby Verma modules. This is equivalent to the determination of all simple $\bar u_{r}\bar B_K$-modules. In turn these determine all irreducible characters in $\bar {\mathcal O}_p$, see Corollary \ref{irr in Oq} The reverse implication works as well.

The proof in the quantum case is analogous.
\end{proof}

\begin{rem} If $p \geq 2h-2$ (or whenever Donkin's conjecture holds)  we can in Corollary \ref{last} replace the $\infty$-tilting modules with the finite dimensional tilting modules with highest weights in $(p^r-1)\rho + X^+$, respectively $(\ell p^r - 1)\rho) + X^+$, see Theorems 7.7(2) and 7.9(2). This result should be compared to \cite{So} where it is proved (without any condition on $p$) that there exists $r>1$ such that the finite dimensional indecomposable  tilting modules with highest weights in $(p^r-1)\rho + X_p$ determine all (finite dimensional) simple modules.
\end{rem}

\vskip 1 cm

\vskip 1 cm
\end{document}